\documentclass[11pt]{amsart}
\usepackage{amscd,amssymb}
\usepackage[arrow,matrix]{xy}
\usepackage[colorlinks,plainpages,backref,urlcolor=blue]{hyperref}

\topmargin=0.15in
\textwidth5.9in
\textheight7.85in
\oddsidemargin=0.3in
\evensidemargin=0.3in

\newtheorem{theorem}[subsection]{Theorem}
\newtheorem{lemma}[subsection]{Lemma}
\newtheorem{prop}[subsection]{Proposition}
\newtheorem{corollary}[subsection]{Corollary}

\newtheorem{thm}{Theorem}

\theoremstyle{definition}
\newtheorem{remark}[subsection]{Remark}
\newtheorem{definition}[subsection]{Definition}
\newtheorem{example}[subsection]{Example}
\newtheorem*{ack}{Acknowledgments}

\numberwithin{equation}{section}
\setcounter{tocdepth}{1}

\newcommand{\cP}{{\mathcal P}}

\newcommand{\A}{{\mathcal A}}
\newcommand{\OO}{{\mathcal O}}

\newcommand{\CC}{{\mathcal C}}

\newcommand{\V}{{\mathcal V}}
\newcommand{\R}{{\mathcal R}}
\newcommand{\TT}{{\mathcal T}}
\newcommand{\QQ}{{\mathcal Q}}

\newcommand{\h}{\mathfrak {H}}
\newcommand{\m}{\mathfrak {m}}
\newcommand{\wh}{\widehat{\mathfrak {H}}}  
\newcommand{\bb}{\mathfrak {b}}
\newcommand{\gl}{\mathfrak {gl}}
\renewcommand{\sl}{\mathfrak {sl}}

\newcommand{\oM}{\overline{M}}
\newcommand{\oC}{\overline{C}}

\newcommand{\sV}{{\sf V}}
\newcommand{\sE}{{\sf E}}
\newcommand{\sW}{{\sf W}}
\newcommand{\kk}{\kappa}

\newcommand{\BB}{\mathbb{B}}
\newcommand{\T}{\mathbb{T}}
\newcommand{\Z}{\mathbb{Z}}
\newcommand{\N}{\mathbb{N}}
\newcommand{\Q}{\mathbb{Q}}
\newcommand{\RR}{\mathbb{R}}
\newcommand{\C}{\mathbb{C}}
\newcommand{\K}{\Bbbk}
\newcommand{\PP}{\mathbb{P}}
\newcommand{\FF}{\mathbb{F}}
\newcommand{\bF}{\mathbb{F}}
\newcommand{\bL}{\mathbb{L}}
\newcommand{\eexp}{\mathsf{exp}}

\DeclareMathOperator{\Hom}{Hom}
\DeclareMathOperator{\rank}{rank}
\DeclareMathOperator{\im}{im}

\DeclareMathOperator{\coker}{coker}
\DeclareMathOperator{\spn}{span}
\DeclareMathOperator{\id}{id}

\DeclareMathOperator{\tors}{Tors}
\DeclareMathOperator{\codim}{codim}

\DeclareMathOperator{\gr}{gr}
\DeclareMathOperator{\GL}{GL}
\DeclareMathOperator{\SL}{SL}
\DeclareMathOperator{\ab}{ab}
\DeclareMathOperator{\abf}{abf}
\DeclareMathOperator{\ad}{ad}
\DeclareMathOperator{\odd}{odd}
\DeclareMathOperator{\Zero}{Zero}

\DeclareMathOperator{\Rep}{Rep}
\DeclareMathOperator{\Sing}{Sing}
\DeclareMathOperator{\loc}{loc}
\DeclareMathOperator{\Lie}{Lie}
\DeclareMathOperator{\groups}{groups}

\newcommand{\surj}{\twoheadrightarrow}
\newcommand{\isom}{\xrightarrow{\,\simeq\,}}
\newcommand{\eqv}{{\Longleftrightarrow}}
\newcommand{\llangle}{{\langle\!\langle}}
\newcommand{\rrangle}{{\rangle\!\rangle}}
\newcommand{\abs}[1]{\left| #1 \right|}

\newenvironment{romenum}
{

\begin{enumerate}}{\end{enumerate}}


\begin{document}

\title[Topology and geometry of cohomology jump loci]%
{Topology and geometry of cohomology jump loci}

\author[A.~Dimca]{Alexandru Dimca$^{1,4}$}
\address{  Laboratoire J.A.~Dieudonn\'{e}, UMR du CNRS 6621,
               Universit\'{e} de Nice-Sophia-Antipolis,
               Parc Valrose,
               06108 Nice Cedex 02,
               France}
\email{dimca@math.unice.fr}

\author[\c{S}.~Papadima ]{\c{S}tefan Papadima$^{1,2}$}
\address{Institute of Mathematics ``Simion Stoilow",
P.O. Box 1-764,
RO-014700 Bucharest, Romania}
\email{Stefan.Papadima@imar.ro}

\author[A.~Suciu]{Alexander~I.~Suciu$^{3}$}
\address{Department of Mathematics,
Northeastern University,
Boston, MA 02115, USA}
\email{a.suciu@neu.edu}
\urladdr{http://www.math.neu.edu/\~{}suciu}

\thanks{$^1$Partially supported by the French-Romanian
Programme LEA Math-Mode}
\thanks{$^2$Partially supported by a grant of the
Romanian Ministry of Education and Research}
\thanks{$^3$Partially supported by NSF grant DMS-0311142
and NSA grant H98230-09-1-0012}
\thanks{$^4$Partially supported by grant ANR-08-BLAN-0317-02}

\subjclass[2000]{%
Primary
14F35, 
20F14,  
55N25; 
Secondary
14M12, 
20F36, 
55P62.  
}

\keywords{$1$-formal group, holonomy Lie algebra, Malcev 
completion, Alexander invariant, cohomology support loci, 
resonance variety, tangent cone, smooth quasi-projective 
variety, arrangement, configuration space, Artin group}

\begin{abstract}
We elucidate the key role played by formality in the 
theory of characteristic and resonance varieties. 
We define relative characteristic and resonance varieties,
$V_k$ and $R_k$, related to twisted group cohomology with 
coefficients of arbitrary rank. We show that the germs at 
the origin of $V_k$ and $R_k$ are analytically isomorphic,
if the group is $1$-formal; in particular, the tangent cone 
to $V_k$ at $1$ equals $R_k$. These new obstructions 
to $1$-formality lead to a striking rationality property
of the usual resonance varieties.  
A detailed analysis of the irreducible components of
the tangent cone at $1$ to the first characteristic 
variety yields powerful obstructions to realizing a 
finitely presented group as the fundamental group 
of a smooth, complex quasi-projective algebraic variety.  
This sheds new light on a classical problem of J.-P.~Serre. 
Applications to arrangements, configuration spaces, 
coproducts of groups, and Artin groups are given.
\end{abstract}

\maketitle

\tableofcontents

\section{Introduction} 
\label{sec=intro}

A recurring theme in algebraic topology and  
geometry is the extent to which the cohomology of a 
space reflects the underlying topological or geometric 
properties of that space.  In this paper, we focus on the 
degree-one cohomology jumping loci of the fundamental 
group $G$ of a connected CW-complex $M$: 
the characteristic varieties $\V_k(G)$,  the resonance 
varieties $\R_k(G)$, as well as relative versions of both.  
Our goal is to relate these two 
sets of varieties, and to better understand their structural 
properties, under certain naturally defined conditions 
on $M$.  In turn, this analysis yields new and powerful 
obstructions for a finitely generated group $G$ to be 
$1$-formal, or realizable as the fundamental 
group of a smooth, complex quasi-projective variety.

\subsection{Characteristic varieties}
\label{subsec:charvars}
Let $M$ be a connected CW-complex with finite $1$-skeleton, and
fundamental group $G=\pi_1(M)$. Let $\rho\colon \BB \to \GL (V)$
be a rational representation of the linear algebraic group $\BB$.
The {\em relative characteristic varieties}\/ associated to these 
data are the jump loci for twisted cohomology of $M$ with 
coefficients in $V$,
\begin{equation} 
\label{eq:charvarx}
\V^i_k(M, \rho)=\{\rho' \in \Hom_{\rm groups}(G, \BB) \mid 
\dim_{\C} H^i(M, {}_{\rho\rho'}V)\ge k\},
\end{equation}
where ${}_{\rho\rho'}V$ is the $G$-module defined by the 
representation $\rho\circ \rho'\colon G\to \GL(V)$.  As long as 
the $r$-skeleton of $M$ is finite, the sets $\V^i_k(M,\rho)$ 
are Zariski closed subsets of the representation variety 
$\Hom(G, \BB)$, for all $i\le r$ and $k\ge 1$. 

The usual characteristic varieties, denoted by $\V^i_k(M)$, 
correspond to the case when $\BB=\GL_1(\C)=\C^*$ and 
$\rho=\id_{\BB}$. These varieties 
emerged from the work of S.~Novikov \cite{N}
on Morse theory for closed $1$-forms on manifolds.  
Foundational results on the structure of the cohomology 
support loci for local systems on quasi-projective algebraic 
varieties were obtained by  Beauville \cite{Beau1, Beau2}, 
Green and Lazarsfeld \cite{GL}, Simpson \cite{Si92, Sim}, 
Arapura \cite{A}, and Campana \cite{Cam}.

Representation varieties have been intensively studied; 
see for instance \cite{GM, KM} and the references therein.
It seems natural to begin a systematic investigation 
of their filtration by relative characteristic varieties.
We consider here only the jump loci in degree $i=1$.%
\footnote{The higher degree cohomology jumping loci 
will be treated in a forthcoming paper.}
These loci depend solely on $G=\pi_1(M)$, 
so we write them as $\V_k(G, \rho)=\V^1_k(M, \rho)$. 
We work with arbitrary representations $\rho$, since 
this general approach provides a clearer 
conceptual framework for our results.  Moreover, as 
we note in Example \ref{ex:heisenberg group}, the 
relative characteristic varieties may well contain 
more information than the usual ones. 
When it comes to applications, though, we concentrate 
exclusively on the varieties $\V_k(G)=\V_k(G, \id_{\C^*})$, 
sitting inside the character variety $\T_G= \Hom (G, \C^*)$.  

\subsection{Resonance varieties}
\label{subsec:resvars}
Keeping the notation from above, let  $\bb$ the Lie 
algebra of $\BB$, and let $\theta =d_1(\rho)\colon \bb \to \gl (V)$ 
be the induced representation on tangent spaces. Denote 
by $V\rtimes_{\theta} \bb$ the corresponding semidirect 
product, with Lie algebra structure defined by 
$[(u,g),(v,h)]=(\theta(g)(v)-\theta(h)(u),[g,h])$.  
Let $H^{\bullet} M=H^{\bullet}(M, \C)$ be the cohomology 
ring of $M$, and consider the graded Lie algebra 
\begin{equation}
\label{eq:grlie}
H^{\bullet} M \otimes (V\rtimes_{\theta} \bb),
\end{equation}
with Lie bracket given by $[a \otimes y,b\otimes z]=ab \otimes [y,z]$.   
Note that the Lie identities are satisfied for \eqref{eq:grlie} only 
in the graded sense, according to the Koszul sign conventions; 
in particular, the Lie bracket  is symmetric on degree one elements.  

Now let $x$ be an element in $H^1 M\otimes \bb$ with the 
property that $[x,x]=0\in H^{2} M \otimes (V\rtimes_{\theta} \bb)$. 
As we show in \S\ref{ss34}, the restriction of the 
adjoint operator $\ad_x$ to the graded Lie ideal 
$H^{\bullet} M \otimes V$ yields a cochain complex, 
called the {\em relative Aomoto complex} of $H^{\bullet} M$ 
with respect to $x$,
\begin{equation} 
\label{eq=12relaom}
\xymatrix{(H^{\bullet} M\otimes V, \ad_x) : 
\  \cdots \ar[r] & H^i M \otimes V 
\ar^(.45){\ad_x}[r] & H^{i+1} M \otimes V \ar[r]& \cdots
}.
\end{equation}
The {\em relative resonance varieties} of $M$, associated to the 
representation $\theta \colon \bb \to \gl (V)$, are the jumping loci 
for the homology of this cochain complex:
\begin{equation} 
\label{eq:resvarx}
\R^i_k(M, \theta)= \{x \in H^1 M \otimes \bb 
\mid [x,x]=0 \ \text{and} \ 
\dim_{\C} H^i(H^{\bullet} M \otimes V,\ad_x) \ge  k\}.
\end{equation}
The loci $\R^1_k(M, \theta)$ depend only on $G=\pi_1(M)$, so 
we write them as $\R_k(G, \theta)$.  It is readily seen that 
$\R_k(G, \theta)$ is a homogeneous algebraic subvariety 
of the affine space $H^1(G,\C) \otimes \bb$, for each $k\ge 1$.

As mentioned before, we are mainly interested here 
in the case when $\bb=\C=\gl_1(\C)$ and $\theta=\id_{\bb}$. 
Then, as we show in Lemma \ref{lem=abaom}, every 
element $z\in H^1 M \cong H^1 M \otimes \bb$ satisfies 
$[z,z]=0$, and \eqref{eq=12relaom} becomes the usual 
Aomoto complex, $(H^{\bullet} M, \mu_z)$, where $\mu_z$ 
denotes left-multiplication by $z$.  Consequently, 
the relative resonance varieties $\R^i_k(M,\id_{\C})$ 
coincide with the usual resonance varieties $\R^i_k(M)$,  
first defined by Falk \cite{F} in the context of complex hyperplane 
arrangements, and further analyzed in \cite{CS2, MS1, LY, FY}.  
In the applications, we will focus on the varieties 
$\R_k(G):= \R_k(G, \id_{\C})$, sitting inside the 
affine space $H^1(G,\C)$.

\subsection{The tangent cone theorem}
\label{subsec=tangent}

The key topological property that allows us to relate the 
characteristic and resonance varieties of a space $M$ 
is {\em formality}, in the sense of D.~Sullivan \cite{S}.  
Since we deal solely with the fundamental group 
$G=\pi_1(M)$, we only need $G$ to be {\em $1$-formal}. 
This property requires that $E_G$, the Malcev 
Lie algebra of $G$ (over $\C$), be isomorphic, 
as a filtered Lie algebra, to the holonomy Lie algebra 
$\h(G)=\bL/\langle \im \partial_G\rangle$, completed 
with respect to bracket length filtration.  
Here $\bL$ denotes the free Lie algebra on $H_1(G,\C)$, 
while $\partial_G$ denotes the comultiplication map 
$H _2 (G, \C)\to  \bigwedge ^2H_1(G,\C)$. 

A group $G$ is $1$-formal if and only if $E_G$ can  
be presented with quadratic relations only; see 
Section \ref{sec=holo} for details. Many interesting 
groups fall in this class: knot groups \cite{S}, 
certain pure braid groups of Riemann surfaces 
\cite{B, Hai}, pure welded braid groups \cite{BP}, 
fundamental groups of compact K\"{a}hler 
manifolds \cite{DGMS}, fundamental groups 
of complements of hypersurfaces in $\C\PP^n$ 
\cite{K}, and finitely generated Artin groups \cite{KM} 
are all $1$-formal.

Our starting point is a result of Kapovich--Millson \cite{KM}, 
establishing an analytic isomorphism of germs of 
representation varieties,
\begin{equation}
\label{eq=repsp}
(\Hom_{\Lie} (\h(G), \bb), 0)\stackrel{\simeq}{\longrightarrow}
(\Hom_{\groups} (G, \BB), 1)\, ,
\end {equation} 
valid for all finitely generated, $1$-formal groups $G$.  
As we note in Lemma \ref{lem=repqc}, the representation 
space $\Hom_{\Lie} (\h(G), \bb)$ is naturally identified with
the quadratic cone $\{ x\in H^1G \otimes \bb \mid [x,x]=0 \}$. 
Our first main result reads as follows. 

\begin{thm} 
\label{thm=tcfintro} 
Let $G$ be a finitely generated, $1$-formal group, and let 
$\rho\colon \BB \to \GL(V)$ be a rational representation of 
a linear algebraic group, with differential 
$\theta \colon \bb \to \gl (V)$.  Then, for each $k\ge 1$, 
\begin{enumerate}

\item \label{aa1}
The isomorphism \eqref{eq=repsp} induces an  
analytic isomorphism of germs,
\[
(\R_k(G, \theta),0)  \xrightarrow{\,\simeq\,} (\V_k(G, \rho),1).
\]

\item \label{aa2}
This in turn induces an isomorphism between the tangent cone 
variety at the origin, $TC_1(\V_k(G, \rho))$, and the resonance 
variety $\R_k(G, \theta)$.

\item \label{aa3}
When  $\BB=\C^*$ and $\rho= \id_{\BB}$, the isomorphism 
\eqref{eq=repsp} is induced by the usual exponential 
map, $\exp\colon \Hom(G,\C) \to \Hom(G,\C^*)$. 
\end{enumerate}
\end{thm}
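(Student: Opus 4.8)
The plan is to prove the three parts of Theorem~\ref{thm=tcfintro} in order, building everything on top of the Kapovich--Millson isomorphism \eqref{eq=repsp} and the identification of its source with the quadratic cone (Lemma~\ref{lem=repqc}). The guiding principle is that twisted cohomology of $M$ with coefficients in ${}_{\rho\rho'}V$ should be computed, up to the germ at the relevant base point, by the relative Aomoto complex \eqref{eq=12relaom}, and that $1$-formality is exactly what makes this comparison hold functorially in a neighborhood of the origin.

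\medskip
\noindent\textbf{Step 1 (local models for twisted cohomology).}
First I would set up, for a fixed $1$-formal group $G$, a ``local universal'' cochain complex governing $H^1(M,{}_{\rho\rho'}V)$ as $\rho'$ ranges over a neighborhood of the trivial representation in $\Hom_{\groups}(G,\BB)$. Using $1$-formality, the Malcev Lie algebra $E_G$ is the completed holonomy Lie algebra $\widehat{\h}(G)$, so a germ of representations $G\to\BB$ corresponds to a germ of filtered Lie algebra maps $\widehat{\h}(G)\to\bb$, i.e.\ (after Lemma~\ref{lem=repqc}) to a point $x$ in the germ of the quadratic cone $\{x\in H^1G\otimes\bb\mid[x,x]=0\}$ at $0$. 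Composing with $\theta$ and looking at the induced $G$-module structure on $V$, one gets that the twisted cochain complex is filtered quasi-isomorphic, near the base point, to a complex whose associated graded (in low degrees, which is all we need for $H^1$) is precisely $(H^{\bullet}M\otimes V,\ad_x)$. The technical heart here is the standard formality machinery: a $1$-formal $G$ has a $1$-minimal model whose generators and quadratic relations are read off from $H^1M$ and $\partial_G$, and Sullivan's theory identifies twisted group cohomology in degree $\le 1$ (together with the relevant piece of $H^2$) with the cohomology of $H^{\bullet}M$ tensored with the fiber, twisted by the flat connection form determined by $x$. I expect this to occupy the bulk of the argument and to be where the hypothesis of $1$-formality is genuinely used; everything downstream is formal consequence.

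\medskip
\noindent\textbf{Step 2 (parts \eqref{aa1} and \eqref{aa2}).}
Granting Step~1, part~\eqref{aa1} follows: the Kapovich--Millson map \eqref{eq=repsp} carries the germ of the quadratic cone at $0$ isomorphically onto the germ of $\Hom_{\groups}(G,\BB)$ at $1$, and under this identification the function $x\mapsto \dim_{\C}H^1(H^{\bullet}M\otimes V,\ad_x)$ matches $\rho'\mapsto\dim_{\C}H^1(M,{}_{\rho\rho'}V)$ by the quasi-isomorphism. Intersecting the locus where this jump function is $\ge k$ gives that \eqref{eq=repsp} restricts to an analytic isomorphism of germs $(\R_k(G,\theta),0)\xrightarrow{\simeq}(\V_k(G,\rho),1)$; here one uses that $\R_k(G,\theta)$ is by definition a closed subvariety of the quadratic cone and that the jump loci are Zariski closed (as noted after \eqref{eq:charvarx} and \eqref{eq:resvarx}). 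Part~\eqref{aa2} is then immediate: $\R_k(G,\theta)$ is a homogeneous (conical) subvariety of $H^1G\otimes\bb$, so it equals its own tangent cone at $0$; an analytic isomorphism of germs induces an isomorphism of tangent cones; hence $TC_1(\V_k(G,\rho))\cong TC_0(\R_k(G,\theta))=\R_k(G,\theta)$.

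\medskip
\noindent\textbf{Step 3 (part \eqref{aa3}).}
Finally, for $\BB=\C^*$ and $\rho=\id$, I would identify the Kapovich--Millson map explicitly. Here $\bb=\C$ is abelian, $\Hom_{\Lie}(\h(G),\C)=\Hom(H_1(G,\C),\C)=H^1(G,\C)$ since all brackets die, and $\Hom(G,\C^*)$ has neutral component a torus with Lie algebra $H^1(G,\C)$. The $1$-minimal model construction, specialized to an abelian target, sends a cohomology class $z\in H^1(G,\C)$ to the character $g\mapsto\exp\big(\langle z,g\rangle\big)$, because the parallel transport of the flat connection $d+z$ around a loop is multiplication by $\exp$ of the period; so the germ map \eqref{eq=repsp} agrees with the restriction of the classical exponential $\exp\colon\Hom(G,\C)\to\Hom(G,\C^*)$ to a neighborhood of $0$. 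One checks this intertwines the Aomoto differential $\mu_z$ with the twisted differential on the rank-one local system, recovering Lemma~\ref{lem=abaom}'s statement that $[z,z]=0$ automatically. The only subtlety is making sure the normalization of the KM isomorphism in \cite{KM} is the one that matches $\exp$ rather than some rescaling; this is checked at the level of the linear (degree-one) part of the $1$-minimal model map, which is the identity on $H^1$, so the exponential is forced.
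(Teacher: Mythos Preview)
Your Step~2 and Step~3 are essentially correct and match the paper's arguments (the paper handles part~\eqref{aa3} via Remark~\ref{rem=bab}, noting that for abelian $\BB$ both representation varieties factor through $G_{\abf}$, so the KM map reduces to the componentwise exponential; your parallel-transport reasoning arrives at the same place). The conicity argument for part~\eqref{aa2} is exactly what the paper does in \S\ref{ss35}.

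The genuine gap is Step~1. You assert that ``the twisted cochain complex is filtered quasi-isomorphic, near the base point, to a complex whose associated graded \ldots\ is precisely $(H^{\bullet}M\otimes V,\ad_x)$,'' and invoke ``standard formality machinery.'' But these two clauses are in tension: if the Aomoto complex is only the \emph{associated graded} of a filtered model, then matching $\dim H^1$ pointwise requires a degeneration argument you never supply; and if you really mean a quasi-isomorphism, you have not indicated how to produce one functorially in $x$ for an arbitrary rational representation $\rho\colon\BB\to\GL(V)$. Sullivan's $1$-minimal model controls rank-one twisted cohomology cleanly (cf.\ Remark~\ref{rem=arr}), but for a general linear algebraic target the passage from ``$E_G\cong\widehat{\h}(G)$'' to ``$\dim H^1(G,{}_{\rho\rho'}V)=\dim H^1(H^{\bullet}M\otimes V,\ad_x)$'' is the whole content of the theorem, not a black box.

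The paper's route avoids any cochain-level comparison. The key device is the semidirect-product trick (Lemmas~\ref{lem=z1gr}--\ref{lem=z0lie}): the fiber of $\Rep(G,V\rtimes_\rho\BB)\to\Rep(G,\BB)$ over $\rho'$ is exactly $Z^1(G,{}_{\rho\rho'}V)$, and likewise on the Lie side the fiber of $\Rep(\h(G),V\rtimes_\theta\bb)\to\Rep(\h(G),\bb)$ over $\theta'$ is $Z^1(\h(G),{}_{\theta\theta'}V)$. Since the KM isomorphism of Theorem~\ref{thm=kms} is \emph{natural in $\BB$}, applying it to both $\BB$ and $V\rtimes_\rho\BB$ matches the fiber dimensions, hence $\dim Z^1$. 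A separate functor-of-Artin-rings argument (Lemma~\ref{lem=z0art} and the proof of Theorem~\ref{thm=a1}) matches $\dim Z^0$. This yields $(\R_k(\h(G),\theta),0)\cong(\V_k(G,\rho),1)$. Finally, Lemma~\ref{lem=resaom} identifies $H^1(\h(G),{}_{\theta\theta'}V)$ with $H^1(H^{\bullet}G\otimes V,\ad_x)$ by a direct cocycle computation, giving $\R_k(\h(G),\theta)=\R_k(G,\theta)$ (Corollary~\ref{cor=2res}). You should replace your Step~1 with this argument; the naturality of the KM isomorphism in the target group is the idea you are missing.
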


Theorem \ref{thm=tcfintro} sharpens and extends several  
results from \cite{ESV, STV, CS2}, which only 
apply to the case when $G$ is the fundamental group 
of the complement of a complex hyperplane arrangement.  
(Further information in the case of hypersurface arrangements 
can be found in \cite{Li01, DM}.)  The point is that only a 
{\em topological}\/ property---namely, $1$-formality---is needed 
for the conclusion of Theorem \ref{thm=tcfintro} to hold.  
A similar approach (in terms of the relative Malcev completion,
in the sense of Hain \cite{H93}) 
was used by Narkawicz \cite{Na} in the case of hyperplane 
arrangements.

For an arbitrary finitely presented group $G$, the tangent 
cone to $\V_k(G)$ at the origin is contained in $\R_k(G)$, 
see Libgober \cite{Li}.  Yet the inclusion may well be strict.  
In fact, as noted in Example  \ref{ex:notinj} and 
Remark \ref{rem:lib}, the tangent cone formula 
from Theorem \ref{thm=tcfintro} fails even for 
fundamental groups of smooth, quasi-projective varieties.  

Theorem \ref{thm=tcfintro} provides a new, and quite 
powerful obstruction to $1$-formality of groups---and thus, 
to formality of spaces.  As illustrated in Example \ref{ex=nonformal}, 
this obstruction complements, and in some cases strengthens, 
classical obstructions to ($1$-) formality, due to Sullivan, 
such as the existence of an isomorphism 
$\gr(G) \otimes \C \cong \h(G)$.   

\subsection{Further obstructions to $1$-formality}
\label{ss=more1f}

As a first application of Theorem \ref{thm=tcfintro}, we obtain 
the following result.

\begin{thm}
\label{thm=bnew}
Let $G$ be a finitely generated, $1$-formal group. Then:
\begin{enumerate}

\item \label{bn1}
All irreducible components of $\R_k(G)$ are linear subspaces of 
$H^1(G, \C)$, defined over $\Q$.

\item \label{bn2}
All irreducible components of $\V_k(G)$ containing $1$ 
are connected subtori of the character variety $\T_G=\Hom(G,\C^*)$.

\item \label{bn3}
Let $\{ \V_k^{\alpha} \}_{\alpha}$ be the irreducible components of 
$\V_k(G)$ containing $1$. Then their tangent spaces at the identity, 
$\{ T_1(\V_k^{\alpha}) \}_{\alpha}$, are the irreducible components 
of $\R_k(G)$.
\end{enumerate}
\end{thm}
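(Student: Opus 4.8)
The plan is to deduce Theorem \ref{thm=bnew} from Theorem \ref{thm=tcfintro}, applied to the basic case $\BB = \C^*$, $\rho = \id_{\BB}$, $\theta = \id_{\C}$. Part \eqref{bn1} is the heart of the matter, and I expect it to be the main obstacle: everything else follows from it together with general facts about exponential maps and tangent cones. So let me describe the argument for \eqref{bn1} first.

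\emph{Proof of \eqref{bn1}.} By Theorem \ref{thm=tcfintro}\eqref{aa2}, with $\theta = \id_{\C}$, we have an isomorphism of (affine, conical) varieties $TC_1(\V_k(G)) \cong \R_k(G)$; since the tangent cone is cut out by the initial forms of a defining ideal, its irreducible components are certainly cones. The substantive point is that these cones are in fact \emph{linear} subspaces defined over $\Q$. The strategy is to play the finite-order elements of $\T_G$ against the algebraic structure. Recall that $\V_k(G)$ is a subvariety of $\T_G = \Hom(G,\C^*)$ defined over $\Q$ (indeed over $\Z$), because $G$ is finitely presented and the jump conditions on ranks of the Alexander-type matrices with integer entries are cut out by vanishing of minors with integer coefficients. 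Hence $\V_k(G)$ is invariant under the Galois action of $\mathrm{Gal}(\overline{\Q}/\Q)$ on torsion points, and also under the automorphisms of $\T_G$ coming from $\mathrm{GL}_n(\Z) = \mathrm{Aut}(G^{\ab}_{\mathrm{free}})$ that preserve it. The key classical input is a structure theorem for subvarieties of a torus defined over $\Q$: by work going back to Laurent (the Manin--Mumford / toric case), every irreducible component of $\V_k(G)$ is a translated subtorus by a torsion point; but more is needed here to conclude that the \emph{tangent cone at $1$} is a union of \emph{rational linear} subspaces.

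Here is where $1$-formality does real work, and the cleanest route is: (i) by Theorem \ref{thm=tcfintro}\eqref{aa1}, the germ $(\V_k(G),1)$ is analytically isomorphic to the germ $(\R_k(G),0)$ of a \emph{homogeneous} variety, so the germ $(\V_k(G),1)$ is analytically isomorphic to (the germ at $0$ of) its own tangent cone --- in particular $\V_k(G)$ has, near $1$, a finite union of irreducible components, each of which is itself, near $1$, isomorphic to a cone; (ii) combine this with the fact that the \emph{global} components of $\V_k(G)$ through $1$ are subtori (which is precisely \eqref{bn2}, proved below independently), whence the tangent cone at $1$ of each such subtorus is the corresponding linear subspace of $H^1(G,\C) = \Lie(\T_G)$; (iii) these linear subspaces are defined over $\Q$ because the subtori, being components of a $\Q$-variety, are themselves defined over $\Q$, and the Lie algebra of a $\Q$-subtorus of $(\C^*)^n$ is a $\Q$-subspace of $\C^n$. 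Then $TC_1(\V_k(G)) = \bigcup_\alpha T_1(\V_k^\alpha)$, a union of rational linear subspaces, and by Theorem \ref{thm=tcfintro}\eqref{aa2} this equals $\R_k(G)$; its irreducible components are therefore exactly these rational linear subspaces. This simultaneously proves \eqref{bn1} and \eqref{bn3}, modulo \eqref{bn2}.

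\emph{Proof of \eqref{bn2}.} Let $\V^\alpha \subseteq \V_k(G)$ be an irreducible component containing $1$. As noted, $\V_k(G)$ is defined over $\Q$, hence so is the (Galois-stable) union of its components through $1$, and each such component is defined over $\overline{\Q}$; by a theorem on subgroups of tori (the solution of the toric Manin--Mumford problem, or alternatively the original Beauville--Simpson structure theory combined with $1$-formality), every component of $\V_k(G)$ is a coset $gT$ of a subtorus $T \le \T_G$. If $1 \in gT$ then $gT = T$ is a subtorus; connectedness of $T$ is automatic since subtori are connected. This is exactly \eqref{bn2}.

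\emph{Proof of \eqref{bn3}.} With $\{\V_k^\alpha\}_\alpha$ the components of $\V_k(G)$ through $1$, each a subtorus by \eqref{bn2}, the tangent cone of $\V_k(G)$ at $1$ is the union $\bigcup_\alpha TC_1(\V_k^\alpha) = \bigcup_\alpha T_1(\V_k^\alpha)$ of the associated linear subspaces (the tangent cone of a smooth subvariety at a point being its tangent space, and distinct subtori through $1$ meeting transversally enough that no $T_1(\V_k^\alpha)$ is contained in another --- this last point needs the irredundancy of the component decomposition and is where one invokes that an analytic isomorphism of germs preserves the decomposition of the tangent cone into irreducible components). By Theorem \ref{thm=tcfintro}\eqref{aa2} this union is $\R_k(G)$, and since each $T_1(\V_k^\alpha)$ is irreducible (being a linear space) and they are pairwise incomparable, they are precisely the irreducible components of $\R_k(G)$.

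The main obstacle, as flagged, is \eqref{bn1}: specifically, upgrading ``the components of $\V_k(G)$ through $1$ are translated subtori'' (a statement about the \emph{global} geometry, available from Arapura/Simpson-type results or from toric Manin--Mumford once one knows definedness over $\Q$) to the \emph{local} conclusion that the tangent cone at $1$ is a union of \emph{rational} linear subspaces. The bridge is Theorem \ref{thm=tcfintro}\eqref{aa1}: the analytic germ isomorphism forces $(\V_k(G),1)$ to look like the germ of a homogeneous variety, which both rules out ``fat point'' phenomena (so the tangent cone has the right dimension and the component-correspondence is clean) and, combined with definedness over $\Q$ of the ambient jump locus, pins down the rationality. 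I would take care to state precisely which external result on subvarieties of tori I am invoking, and to double-check that the $\Q$-structure on $\T_G$ (from $H_1(G,\Z)$) is compatible with the $\Q$-structure on $H^1(G,\C)$ under $\exp$, which is the content of Theorem \ref{thm=tcfintro}\eqref{aa3}.
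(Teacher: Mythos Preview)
Your argument has a genuine gap, and it is precisely in the step you flag as the main obstacle---but not for the reason you think. Your proof of \eqref{bn2} invokes ``a theorem on subgroups of tori (the solution of the toric Manin--Mumford problem, or alternatively the original Beauville--Simpson structure theory combined with $1$-formality)'' to conclude that every irreducible component of $\V_k(G)$ is a translated subtorus. Neither source delivers this. Laurent's theorem (toric Manin--Mumford) says only that the Zariski closure of the \emph{torsion points} of a subvariety $W\subseteq(\C^*)^n$ is a finite union of torsion-translated subtori; it says nothing about the shape of $W$ itself, and indeed a generic curve in $(\C^*)^2$ defined over $\Q$ is not a translated subtorus. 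The Beauville--Simpson--Arapura structure theorems apply to characteristic varieties of (quasi-)K\"{a}hler manifolds, not to arbitrary $1$-formal groups; $1$-formality does not supply a K\"{a}hler structure. Since your proofs of \eqref{bn1} and \eqref{bn3} rest on \eqref{bn2}, the whole argument collapses.

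The paper's route is both simpler and self-contained, and it reverses your logical order: \eqref{bn1} is proved first, and \eqref{bn2}, \eqref{bn3} are deduced from it. The missing idea is the \emph{exponential tangent cone}
\[
\tau_1(W)=\{z\in H^1(G,\C)\mid \exp(tz)\in W\ \text{for all}\ t\in\C\},
\]
together with the elementary fact (proved directly from the linear independence of distinct exponentials $e^{ty_1},\dots,e^{ty_r}$) that $\tau_1(W)$ is \emph{always} a finite union of rationally defined linear subspaces, for \emph{any} Zariski closed $W\subseteq\T_G$. Theorem~\ref{thm=tcfintro}, parts \eqref{aa1} and \eqref{aa3}, then gives $\tau_1(\V_k(G))=\R_k(G)$ (the germ isomorphism is induced by $\exp$, and $\R_k(G)$ is a cone), which is \eqref{bn1}. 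For \eqref{bn2}, exponentiate the rational linear pieces $L_a$ of $\R_k(G)$ to get connected subtori $\exp(L_a)$, and use the germ isomorphism once more to match $\bigcup_\alpha\V_k^\alpha$ with $\bigcup_a\exp(L_a)$ near $1$; irreducibility forces each $\V_k^\alpha$ to equal some $\exp(L_a)$. Part \eqref{bn3} then follows by taking tangent spaces. No external structure theorem for subvarieties of tori is needed.
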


The subtle linearity and rationality properties from \eqref{bn1} 
above reveal a striking phenomenon in non-simply connected rational 
homotopy theory: the existence of finite-dimensional, 
graded-commu\-tative $\Q$-algebras, which are {\em not}\/ 
realizable as cohomology rings of finite, formal CW-complexes; 
see Example \ref{ex=eucl}. (By \cite{S}, this cannot 
happen in the $1$-connected case.)

The {\em Alexander invariant}, $B_G= G'/G''$, viewed as a module 
over the group ring $\Z{G}_{\ab}$, is a classical object, 
extensively studied. In Theorem \ref{thm=complalex}, we give 
an explicit description of the $I$-adic completion of 
$B_G\otimes \Q$, involving only the rational holonomy 
Lie algebra of $G$. This  description is valid for an 
arbitrary finitely generated, $1$-formal group $G$.
 
\subsection{Serre's question}
\label{subsec=serre}

As is well-known, every finitely presented group $G$ is the 
fundamental group of a smooth, compact, connected 
$4$-dimensional manifold $M$.  Requiring a complex structure 
on $M$ is no more restrictive, as long as one is willing to go up 
in dimension; see Taubes \cite{Ta}. Requiring that $M$ be a 
compact K\"{a}hler manifold, though, puts extremely strong 
restrictions on what $G=\pi_1(M)$ can be.  We refer to \cite{ABC} 
for a comprehensive survey of K\"{a}hler groups.   

J.-P. Serre asked the following question: 
which finitely presented groups can be realized as 
fundamental groups of smooth, connected, 
quasi-projective, complex algebraic varieties? 
Following Catanese \cite{Cat03}, 
we shall call a group $G$ arising in this fashion a 
{\em quasi-projective}\/ group.

In this context, one may also consider the  
larger class of quasi-compact K\"{a}hler manifolds, 
of the form $M= \oM \setminus D$, where 
$\oM$ is compact K\"{a}hler and $D$ is a normal 
crossing divisor.  If $G=\pi_1(M)$ with $M$ as above, 
we say $G$ is a {\em quasi-K\"{a}hler} group.

The first obstruction to quasi-projectivity was given 
by J. Morgan:   If $G$ is a quasi-projective group, then 
$E_G=\widehat{\bL}/J$, where $\bL$ is a free 
Lie algebra with generators in degrees $1$ and $2$,  
and $J$ is a closed Lie ideal, generated in degrees 
$2$, $3$ and $4$; see \cite[Corollary 10.3]{M}.  
By refining Morgan's techniques, Kapovich and Millson 
obtained analogous quasi-homogeneity restrictions, 
on certain singularities of representation varieties of $G$ 
into reductive algebraic Lie groups; see \cite[Theorem 1.13]{KM}.  
Another obstruction is due to Arapura:  If $G$ is  
quasi-K\"{a}hler, then the characteristic variety $\V_1(G)$ 
is a union of (possibly translated) subtori of $\T_G$; 
see \cite[p.~564]{A}. 

If the group $G$ is $1$-formal, then $E_G=\widehat{\bL}/J$, 
with $\bL$ generated in degree $1$ and $J$ generated in 
degree $2$; thus, $G$ verifies Morgan's test.  It is therefore 
natural to explore the relationship between $1$-formality 
and quasi-projectivity.  (In contrast with the K\"{a}hler case, 
it is known from \cite{M, KM} that these two properties are 
independent.) Another motivation for our investigation comes 
from the study of fundamental groups of complements of plane 
algebraic curves. This class of $1$-formal, quasi-projective 
groups contains, among others, the celebrated Stallings group;  
see \cite{PS06}.

\subsection{Position and resonance obstructions}
\label{subsec=resobs}  

Our main contribution to Serre's problem is Theorem 
\ref{thm=posobstr} below, which provides a new type of 
restriction on fundamental groups of smooth, quasi-projective 
complex algebraic varieties. In the presence of $1$-formality, 
this restriction is expressed entirely in terms of a 
classical invariant, namely the cup-product map 
$\cup _G\colon \bigwedge ^2H^1(G,\C) \to H^2(G,\C)$. 

\begin{thm} 
\label{thm=posobstr} 
Let $M$ be a  connected, quasi-compact K\"{a}hler manifold. 
Set $G=\pi_1(M)$ and let $\{ \V^{\alpha} \}$ be the 
irreducible components of $\V_1(G)$ containing $1$. Denote 
by $\TT^{\alpha}$ the tangent space at $1$ of $\V^{\alpha}$.
Then the following hold.

\begin{enumerate}
\item  \label{a1}  
Every positive-dimensional tangent space $\TT^{\alpha}$ 
is a $p$-isotropic linear subspace of $H^1(G, \C)$, 
of dimension at least $2p+2$, for some 
$p=p(\alpha) \in \{0,1\}$. 

\item  \label{a2} 
If $\alpha \ne \beta $, then $\TT^{\alpha} \bigcap \TT^{\beta}=\{0\}$.
\end{enumerate}
Assume further that $G$ is $1$-formal.  Let $\{ \R^{\alpha} \}$ 
be the irreducible components of $\R_1(G)$.  Then 
the following hold.
\begin{enumerate}
\setcounter{enumi}{2}
\item \label{a0}
The collection $\{ \TT^{\alpha}\}$ coincides with the 
collection $\{ \R^{\alpha}\}$.

\item  \label{a3}
For $1\leq k\leq b_1(G)$, we have 
$\R_k(G)=\{0\} \cup \bigcup\nolimits_\alpha \R^{\alpha}$, 
where the union is over all components  $\R^{\alpha}$ such that 
$\dim \R^{\alpha} >k+p(\alpha)$.  

\item \label{a4}
The group $G$ has a free quotient of rank at least $2$ 
if and only if $\R_1(G)$ strictly contains $\{ 0\}$.
\end{enumerate}

\end{thm}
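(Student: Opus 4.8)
The plan is to prove the five assertions in the stated order, since each builds on the previous ones. For parts \eqref{a1} and \eqref{a2}, the starting point is Arapura's structure theorem: for a quasi-compact K\"{a}hler manifold $M$, every positive-dimensional irreducible component $\V^{\alpha}$ of $\V_1(G)$ containing $1$ arises by pullback along an ``admissible map'' $f\colon M \to S$ onto a smooth curve (or orbifold) $S$ with $\chi(S) < 0$, so that $\V^{\alpha} = f^*(\T_S)$ up to finite-order characters. Taking tangent spaces at $1$, one gets $\TT^{\alpha} = f^*(H^1(S,\C))$, a subspace on which the cup product factors through $H^1(S,\C)$; the key numerical point is that if $S$ is a smooth projective curve of genus $g\geq 2$ the image of $\bigwedge^2 H^1(S)$ in $H^2(S)$ is $1$-dimensional (so $\TT^{\alpha}$ is $0$-isotropic of dimension $2g \geq 4 = 2\cdot 0 + 4$... wait, $\geq 2p+2$ with $p=0$ gives dimension $\geq 2$, and $2g\geq 4$), while if $S$ is an \emph{open} curve then cup products of $1$-classes vanish entirely, forcing $p=1$ and $\dim \TT^{\alpha} = b_1(S) \geq 2\cdot 1 + 2 = 4$ once $\chi(S)<0$ is imposed. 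The isotropicity index $p\in\{0,1\}$ thus records whether the Arapura curve is compact or not. For \eqref{a2}, distinct components correspond to distinct admissible maps, and a pencil argument (two maps with a common nonzero class in their pullbacks would factor through a common lower fibration, contradicting maximality/irreducibility) gives $\TT^{\alpha}\cap\TT^{\beta}=\{0\}$; this is the part where I expect the main technical work, and it is essentially a reprise of the Delzant--Napier--Ramachandran/Arapura pencil technology.

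For \eqref{a0}, I would invoke Theorem \ref{thm=bnew}\eqref{bn3}: since $G$ is $1$-formal, the irreducible components of $\R_1(G)$ are exactly the tangent spaces at $1$ of the irreducible components of $\V_1(G)$ through $1$, i.e.\ exactly the collection $\{\TT^{\alpha}\}$. So \eqref{a0} is immediate from the earlier theorem once \eqref{a1}--\eqref{a2} are in place, and in particular the $\R^{\alpha}$ inherit the isotropicity and pairwise-trivial-intersection properties.

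For \eqref{a3}, the idea is to compute the jump loci $\R_k(G)$ component by component. A generic point $z$ on $\R^{\alpha}$ has resonance degree in $H^1$ equal to $\dim\R^{\alpha} - 1 - p(\alpha)$: this comes from analyzing the Aomoto complex of the cohomology ring restricted to (the pullback from) the Arapura curve $S$, where $H^\bullet(S)$ is the cohomology of a genus-$g$ surface ($p=0$) or an open curve ($p=1$), and a direct computation of $H^1(H^\bullet S \otimes \C, \mu_z)$ for generic $z\in H^1(S)$ gives $b_1(S) - 2$ in the compact case and $b_1(S)-1$ in the open case — uniformly $\dim\R^{\alpha} - p(\alpha) - 1$. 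Hence $z\in\R_k(G)$ for generic $z\in\R^{\alpha}$ precisely when $\dim\R^{\alpha} - p(\alpha) - 1 \geq k$, i.e.\ $\dim\R^{\alpha} > k + p(\alpha)$; since $\R_k(G)$ is a union of (some of) the $\R^{\alpha}$ together with $\{0\}$ by \eqref{bn1} and the inclusion $\R_k\subseteq\R_1$, taking the union over the surviving components gives the stated formula. One must also check $k\leq b_1(G)$ is the right range so that $\R_k(G)$ can be nonempty only through these components, which follows from $\dim\R^{\alpha}\leq b_1(G)$.

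Finally, \eqref{a4} is a clean corollary. If $G$ surjects onto a free group $F_r$ with $r\geq 2$, pulling back gives $\V_1(F_r) = \T_{F_r} = (\C^*)^r \subseteq \V_1(G)$, a component of dimension $r\geq 2$ through $1$, whose tangent space is an $r$-dimensional subspace of $H^1(G,\C)$ contained in $\R_1(G)$ by \eqref{a0}, so $\R_1(G)\supsetneq\{0\}$. Conversely, if $\R_1(G)\supsetneq\{0\}$ then by \eqref{a0} there is a component $\TT^{\alpha}=\R^{\alpha}$ of positive dimension, hence an admissible map $f\colon M\to S$ with $\chi(S)<0$; composing $\pi_1(M)\to\pi_1^{orb}(S)$ with a surjection of the orbifold curve group onto $F_2$ (available since $\chi(S)<0$) exhibits a free quotient of rank $\geq 2$. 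The only subtlety is arranging the free quotient on the nose rather than a finite-index or virtual statement, which is handled by passing to the orbifold fundamental group of the base curve and using that a hyperbolic $2$-orbifold group surjects onto $F_2$; I regard this, together with the pencil argument in \eqref{a2}, as the places where genuine geometric input (as opposed to formal consequences of Theorems \ref{thm=tcfintro} and \ref{thm=bnew}) is required.
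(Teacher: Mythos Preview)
Your overall architecture matches the paper's, but there is a genuine error in part~\eqref{a1} and a gap in part~\eqref{a3}.

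\medskip
\textbf{Part \eqref{a1}: the isotropicity indices are swapped.} In the paper's Definition~\ref{def=position}, a subspace $V$ is \emph{$0$-isotropic} when the restricted cup product $\bigwedge^2 V \to H^2$ is \emph{trivial}, and \emph{$1$-isotropic} when it has $1$-dimensional image and is non-degenerate. Thus a \emph{non-compact} Arapura curve (where $H^2(C)=0$) gives $p=0$, and a \emph{compact} curve of genus $g\ge 2$ gives $p=1$. You have this reversed, and your own ``wait'' parenthetical is a symptom: with the correct assignment, the bound $\dim\TT^{\alpha}\ge 2p+2$ reads $\ge 2$ for a non-compact curve (and indeed $\mathbb{P}^1$ minus three points has $b_1=2$, so the bound $\ge 4$ you claim is false) and $\ge 4$ for a compact curve of genus $\ge 2$. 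The paper (\S\ref{ss63}) also handles a third case you omit: $C_{\alpha}$ compact but $f_{\alpha}^*$ vanishing on $H^2$, which yields $p=0$.

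\medskip
\textbf{Part \eqref{a3}: the ``generic resonance degree'' argument does not close.} You compute $\dim H^1(H^{\bullet}M,\mu_z)=\dim\R^{\alpha}-p(\alpha)-1$ only for \emph{generic} $z\in\R^{\alpha}$, and then assert that $\R_k(G)$ is a union of certain $\R^{\alpha}$'s together with $\{0\}$. But to conclude that a given nonzero $u\in\R_k(G)$ lies in an $\R^{\alpha}$ with $\dim\R^{\alpha}>k+p(\alpha)$, you would need the resonance degree formula at the specific point $u$, not merely generically. The paper (\S\ref{ss66}) avoids this by arguing directly: given $u\in\R_k(G)\setminus\{0\}$, choose $v_1,\dots,v_k$ with $u\cup v_i=0$ and $u,v_1,\dots,v_k$ independent; each plane $\langle u,v_i\rangle$ lies in some component $\R^{\alpha_i}$, and part~\eqref{a2} forces $\alpha_1=\cdots=\alpha_k$, so $\dim\R^{\alpha}>k$; non-degeneracy then upgrades this to $>k+1$ when $p(\alpha)=1$. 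The reverse inclusion is Lemma~\ref{lem=linalg1}. Your pointwise formula \emph{is} true (it is Proposition~\ref{prop=p4}\eqref{f1}), but the paper proves it only \emph{after} Theorem~\ref{thm=posobstr}, using the tangent cone isomorphism; invoking it here would be circular or at least require an independent argument you have not supplied.

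\medskip
For part~\eqref{a2}, the paper's argument (Lemmas~\ref{lem=fcok}--\ref{lem=finite}) is concrete: if $\V^{\alpha}\ne\V^{\beta}$, restrict $f_{\beta}$ to a generic fiber of $f_{\alpha}$, show this restriction is non-constant, and deduce that any $\rho\in\V^{\alpha}\cap\V^{\beta}$ has finite order; hence the intersection of the tangent spaces is $\{0\}$. Your ``pencil argument'' gesture is in the right direction but is not yet a proof. Parts~\eqref{a0} and~\eqref{a4} are essentially as in the paper.
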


Here, we say that a non-zero subspace $V\subseteq  H^1(G, \C)$ 
is $0$- (respectively, $1$-) {\em isotropic}\/ if the restriction 
of the cup-product map, 
$\cup_G\colon \bigwedge^2 V\to \cup_G (\bigwedge^2 V)$, 
is equivalent to 
$\cup_C \colon \bigwedge^2 H^1(C, \C)\to H^2(C, \C)$,
where $C$ is a non-compact (respectively, compact) smooth, 
connected complex curve. See \ref{def=position} for a more 
precise definition. The relation between Theorem \ref{thm=posobstr} 
and the isotropic subspace theorems of Bauer \cite{Bau} and 
Catanese \cite{Cat} is discussed in detail in \cite{D08}.

In this paper, we consider only components of $\V_1(G)$ 
containing the origin. For a detailed analysis of translated 
components, we refer to \cite{D2} and \cite{DPS-codone}. 

Theorem \ref{thm=posobstr} is derived from basic results of 
Arapura \cite{A} on quasi-K\"{a}hler groups, plus our 
Theorem \ref{thm=bnew}, in the $1$-formal case.  
Part \eqref{a2} is a novel viewpoint, further pursued in 
\cite{DPS-codone} to obtain a completely new type 
of obstruction to quasi-projectivity. Part \eqref{a0} follows from 
$1$-formality alone, cf.~Theorem \ref{thm=bnew}\eqref{bn3}. 
As shown in Examples \ref{ex:nonorientable} and \ref{ex=2134}, 
the other four properties from Theorem \ref{thm=posobstr} require 
in an essential way the quasi-K\"{a}hler hypothesis.

For an arrangement complement $M$, Parts 
\eqref{a2} and \eqref{a3} of the above theorem were proved 
by Libgober and Yuzvinsky in \cite{LY}, using completely 
different methods. 

The ``position" obstructions \eqref{a1} and \eqref{a2} in
Theorem \ref{thm=posobstr} can be viewed as a 
much strengthened form of Arapura's theorem:
they give information on 
how the components of $\V_1(G)$ passing through 
the origin intersect at that point, and how their tangent 
spaces at $1$ are situated with respect to the 
cup-product map $\cup_G$.   

We may also
view conditions \eqref{a1}--\eqref{a4} as a set of ``resonance" 
obstructions for a $1$-formal group to be quasi-K\"{a}hler, 
or for a quasi-K\"{a}hler group to be $1$-formal.  
Since the class of homotopy types of compact K\"{a}hler 
manifolds is strictly larger than the class of homotopy types 
of smooth projective varieties (see Voisin \cite{V}), one 
may wonder whether the class of quasi-K\"{a}hler 
groups is also strictly larger than the class of 
quasi-projective groups.

\subsection{Applications}
\label{subsec=apps}
In the last four sections, we illustrate the efficiency of our  
obstructions to $1$-formality and quasi-K\"{a}hlerianity with 
several classes of examples.  

In Section \ref{sec=wp}, we consider wedges and products 
of spaces. Our analysis of resonance varieties of wedges,  
together with Theorem \ref{thm=posobstr},  shows that 
$1$-formality and quasi-K\"{a}hlerianity behave quite differently 
with respect to the coproduct operation for groups. 
Indeed, if $G_1$ and $G_2$ are $1$-formal, 
then $G_1*G_2$ is also $1$-formal; but, if in addition 
both factors are non-trivial, presented by 
commutator relators only, and one of them is non-free,
then $G_1*G_2$ is not quasi-K\"{a}hler. 
As a consequence of the position obstruction from Theorem
\ref{thm=posobstr}\eqref{a1}, we also show that the
quasi-K\"{a}hlerianity of $G_1*G_2$, where the groups $G_i$
are assumed only finitely presented with infinite abelianization,
implies the vanishing of $\cup_{G_1}$ and $\cup_{G_2}$.

When it comes to resonance varieties, real subspace 
arrangements offer a stark contrast to complex 
hyperplane arrangements, cf.~\cite{MS1, MS2}.  
If $M$ is the complement of an arrangement of transverse 
planes through the origin of $\RR^4$, then $G=\pi_1(M)$ 
passes Sullivan's $\gr$-test.  Yet, as we note in 
Section \ref{sec=realarr}, the group $G$ may fail 
the tangent cone formula from Theorem \ref{thm=tcfintro}, 
and thus be non-$1$-formal; or, $G$ may be $1$-formal, 
but fail tests \eqref{a1}, \eqref{a2}, \eqref{a3} from 
Theorem \ref{thm=posobstr}, and thus be 
non-quasi-K\"{a}hler.

In Section \ref{sec=conf}, we apply our techniques to the 
configuration spaces $M_{g,n}$ of $n$ ordered points 
on a closed Riemann surface of genus $g$.  Clearly, 
the surface pure braid groups $P_{g,n}=\pi_1(M_{g,n})$ 
are quasi-projective.  On the other hand, if $n\ge 3$, 
the variety $\R_1(P_{1,n})$ is irreducible and non-linear. 
Theorem \ref{thm=posobstr}\eqref{a0} shows that 
$P_{1,n}$ is not $1$-formal, thereby verifying a 
statement of Bezrukavnikov \cite{B}.  

We conclude in Section \ref{sec=artinalg} with a study 
of Artin groups associated to finite, labeled graphs, 
from the perspective of their cohomology jumping loci.  
As shown in \cite{KM}, all Artin groups are $1$-formal; 
thus, they satisfy Morgan's homogeneity test. Moreover, 
as we show in Proposition \ref{prop=vart} (building on 
work from \cite{PS1}), the first characteristic varieties  
of right-angled Artin groups are unions of coordinate 
subtori; thus, all such groups pass Arapura's $\V_1$-test.

In \cite[Theorem 1.1]{KM}, Kapovich and Millson establish, 
by a fairly involved argument, the existence of infinitely many 
Artin groups that are not realizable by smooth, quasi-projective 
varieties. Using the isotropicity obstruction from Theorem 
\ref{thm=posobstr}\eqref{a1}, we show that a right-angled Artin group 
$G_{\Gamma}$ is quasi-K\"{a}hler  if and only if $\Gamma$ 
is a complete, multi-partite graph, in which case $G_{\Gamma}$ 
is actually quasi-projective.  This result provides a complete---and 
quite satisfying---answer to Serre's problem within this class of 
groups.  In the process, we take a first step towards 
solving the problem for all Artin groups, by answering it at 
the level of associated Malcev Lie algebras. We also 
determine precisely which right-angled Artin groups 
are K\"{a}hler.

The approach to Serre's problem taken in this paper---based 
on the obstructions from Theorem \ref{thm=posobstr}---has 
led to complete answers for several other classes of groups:
\begin{itemize}
\item In \cite{DPS-bb}, we classify the quasi-K\"{a}hler 
groups within the class of groups introduced by Bestvina 
and Brady in \cite{BB}.\\[-8pt]
\item In \cite{DPS-codone}, we determine precisely which 
fundamental groups of boundary manifolds of line 
arrangements in $\C\PP^2$ are quasi-projective groups.\\[-8pt]
\item In \cite{DS}, we decide which $3$-manifold groups 
are K\"{a}hler groups, thus answering a question raised by 
S.~Donaldson and W.~Goldman in 1989.\\[-8pt]
\item In \cite{P07}, we show that the fundamental groups of 
a certain natural class of graph links in $\Z$-homology spheres 
are quasi-projective if and only if the corresponding links are 
Seifert links.
\end{itemize}

The computations from Section \ref{subsec=jumpartin} 
have been pursued in \cite{PS08}, leading to a complete 
description of the characteristic varieties, in all degrees, 
for toric complexes associated to arbitrary finite simplicial 
complexes.

The obstructions from Theorem \ref{thm=posobstr} are 
complemented by new methods of constructing interesting 
(quasi-)projective groups.  These methods, developed in 
\cite{DPS-bb} and \cite{DPS-complexbb}, lead to 
a negative answer to the following 
question posed by J.~Koll\'{a}r in \cite{Ko}:  Is every 
projective group commensurable (up to finite 
kernels) with a group admitting a quasi-projective 
variety as classifying space?  

\section{Holonomy Lie algebra, Malcev completion, 
and $1$-formality} 
\label{sec=holo}

Given a  group $G$, there are several 
Lie algebras attached to it:  the associated graded Lie 
algebra $\gr^*(G)$, the holonomy Lie algebra $\h(G)$, 
and the Malcev Lie algebra $E_G$.   
In this section, we review the constructions of these 
Lie algebras, and the related notion of $1$-formality, 
which will be crucial in what follows.  We conclude 
with some relevant facts from the deformation theory 
of representations. 

\subsection{ Holonomy Lie algebras} 
\label{subsecz=holo}
We start by recalling the definition of the holonomy 
Lie algebra, due to K.-T.~Chen \cite{C}. 

Let $M$ be a connected CW-complex.  Let $\K$ be a field of 
characteristic $0$.  Denote by $\bL^*(H_1M)$ the free Lie algebra 
on $H_1M=H_1(M,\K)$, graded by bracket length, and 
use the Lie  bracket to identify $H_1M \wedge H_1M$ 
with  $\bL^2(H_1M)$. Write $\langle U \rangle$ for 
the Lie ideal spanned by a subset $U\subset \bL^*(H_1M)$. 
Set
\begin{equation} 
\label{eq=holm}
\h^* (M):=\bL^*(H_1M)/\langle \im (\partial_M\colon 
H_2M  \to  \bL^2(H_1M))\rangle , 
\end{equation}
where $\partial_M$ is induced by the homology diagonal, 
$H_2M \to H_2(M\times M)$, via the K\"{u}nneth decomposition. 
When $M$ has finite $1$-skeleton, the dual of $\partial_M$ is
the cup-product map 
$\cup_M\colon H^1(M,\K) \wedge H^1(M,\K) \to H^2(M,\K)$. 
Note that $\h (M)$ is a quadratic Lie algebra, in that 
it has a presentation with generators in degree $1$ and 
relations in degree $2$ only. We call $\h (M)$ the 
{\em holonomy Lie algebra}\/ of $M$ (over the field $\K$).

Now let $G$ be a group, with Eilenberg-MacLane space 
$K(G,1)$. Define
\begin{equation} 
\label{eq=holg}
\h (G):=\h (K(G,1)) .
\end{equation}
If $M$ is a CW-complex  with $G=\pi_1(M)$, and 
if $f\colon M \to K(G,1)$ is a classifying map, then $f$ induces 
an isomorphism on $H_1$ and an epimorphism on $H_2$. 
This implies that
\begin{equation} 
\label{eq=holeq}
\h (G)=\h (M) .
\end{equation}

\subsection{Malcev Lie algebras} 
\label{subsec=malcev}
Next, we recall some useful facts about the functorial Malcev 
completion of a group, following Quillen \cite[Appendix A]{Q}.  

A {\em Malcev Lie algebra}\/ is a $\K$-Lie algebra $E$, endowed 
with a decreasing, complete filtration
\[
E=F_1E \supset \cdots \supset F_qE \supset  F_{q+1}E \supset \cdots ,
\]
by $\K$-vector subspaces  satisfying $[F_rE,F_sE] \subset  F_{r+s}E$ 
for all $r,s \ge 1$, and with the property that the associated graded 
Lie algebra, $\gr_F^*(E)=\bigoplus_{q\ge 1} F_qE /F_{q+1}E$, is 
generated by $\gr_F^1(E)$.  By completeness of the filtration, the 
Campbell-Hausdorff formula from local Lie theory
\begin{equation} 
\label{eq=ch}
e \cdot f=e+f+\tfrac{1}{2}[e,f]+ 
\tfrac{1}{12}([e,[e,f]]+[f,[f,e]]) + \cdots 
\end{equation}
endows the underlying set of $E$ with 
a group structure,  denoted by $\eexp(E)$. Clearly, $\eexp(E)$ is
a uniquely divisible group.

The lower central series of a Lie algebra $L$ is defined inductively 
by $\Gamma_1 L=L$ and $\Gamma_{q+1} L= [L, \Gamma_q L]$. If $L$ is 
nilpotent, the lower central series filtration gives a canonical 
Malcev Lie structure on $L$.

For a group $G$, denote by $\wh (G)$ the completion of the 
holonomy Lie algebra with respect to the degree filtration.  
It is readily checked that $\wh (G)$, together with the canonical 
filtration of the completion, is a Malcev Lie algebra with 
$\gr_F^*(\wh (G))=\h^* (G)$.

In \cite{Q}, Quillen associates to $G$, in a functorial 
way, a Malcev Lie algebra $E_G$ and a group homomorphism 
$\kk _G\colon G \to \eexp(E_G)$. A  key property of the Malcev 
completion $(E_G,\kappa_G)$ is that  $\kk _G$ induces an 
isomorphism of graded $\K$-Lie algebras
\begin{equation} 
\label{eq=griso}
\gr^*( \kk _G)\colon \gr^*(G) \otimes \K  
\xrightarrow{\,\simeq\,} \gr^*_F(E_G) .
\end{equation}
Here, $\gr^*(G)=\bigoplus_{q\ge 1} \Gamma_qG /\Gamma_{q+1} G$ 
is the graded Lie algebra associated to the lower central series,
$G =\Gamma _1G \supset \cdots \supset \Gamma _qG  
\supset \Gamma _{q+1}G \supset \cdots$, defined inductively 
by setting $ \Gamma _{q+1}G=(G, \Gamma _qG)$, where 
$(A,B)$ denotes the subgroup generated by all commutators 
$(g,h)=ghg^{-1}h^{-1}$ with $g\in A$ and $h\in B$, and with the 
Lie bracket on $\gr^*(G)$ induced by the commutator map 
$(\:,\:)\colon G\times G\to G$. Note that the Lie algebra $\gr (G)$ 
is generated by $G_{\ab}:= \gr^1(G)$, the abelianization of $G$.

If $N$ is a nilpotent group, $E_N$ is a nilpotent Lie algebra, 
and the Malcev filtration coincides with the lower central series 
filtration. Hence, $\eexp(E_N)$ is a nilpotent group. The Malcev 
completion $\kappa_N \colon N \to \eexp(E_N)$ is universal for 
homomorphisms of $N$ into nilpotent, uniquely divisible groups, 
and has torsion kernel and cokernel.

\subsection{Formal spaces and $1$-formal groups} 
\label{subsec=formal}

The important notion of formality of a space was introduced by 
D.~Sullivan in \cite{S}.  Let $M$ be a connected CW-complex. Roughly 
speaking, $M$ is formal if the rational cohomology algebra of $M$ 
determines the rational homotopy type of $M$.  
Many interesting spaces are formal: compact K\"{a}hler 
manifolds \cite{DGMS}, homogeneous spaces of compact 
connected Lie groups with equal ranks \cite{S}; 
products and wedges of formal spaces are again formal.

\begin{definition} 
\label{def=1formal} 
A finitely generated group $G$ is {\em $1$-formal}\/  
if its Malcev Lie algebra, $E_G$, is isomorphic to the 
completion of its holonomy Lie algebra, $\wh (G)$, 
as filtered Lie algebras.
\end{definition}

A fundamental result of Sullivan \cite{S} states that 
$\pi _1(M)$ is $1$-formal whenever $M$ is formal, with 
finite $1$-skeleton.  The converse is not true in general, 
though it holds under certain additional assumptions, 
see \cite{PS1}. It is well-known that $G$ is $1$-formal 
if and only if $E_G$ is isomorphic to the degree completion 
of a quadratic Lie algebra, as filtered Lie algebras; see for 
instance \cite{ABC}. Here are some motivating examples. 

\begin{example} 
\label{ex:w2formal}
If $M$ is obtained from a smooth, complex projective 
variety $\oM$ by deleting a subvariety 
$D \subset \oM$ with $\codim D \geq 2$, 
then $\pi_1(M)=\pi_1(\oM)$. Hence, $\pi_1(M)$ 
is $1$-formal, by \cite{DGMS}.
\end{example}

\begin{example} 
\label{ex:w1formal}
Let $W_*(H^m(M,\C))$ be the Deligne weight filtration \cite{D} 
on the cohomology with complex coefficients of a connected 
smooth quasi-projective variety $M$. It follows from a basic 
result of J.~Morgan \cite[Corollary 10.3]{M} that 
$\pi _1(M)$ is $1$-formal if $W_1(H^1(M,\C))=0$. 
By \cite[Corollary 3.2.17]{D}, this vanishing property 
holds whenever $M$ admits a non-singular compactification 
with trivial first Betti number. As noted in \cite{K}, these 
two facts together establish the $1$-formality of fundamental 
groups of complements of projective hypersurfaces. 
\end{example}

\begin{example} 
\label{ex:w3formal}
If $b_1 (G)=0$, it follows from \cite{Q} that 
$E_G \cong \wh (G)=0$, therefore $G$ is $1$-formal. 
If $G$ is finite, Serre showed in \cite{Se} that $G$ is 
the fundamental group of a smooth complex projective variety.
\end{example}

\subsection{Deformation theory of representations}
\label{ss24}

We close this section by recalling from Goldman--Millson \cite{GM}
several relevant facts in deformation theory, together with an 
application from Kapovich--Millson \cite{KM} to $1$-formal groups.

The key point here is the description of the category of germs of 
$\C$-analytic varieties, $(X,x)$, by functors of Artin rings. Let 
$(\widehat{\OO}, \m)$ denote the complete local ring of $(X,x)$. 
The corresponding functor associates to an Artin 
local $\C$-algebra $(A, \m)$ the set of $A$-points, 
$F_{X,x}(A)= \Hom_{\loc}(\widehat{\OO}, A)$, where 
$\Hom_{\loc}$ denotes local $\C$-algebra morphisms.

Let $H$ be a $\C$-algebra and $L$ a $\C$-Lie algebra. 
On the vector space $H\otimes L$, define a bracket by
\begin{equation}
\label{eq=algdef}
[a\otimes x, b\otimes y]= ab\otimes [x,y],\ 
\text{for $a,b\in H$ and $x,y\in L$}.
\end{equation}
If $H$ is a commutative algebra, this functorial construction 
produces a Lie algebra $H\otimes L$, whereas if  
$H^{\bullet}$ is graded-commutative, it produces 
a graded Lie algebra $H^{\bullet}\otimes L$ 
(for which the Lie identities hold up to the standard 
sign conventions).  

Now let $G$ be a  discrete, finitely generated group, and $\BB$ 
a linear algebraic group, with Lie algebra $\bb$. Denote 
by $\Rep (G, \BB)$ the variety of group representations 
of $G$ into $\BB$, and by $\Rep (\h(G), \bb)$ the variety of  
Lie algebra homomorphisms from $\h(G)$ to $\bb$.

Denote by $(X,x)$ the analytic germ of $\Rep (G, \BB)$ at 
the trivial representation $1$, and by $F=F_{X,x}$ 
the corresponding functor.  Given an Artin ring $(A, \m)$, 
the Lie algebra $\m\otimes \bb$ is nilpotent, 
since $\m^q=0$ for $q\gg 0$. 
It follows from \cite[Theorem 4.3]{GM} that 
\begin{equation}
\label{eq:fagmb}
F(A) = \Hom_{\groups}(G, \eexp(\m \otimes \bb)).
\end{equation}

Using this description, together with the universality property 
of the Malcev completion, Kapovich and Millson obtained in 
\cite[Theorem 17.1]{KM} the following result. 

\begin{theorem}[\cite{KM}] 
\label{thm=kms}
If the group $G$ is $1$-formal, there is an analytic isomorphism, 
\[
(\Rep (\h(G), \bb), 0)\isom (\Rep (G, \BB), 1)\, ,
\]
natural in $\BB$.
\end{theorem}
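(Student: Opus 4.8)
\textbf{Proof proposal for Theorem \ref{thm=kms}.} The plan is to pass through functors of Artin rings, so that the desired isomorphism of analytic germs reduces to a natural isomorphism of deformation functors, and then to identify both sides with a common third functor built from the holonomy Lie algebra. First I would record what the two germs compute: by the Goldman--Millson description \eqref{eq:fagmb}, the functor $F_{\Rep(G,\BB),1}$ sends an Artin local $\C$-algebra $(A,\m)$ to $\Hom_{\groups}(G,\eexp(\m\otimes\bb))$. Since $\m\otimes\bb$ is a nilpotent Lie algebra, $\eexp(\m\otimes\bb)$ is a nilpotent, uniquely divisible group, so by the universal property of the Malcev completion recalled in \S\ref{subsec=malcev}, precomposition with $\kappa_G\colon G\to\eexp(E_G)$ gives a natural bijection
\[
\Hom_{\groups}(G,\eexp(\m\otimes\bb)) \;\cong\; \Hom_{\Lie}(E_G,\m\otimes\bb),
\]
where on the right one takes \emph{filtered} Lie algebra maps (the source carries the Malcev filtration, the target the nilpotent filtration); the point is that $\eexp$ is an equivalence between nilpotent uniquely divisible groups and nilpotent $\C$-Lie algebras, and the functoriality in $\BB$ is automatic since everything is built from the functor $\BB\mapsto\bb$.

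Next I would run the same analysis on the left-hand germ. A Lie algebra homomorphism $\h(G)\to\bb$ deforming the zero map over $A$ is, by definition of the germ and the standard Artin-ring formalism, a homomorphism $\h(G)\to\m\otimes\bb$; passing to the degree completion and using that $\m\otimes\bb$ is nilpotent (hence its filtration is finite), such a map factors uniquely through $\wh(G)=\widehat{\h(G)}$, giving
\[
F_{\Rep(\h(G),\bb),0}(A) \;\cong\; \Hom_{\Lie}(\wh(G),\m\otimes\bb),
\]
again with filtered Lie algebra maps, and again natural in $\BB$. The $1$-formality hypothesis now enters decisively: by Definition \ref{def=1formal} there is an isomorphism of filtered Lie algebras $E_G\cong\wh(G)$, which induces a natural isomorphism between the two functors of Artin rings displayed above. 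By the anti-equivalence between germs of $\C$-analytic varieties and (pro-representable) functors of Artin rings, a natural isomorphism of the pro-represented functors is the same as an isomorphism of the germs; this yields the asserted analytic isomorphism $(\Rep(\h(G),\bb),0)\isom(\Rep(G,\BB),1)$, and naturality in $\BB$ is preserved at every step.

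The main obstacle I anticipate is bookkeeping about filtrations rather than any deep geometric input. One must be careful that the bijection coming from the Malcev universal property is compatible with the Lie-algebra structure and, more importantly, with the relevant filtrations: the universal property of $\kappa_G$ as stated gives a bijection on Hom-sets into nilpotent uniquely divisible groups, and one needs the parallel statement that filtered Lie maps $E_G\to\mathfrak n$ into a nilpotent Lie algebra $\mathfrak n$ correspond exactly, via $\eexp$ and the Campbell--Hausdorff formula \eqref{eq=ch}, to group maps $\eexp(E_G)\to\eexp(\mathfrak n)$; this is where one invokes that $\eexp$ carries the Malcev filtration to the lower-central-series filtration and that, for nilpotent targets, "filtered" is a vacuous condition so no information is lost. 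A secondary technical point is checking that the germ $\Rep(\h(G),\bb)$ at $0$ is pro-represented and that its associated functor is genuinely the naive one $A\mapsto\Hom_{\Lie}(\h(G),\m\otimes\bb)$; this follows because the defining equations of $\Rep(\h(G),\bb)$ are the images of a presentation of $\h(G)$, so they are quadratic (in fact homogeneous) in the coordinates, and such a functor is manifestly pro-representable by its defining complete local ring. Once these compatibilities are in place, the proof is a formal chain of natural isomorphisms with $1$-formality supplying the single non-formal link.
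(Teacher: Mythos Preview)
Your proposal is correct and follows precisely the approach the paper indicates. Note that the paper does not actually give its own proof of Theorem~\ref{thm=kms}; it is stated as a result of Kapovich--Millson \cite[Theorem~17.1]{KM}, with only the one-sentence hint ``using this description, together with the universality property of the Malcev completion.'' Your argument is exactly the unpacking of that hint: identify the functor of Artin rings for $(\Rep(G,\BB),1)$ via \eqref{eq:fagmb}, apply the universal property of $\kappa_G$ with nilpotent uniquely divisible target $\eexp(\m\otimes\bb)$, invoke $1$-formality to replace $E_G$ by $\wh(G)$, and match with the functor for $(\Rep(\h(G),\bb),0)$. The filtration bookkeeping you flag is the only genuine subtlety, and you have located it correctly.
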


\begin{remark}
\label{rem=bab}
Suppose $\BB$ is abelian, in which case $\bb$ is also abelian. 
For a finitely generated group $G$, denote by 
$G_{\abf}= G_{\ab}/\tors (G_{\ab})$ its maximal torsion-free 
abelian quotient, and by $n=\rank (G_{\abf})$ its first Betti number. 
We then have natural analytic isomorphisms, 
\begin{align*}
&(\Rep (G, \BB), 1)\cong (\Rep (G_{\abf}, \BB), 1)\cong (\BB, 1)^n,\\
&(\Rep (\h(G), \bb), 0)\cong (\Rep (\h(G_{\abf}), \bb), 0)\cong (\bb, 0)^n.
\end{align*}
It follows that the analytic isomorphism from Theorem \ref{thm=kms} 
is induced by the local isomorphism $\exp \colon (\bb, 0)\isom (\BB, 1)$. 
When $\BB= \GL_1(\C)=\C^*$, the map 
$\exp \colon (\C, 0)\to (\C^*, 1)$ is just the usual exponential.
\end{remark}

\begin{remark}
\label{rem=bgl}
When $\BB= \GL_r(\C)$, the usual matrix exponential, 
$\exp \colon \m \otimes \gl_r(\C) \to \GL_r(A)$, induces a group isomorphism,
\begin{equation}
\label{eq:bgl}
\exp \colon \eexp(\m \otimes \gl_r(\C))\isom 1+ \m \otimes \gl_r(\C),
\end{equation}
where the right-hand side denotes the subgroup of $\GL_r(A)$ 
consisting of matrices of the form $\id +X$, with all entries of $X$ 
belonging to $\m$.
\end{remark}

\section{Germs of cohomology jump loci}
\label{sec=reljump}

In this section we prove Theorem \ref{thm=tcfintro} from the Introduction.

\subsection{Relative characteristic and resonance varieties}
\label{ss31}
Let $M$ be a connected CW-complex with finite $1$-skeleton. 
Set $G=\pi_1(M)$. Let $\BB$ be a complex linear algebraic group, 
and fix a rational representation, $\rho\colon \BB \to \GL(V)$, 
where $V$ is a complex vector space. Denote by 
$\theta\colon \bb \to \gl(V)$ the morphism of Lie algebras  
induced by $\rho$ on tangent spaces. Recall the relative 
characteristic varieties $\V_k^i(M, \rho)$ defined in 
\eqref{eq:charvarx}, and set 
\begin{equation}
\label{eq=relv}
\V_k^i(G, \rho):= \V_k^i(K(G, 1), \rho). 
\end{equation}
By considering a classifying map $M\to K(G,1)$, it is 
readily seen that $\V_k^1(M, \rho)=\V_k^1(G, \rho)$. 

In general, the (germs of) relative characteristic varieties 
contain more information than the usual characteristic varieties, 
$\V_k^i(G)=\V_k^i(G, \id_{\C^*})$. For instance, if $G$ is 
a finitely generated nilpotent group, then $\V_k^i(G)$ 
depends only on $k$ and the usual Betti number $b_i(G)$; 
see \cite{MacP}.  On the other hand, $\V_k^i(G, \rho)$ 
may depend on more subtle data.  We illustrate this point---and 
the usefulness of the relative characteristic varieties---with 
a simple example, involving nilpotent groups. 

\begin{example}
\label{ex:heisenberg group}
Let $G=\langle x_1, x_2 \mid ((x_1, x_2),x_1),\, ((x_1, x_2),x_2) \rangle$ 
be the Heisenberg group, with abelianization $G_{\ab}=\Z^2$. 
It is readily seen that $\V^1_k(G)=\V^1_k(\Z^2)$: both are equal to 
$\{1\}$ if $k\le 2$, and are empty otherwise.  

Now let $\rho \colon \BB=\SL(2,\C)\hookrightarrow \GL(2,\C)$ be 
the inclusion map. It is not difficult to check that both (reduced) germs,
$(\Rep (G, \BB), 1)$ and $(\Rep (\Z^2, \BB), 1)$, are isomorphic to 
the germ at $0$ of the variety of commuting pairs of matrices from
$\bb =\sl (2, \C)$; see \cite[p.~89]{GM}.

Direct computation shows that $\V^1_3(\Z^2,\rho)= \{1\}$. 
On the other hand, consider the embedding
$\iota\colon (\C^2, 0)\hookrightarrow (\Rep (G, \BB), 1)$, which, 
under the above isomorphism, sends $(u,v)$ to the pair 
$\big( \big(\begin{smallmatrix} 0 & u \\ 0 & 0 
\end{smallmatrix}\big) , \big(\begin{smallmatrix} 0 & v \\ 0 & 0 
\end{smallmatrix}\big) \big)\in \bb^2$. Another direct 
computation (compare with \cite{Al}) shows that 
$\iota((\C^2, 0))\subset (\V^1_3(G,\rho) ,1)$. Hence, 
$(\V^1_3(G,\rho) ,1)\not\cong (\V^1_3(\Z^2,\rho) ,1)$.
\end{example}

Now let $\h$ be a complex, finitely generated Lie algebra, and 
let $\theta \colon \bb \to \gl(V)$ be a complex, finite-dimensional 
representation of a complex, finite-dimensional Lie algebra $\bb$. 
Define the {\em relative resonance varieties}\/ of $\h$ (with respect 
to $\theta$) by
\begin{equation}
\label{eq=relr}
\R_k^i(\h ,\theta)= \{ \theta'\in \Rep( \h, \bb) \mid 
\dim_{\C} H^i(\h, {}_{\theta\theta'} V)\ge k \},
\end{equation}
where Lie algebra cohomology is taken by viewing $V$ as a left $\h$-module 
via $\theta\theta'$.  For each $i$, we have a descending filtration 
$\{ \R_k^i(\h, \theta)\}_{k\ge 0}$ on $\Rep(\h, \bb)$. Note  
that the sets $\R_k(\h, \theta):=\R_k^1(\h, \theta)$ 
are closed subvarieties of $\Rep(\h, \bb)$. 
As we shall see in Corollary \ref{cor=2res}, 
if $\h=\h(G)$ is the holonomy Lie algebra of $G$, then 
$\R_k(\h, \theta)=\R_k(G, \theta)$.

As before, write $\R_k(\h) := \R_k^1(\h, \id_{\gl_1(\C)})$. 
If $\h=\h(G)$, this is a subvariety of $\Rep(\h, \gl_1(\C))=H^1(G, \C)$. 

\subsection{Low-dimensional cohomology and representations}
\label{ss32}

We first recall some standard facts from \cite[Chapters VI and VII]{HS}.

Let $G$ be a finitely generated group and $\rho\colon \BB \to \GL(V)$ 
a rational representation.  Denote by $Z^i(G, \cdot)$ the $i$-cocycles 
for group cohomology, for $i=0,1$. Consider the semidirect 
product of groups $V\rtimes_{\rho}\BB$, and the morphism of varieties
\begin{equation*}
\label{eq=z1gr}
p\colon \Rep(G, V\rtimes_{\rho}\BB) \rightarrow \Rep(G, \BB)
\end{equation*}
induced by the canonical projection, $V\rtimes_{\rho}\BB \to \BB$.

\begin{lemma}
\label{lem=z1gr}
With notation as above, the fiber of $p$  
over $\rho'$ is $Z^1(G, {}_{\rho\rho'}V)$.
\end{lemma}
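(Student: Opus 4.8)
The plan is to unwind both sides of the claimed equality in terms of explicit cocycle data. A representation $G \to V \rtimes_\rho \BB$ can be written as $g \mapsto (c(g), \rho'(g))$ for some map $\rho'\colon G \to \BB$ and some map $c\colon G \to V$; the multiplication rule $(u, b)(u', b') = (u + \rho(b)u', bb')$ forces $\rho'$ to be a homomorphism (this is exactly the condition $p(g \mapsto (c(g),\rho'(g))) = \rho'$) and forces $c$ to satisfy the twisted $1$-cocycle identity
\[
c(gh) = c(g) + \rho(\rho'(g))\, c(h) = c(g) + {}_{\rho\rho'}g \cdot c(h),
\]
i.e.\ $c \in Z^1(G, {}_{\rho\rho'}V)$. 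So first I would set up this dictionary carefully: a point of $\Rep(G, V\rtimes_\rho\BB)$ lying over a fixed $\rho' \in \Rep(G,\BB)$ is the same thing as a pair $(c,\rho')$ with $c$ a twisted $1$-cocycle for the $G$-module structure ${}_{\rho\rho'}V$. This gives the set-theoretic bijection between the fiber $p^{-1}(\rho')$ and $Z^1(G, {}_{\rho\rho'}V)$.

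Second, since $G$ is finitely generated, pick generators $g_1,\dots,g_n$. Both $\Rep(G, V\rtimes_\rho\BB)$ and the cocycle space are cut out inside the ambient product $(V\rtimes_\rho\BB)^n$, respectively $V^n$, by the relations of $G$; the cocycle identity shows $c$ is determined by its values $c(g_i)$, and the bijection above is visibly a morphism of varieties in both directions (polynomial in the matrix/vector entries and in $\rho'$), so over the point $\rho'$ it is an isomorphism of schemes (or of reduced varieties, as one prefers), not merely a bijection of points. That is really all there is to the statement as phrased — it identifies the scheme-theoretic fiber.

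The only place requiring any care is bookkeeping about conventions: which coordinate of $V \rtimes_\rho \BB$ is the "$V$"-part, whether one uses left or right modules, and the sign/side on which $\rho(\rho'(g))$ acts in the cocycle identity. I would fix these once at the start (consistent with the semidirect product convention already introduced in \S\ref{subsec:resvars}, namely $(u,g)$ with $g$ acting on $u$ via $\rho$) and then the cocycle identity falls out mechanically from expanding $(c(gh),\rho'(gh)) = (c(g),\rho'(g))(c(h),\rho'(h))$. There is no genuine obstacle here; the "hard part," such as it is, is merely stating the identification at the level of schemes rather than sets, which is immediate from the fact that the defining equations on both sides match term by term once the dictionary is in place. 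This lemma is then the base case feeding into the analogous statement for $Z^0$ and, ultimately, into the cohomological comparison in the next subsections.
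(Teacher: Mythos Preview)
Your argument is correct and is exactly the standard unwinding of the semidirect-product law into the crossed-homomorphism identity; the paper itself does not prove this lemma at all, but simply records it as a ``standard fact'' with a reference to \cite[Chapters VI and VII]{HS}. So there is nothing to compare: you have supplied the (routine) details that the paper omits, and your bookkeeping about conventions and the scheme-level identification is appropriate and more explicit than what the paper needs for its purposes.
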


Consider the variety 
\[
\Rep_{\rho}(G, \BB)= \{ (v, \rho')\in V\times \Rep(G, \BB)\mid 
\rho\rho'(g)\cdot v=v,\ \text{for all $g\in G$}\},
\] 
and the morphism induced by the second-coordinate projection,
\begin{equation*}
\label{eq=z0gr}
q\colon \Rep_{\rho}(G, \BB)\rightarrow \Rep(G, \BB) .
\end{equation*}

\begin{lemma}
\label{lem=z0gr}
The fiber of $q$ over $\rho'$ is $Z^0(G, {}_{\rho\rho'}V)$.
\end{lemma}

Let $\h$ be a finitely generated Lie algebra and $\theta \colon\bb \to \gl(V)$ 
a representation, with the properties described at the end of \S\ref{ss31}. 
Consider the semidirect product  Lie algebra, $V\rtimes_{\theta}\bb$, 
and the morphism of varieties
\begin{equation*}
\label{eq=z1lie}
\mathsf{p}\colon \Rep(\h, V\rtimes_{\theta}\bb)\rightarrow \Rep(\h, \bb)
\end{equation*}
induced by the canonical projection, $V\rtimes_{\theta}\bb \to \bb$.

\begin{lemma}
\label{lem=z1lie}
The fiber of $\mathsf{p}$ over $\theta'$ is $Z^1(\h, {}_{\theta\theta'}V)$.
\end{lemma}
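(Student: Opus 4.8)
The statement is the Lie-algebra analogue of Lemma~\ref{lem=z1gr}, and the plan is to mimic that argument verbatim in the Lie-theoretic setting. First I would unwind the definition of the map $\mathsf{p}$. An element of $\Rep(\h, V\rtimes_{\theta}\bb)$ is a Lie algebra homomorphism $\Phi\colon \h \to V\rtimes_{\theta}\bb$; composing with the projection $V\rtimes_{\theta}\bb \to \bb$ recovers $\theta' = \mathsf{p}(\Phi)$. Writing $\h$ in terms of generators $z_1, \dots, z_m$ (which suffices since $\h$ is finitely generated) and a set of relations, a homomorphism $\Phi$ is determined by the images $\Phi(z_j) = (c(z_j), \theta'(z_j)) \in V\rtimes_{\theta}\bb$. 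The plan is to show that, once $\theta'$ is fixed, the assignment $z_j \mapsto c(z_j) \in V$ extends to a Lie algebra homomorphism $\h \to V\rtimes_{\theta}\bb$ of the required form precisely when the linear map $c$ so defined is a Lie-algebra $1$-cocycle of $\h$ with values in the module ${}_{\theta\theta'}V$, i.e.\ satisfies $c([x,y]) = (\theta\theta')(x)\cdot c(y) - (\theta\theta')(y)\cdot c(x)$.

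\textbf{Key steps.} The computation driving this is the bracket identity in the semidirect product. For $x, y \in \h$ one has, directly from the Lie bracket $[(u,g),(v,h)] = (\theta(g)(v) - \theta(h)(u), [g,h])$ recalled in the Introduction,
\[
[(c(x), \theta'(x)), (c(y), \theta'(y))] = \big(\theta(\theta'(x))(c(y)) - \theta(\theta'(y))(c(x)),\ [\theta'(x), \theta'(y)]\big).
\]
On the other hand, $\Phi$ being a homomorphism means the left side equals $\Phi([x,y]) = (c([x,y]), \theta'([x,y]))$. The $\bb$-component is automatically consistent because $\theta'$ is already assumed a Lie homomorphism; the $V$-component is exactly the $1$-cocycle condition, noting that $\theta(\theta'(x)) = (\theta\theta')(x)$ is the module action. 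So: fixing $\theta'$, the fiber $\mathsf{p}^{-1}(\theta')$ is naturally identified with the set of linear maps $c\colon \h \to V$ satisfying this identity, which by definition is $Z^1(\h, {}_{\theta\theta'}V)$. I would also check compatibility with the chosen presentation of $\h$ — a candidate $c$ on generators respects the defining relations of $\h$ if and only if it is a cocycle on all of $\h$ — but this is the standard fact that a derivation (twisted by the module action) is determined by, and freely specifiable on, generators subject only to the relations, so it should be dispatched by a sentence referring to \cite[Chapters VI and VII]{HS}.

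\textbf{Main obstacle.} There is no serious obstacle; the proof is a routine transcription of Lemma~\ref{lem=z1gr} from groups to Lie algebras, and the only care needed is bookkeeping with the Koszul-type sign conventions (here in degree zero there are no signs) and making sure the semidirect-product bracket is used in the form stated in \S\ref{subsec:resvars}. The one point worth stating cleanly is that the identification is not merely set-theoretic but an isomorphism of varieties (affine spaces, in fact), so that $\mathsf{p}$ really is the "linear-fibration" picture one wants; this follows because all the conditions involved are linear in $c$ once $\theta'$ is fixed, so the fiber is a linear subspace of $V\otimes \h^{*}$ cut out by the cocycle equations. I would close by remarking, as in the group case, that this is the first step toward computing $\R_k^i(\h,\theta)$ via the filtration of $\Rep(\h, \bb)$ by the ranks of $H^i(\h, {}_{\theta\theta'}V)$, to be combined with the analogous $Z^0$ statement.
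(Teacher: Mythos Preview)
Your argument is correct and is exactly the standard computation one would give. Note, however, that the paper does not supply its own proof of this lemma: it is stated without proof, together with Lemmas~\ref{lem=z1gr}, \ref{lem=z0gr}, and \ref{lem=z0lie}, under the heading ``standard facts from \cite[Chapters VI and VII]{HS}.'' So there is nothing to compare your approach against; you have simply written out the routine verification that the paper elected to omit.
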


Now consider the variety 
\[
\Rep_{\theta}(\h, \bb)= \{ (v, \theta')\in V\times \Rep(\h, \bb)\mid 
\theta\theta'(h)\cdot v=0, \ \text{for all $h\in \h$} \},
\] 
and the morphism induced by the second-coordinate projection,
\begin{equation*}
\label{eq=z0lie}
\mathsf{q}\colon \Rep_{\theta}(\h, \bb)\rightarrow \Rep(\h, \bb).
\end{equation*}

\begin{lemma}
\label{lem=z0lie}
The fiber of $\mathsf{q}$ over $\theta'$ is 
$Z^0(\h, {}_{\theta\theta'}V)$.
\end{lemma}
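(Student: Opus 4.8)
The statement to prove is Lemma \ref{lem=z0lie}: the fiber of $\mathsf{q}\colon \Rep_{\theta}(\h, \bb)\to \Rep(\h, \bb)$ over a point $\theta'$ is $Z^0(\h, {}_{\theta\theta'}V)$. The plan is to simply unwind both definitions and observe they coincide. By construction, the fiber $\mathsf{q}^{-1}(\theta')$ consists of all $v\in V$ such that $(v,\theta')\in \Rep_{\theta}(\h,\bb)$, i.e., such that $\theta\theta'(h)\cdot v=0$ for all $h\in \h$. On the other hand, $Z^0(\h, W)$ for a left $\h$-module $W$ is by definition the space of invariants $W^{\h}=\{w\in W\mid h\cdot w=0 \text{ for all } h\in \h\}$, since the Chevalley--Eilenberg complex in degree $0$ is just $W$ with differential $d^0\colon W\to \Hom(\h,W)$ given by $(d^0 w)(h)=h\cdot w$. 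Here $W={}_{\theta\theta'}V$ is $V$ regarded as an $\h$-module via the composite $\theta\theta'\colon \h\to \gl(V)$, so $h\cdot v=\theta(\theta'(h))(v)$, and the two conditions are literally the same.

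First I would state that $Z^0(\h, {}_{\theta\theta'}V)=({}_{\theta\theta'}V)^{\h}$, recalling the degree-zero part of the Chevalley--Eilenberg cochain complex. Then I would note that the action of $h\in\h$ on the module ${}_{\theta\theta'}V$ is by definition $v\mapsto \theta(\theta'(h))(v)$, so that $v$ is $\h$-invariant precisely when $\theta\theta'(h)\cdot v=0$ for all $h$. Finally I would observe that the latter is exactly the condition cutting out $\Rep_{\theta}(\h,\bb)\cap (V\times\{\theta'\})$ inside $V\times\{\theta'\}$, and that $\mathsf{q}$ restricted to this slice is the first-coordinate identification with $V$; hence $\mathsf{q}^{-1}(\theta')=Z^0(\h,{}_{\theta\theta'}V)$ as claimed. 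One may add the harmless remark that it suffices to impose the vanishing condition on a finite generating set of $\h$, which is consistent with $\Rep_{\theta}(\h,\bb)$ being an algebraic variety; this parallels the finite-generation hypothesis used for the group-theoretic analogue in Lemma \ref{lem=z0gr}.

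There is essentially no obstacle here: this is a definitional bookkeeping lemma, the Lie-algebra counterpart of Lemma \ref{lem=z0gr}, and the ``proof'' is the remark that both sides are the space of $\h$-invariant vectors in ${}_{\theta\theta'}V$. The only point requiring any care is making the identification of the fiber slice $V\times\{\theta'\}$ with $V$ explicit, so that the equality of sets is also an equality as (affine, linear) subvarieties — but this is immediate from the definition of $\Rep_{\theta}(\h,\bb)$ as a subvariety of $V\times\Rep(\h,\bb)$ cut out by the bilinear equations $\theta\theta'(h)\cdot v=0$.
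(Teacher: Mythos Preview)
Your proposal is correct: the lemma is a direct unwinding of the definitions, identifying both sides with the space of $\h$-invariant vectors in ${}_{\theta\theta'}V$. The paper does not give an explicit proof of this lemma, treating it (along with Lemmas \ref{lem=z1gr}--\ref{lem=z1lie}) as a standard fact recalled from \cite[Chapters VI and VII]{HS}; your definitional verification is exactly what lies behind that citation.
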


\begin{remark}
\label{rem=tgsemidir}
If $\rho\colon \BB\to \GL(V)$ is a rational representation, 
and $\theta=d_1(\rho)\colon \bb \to \gl(V)$ is the induced  
representation on tangent spaces, then the Lie algebra of 
the algebraic group $V\rtimes_{\rho}\BB$ is $V\rtimes_{\theta}\bb$; 
see for instance \cite{Hu}.
\end{remark}

\subsection{Beginning of proof of Theorem \ref{thm=tcfintro}}
\label{ss33}

We need one more lemma.

\begin{lemma}
\label{lem=z0art}
Let $(A, \m)$ be an Artin local algebra. Given $v\in A^r$ and 
$s\in \m\otimes \gl_r(\C)$, the following are equivalent:
\begin{enumerate}
\item \label{zz1}
$\eexp(s)^k \cdot v=v$, for some $k\in \Z\setminus \{ 0\}$.
\item \label{zz2}
$\eexp(s) \cdot v=v$.
\item \label{zz3}
$s\cdot v=0$.
\end{enumerate}
\end{lemma}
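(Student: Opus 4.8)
The plan is to prove the cycle of implications $\eqref{zz3}\Rightarrow\eqref{zz2}\Rightarrow\eqref{zz1}\Rightarrow\eqref{zz3}$, exploiting the nilpotence of $\m$ throughout. The implications $\eqref{zz2}\Rightarrow\eqref{zz1}$ (take $k=1$) and $\eqref{zz3}\Rightarrow\eqref{zz2}$ are essentially immediate: if $s\cdot v=0$, then since $\m$ is nilpotent the exponential $\eexp(s)$ is computed by the terminating series $\eexp(s)=\id+s+\tfrac12 s^2+\cdots$ (this is the matrix exponential of Remark \ref{rem=bgl}, as $s\in \m\otimes\gl_r(\C)$), and every term beyond the constant one kills $v$; hence $\eexp(s)\cdot v=v$. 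So the real content is the implication $\eqref{zz1}\Rightarrow\eqref{zz3}$.

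For that implication, write $g=\eexp(s)$, so $g=\id+N$ where $N=s+\tfrac12 s^2+\cdots\in\m\otimes\gl_r(\C)$ is nilpotent (all its entries lie in $\m$, and $\m^q=0$ for $q\gg0$, so $N^q=0$ for $q\gg 0$ as well). The hypothesis is $g^k\cdot v=v$ for some nonzero integer $k$. First I would reduce to the case $k>0$: if $k<0$, note $g$ is invertible (it lies in the group $1+\m\otimes\gl_r(\C)$ of Remark \ref{rem=bgl}), so $g^k\cdot v=v$ is equivalent to $g^{-k}\cdot v=v$ with $-k>0$. Now with $k>0$, expand $g^k=(\id+N)^k=\id+kN+\binom{k}{2}N^2+\cdots$; since $N$ is nilpotent this is $\id+kN+N^2 P$ for some polynomial $P$ in $N$ with $\mathbb{C}$-coefficients, and moreover $kN+N^2P = N(k\,\id + NP)$ where the factor $k\,\id+NP$ is itself of the form $k\,\id+(\text{nilpotent})$, hence invertible (its inverse is again a polynomial in $N$, computed by a finite geometric-type series, since $k\neq 0$ is invertible in $\mathbb{C}$). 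The hypothesis $g^k\cdot v=v$ gives $N(k\,\id+NP)\cdot v=0$; applying the inverse of $k\,\id+NP$ yields $N\cdot v=0$.

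It then remains to pass from $N\cdot v=0$ back to $s\cdot v=0$. Here I would run the same trick in reverse: $N=s+\tfrac12 s^2+\cdots = s(\id+\tfrac12 s+\cdots)$, and the second factor $\id+\tfrac12 s+\cdots$ is of the form $\id+(\text{nilpotent})$, hence invertible with inverse a polynomial in $s$. From $N\cdot v=0$, i.e. $s(\id+\tfrac12 s+\cdots)\cdot v=0$, I cannot immediately cancel because $s$ is on the left; instead I should observe that $N$ and $s$ generate the same (commutative) subalgebra of $\m\otimes\gl_r(\C)$, so $s=N\cdot u$ for some $u$ that is a polynomial in $N$ (namely $u=(\id+\tfrac12 s+\cdots)^{-1}$ rewritten in terms of $N$, which is legitimate since $s$ is a power series in $N$ with invertible leading coefficient). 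Then $N\cdot v=0$ forces $s\cdot v= N u\cdot v = u N\cdot v = 0$, using commutativity. This closes the cycle.

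The main obstacle I anticipate is purely bookkeeping: making precise the claim that $s$ and $N=\eexp(s)-\id$ are mutually expressible as polynomials (equivalently, convergent-but-terminating power series) in one another, with the relevant ``leading coefficient is a unit'' statements justified by nilpotence of $\m$. None of this is deep, but one must be careful that all the inversions take place inside the commutative $\mathbb{C}$-subalgebra of $\m\otimes\gl_r(\C)$ generated by $s$, so that left/right multiplication issues never arise. An alternative, perhaps cleaner, route to $\eqref{zz1}\Rightarrow\eqref{zz3}$ avoiding the $\log$: note $\eexp(s)^k=\eexp(ks)$ by the Campbell--Hausdorff formula \eqref{eq=ch} (the bracket is irrelevant here since everything commutes), so after replacing $s$ by $ks$ — which does not affect the truth of $s\cdot v=0$ since $k$ is invertible in $\mathbb{C}$ — we reduce to the case $k=1$, and then $\eexp(s)\cdot v=v$ reads $(s+\tfrac12 s^2+\cdots)\cdot v=0$, i.e. $s(\id+\tfrac12 s+\cdots)\cdot v=0$; writing $s=(\id+\tfrac12 s+\cdots)^{-1}N$ and cancelling inside the commutative subalgebra gives $s\cdot v=0$ directly. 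I would present whichever of these is shorter in the final write-up.
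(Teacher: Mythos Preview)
Your proposal is correct. Your ``alternative, cleaner route'' at the end is exactly the paper's argument: the paper observes that $\eexp(s)^k=\eexp(ks)$, so \eqref{zz1}$\Rightarrow$\eqref{zz3} reduces to \eqref{zz2}$\Rightarrow$\eqref{zz3} applied to $ks$; and for \eqref{zz2}$\Rightarrow$\eqref{zz3} it simply notes that $\eexp(s)\cdot v=v$ reads $as\cdot v=0$ with $a=\id+\tfrac12 s+\cdots\in 1+\m\otimes\gl_r(\C)\subseteq\GL_r(A)$ invertible, hence $s\cdot v=0$.

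Your first approach (passing through $N=\eexp(s)-\id$ and then recovering $s$ from $N$ via the logarithm) is also fine but adds an unnecessary detour. The commutativity bookkeeping you worry about is a non-issue in the paper's presentation: since $a$ is invertible in $\GL_r(A)$ and commutes with $s$, the equation $as\cdot v=0$ is literally $a\cdot(s\cdot v)=0$ by associativity of the matrix action on $A^r$, and cancelling $a$ is immediate. You can drop the discussion of ``which side $s$ is on'' and the rewriting of $s$ as a polynomial in $N$.
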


\begin{proof}
The implications \eqref{zz2}$\Rightarrow$\eqref{zz1} and 
\eqref{zz3}$\Rightarrow$\eqref{zz2} are clear.  

Now suppose \eqref{zz2} holds. Then $as\cdot v=0$, for some 
$a\in 1+ \m\otimes \gl_r(\C)\subseteq \GL_r(A)$. Hence, 
\eqref{zz3} holds. 

Finally, \eqref{zz1}$\Rightarrow$\eqref{zz3} follows from
\eqref{zz2}$\Rightarrow$\eqref{zz3}, applied to $ks$.
\end{proof}

\begin{theorem}
\label{thm=a1}
Let $G$ be a $1$-formal group, and $\rho\colon \BB \to \GL(V)$ a 
rational representation.  Set $\theta= d_1(\rho)\colon \bb\to \gl(V)$. 
Then, for each $k\ge 1$, the analytic isomorphism from 
Theorem \ref{thm=kms} induces an analytic isomorphism 
of germs,
\[
(\R_k(\h(G), \theta), 0) \isom (\V_k(G, \rho), 1).
\] 
\end{theorem}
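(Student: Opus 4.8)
The plan is to deduce the isomorphism of jump loci from the isomorphism of representation germs of Theorem \ref{thm=kms} by showing that, on each side, the jump loci are ``detected'' fiberwise by the corresponding $Z^0$ and $Z^1$ functors, and that these functors are carried to each other under the Kapovich--Millson isomorphism. Recall that over an Artin local ring $(A,\m)$, $F_{X,1}(A)=\Hom_{\groups}(G,\eexp(\m\otimes\bb))$ by \eqref{eq:fagmb}, while the analogous functor for $(\Rep(\h(G),\bb),0)$ sends $A$ to $\Hom_{\Lie}(\h(G),\m\otimes\bb)$; these two functors of Artin rings are isomorphic by the very construction underlying Theorem \ref{thm=kms}. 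First I would upgrade this to the semidirect-product groups and Lie algebras: applying the same formalism to $V\rtimes_\rho\BB$, whose Lie algebra is $V\rtimes_\theta\bb$ by Remark \ref{rem=tgsemidir}, and using naturality in $\BB$, gives an isomorphism of functors $F_{(\Rep(G,V\rtimes_\rho\BB),1)}\cong F_{(\Rep(\h(G),V\rtimes_\theta\bb),0)}$, compatible with the projections $p$ and $\mathsf{p}$ to the base. By Lemmas \ref{lem=z1gr} and \ref{lem=z1lie}, the fibers of $p$ and $\mathsf{p}$ are exactly the $1$-cocycle spaces $Z^1(G,{}_{\rho\rho'}V)$ and $Z^1(\h(G),{}_{\theta\theta'}V)$; hence for every Artin ring the dimension function $\rho'\mapsto \dim Z^1$ matches up under the isomorphism.

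Next I would do the same for $Z^0$: applying the formalism to $\Rep_\rho(G,\BB)$ and $\Rep_\theta(\h(G),\bb)$ and invoking Lemmas \ref{lem=z0gr} and \ref{lem=z0lie}, the fibers of $q$ and $\mathsf{q}$ are $Z^0(G,{}_{\rho\rho'}V)$ and $Z^0(\h(G),{}_{\theta\theta'}V)$. The subtlety is that on the group side the invariants condition is $\rho\rho'(g)\cdot v=v$ for all $g$, while on the Lie side it is $\theta\theta'(h)\cdot v=0$ for all $h$; one must check these correspond after the Malcev/exponential identification. This is where Lemma \ref{lem=z0art} enters: working over a fixed Artin ring $A$, an element of $\eexp(\m\otimes\bb)$ in the image of the representation acts on $A^r$ through $1+\m\otimes\gl_r(\C)$ (Remark \ref{rem=bgl}), and Lemma \ref{lem=z0art} says that being fixed by $\eexp(s)$ is equivalent to being annihilated by $s$. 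Running this over a generating set of $G$ (finite, by hypothesis) identifies the $A$-points of $\Rep_\rho(G,\BB)$ over a given $\rho'$ with those of $\Rep_\theta(\h(G),\bb)$ over the corresponding $\theta'$, compatibly with projection to the base. So the functor $A\mapsto Z^0$ also matches up.

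With the fiberwise $Z^0$ and $Z^1$ identifications in hand, I would pass to twisted cohomology. From the standard exact sequences (Euler-characteristic / long exact sequence for group and Lie algebra cohomology in low degrees, cf.\ \cite[Chapters VI, VII]{HS}) one has, at the level of germs, that $\dim H^1(G,{}_{\rho\rho'}V)$ is governed by $\dim Z^1$ and $\dim Z^0$ (via $\dim H^1 = \dim Z^1 - \dim V + \dim Z^0$, the $B^1$ term being $\dim V - \dim Z^0$), and similarly on the Lie side. Since both $Z^1$ and $Z^0$ correspond as functors of Artin rings under the germ isomorphism of Theorem \ref{thm=kms}, the jump conditions $\dim H^1\ge k$ cut out analytically isomorphic subgerms. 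More precisely, I would phrase the jump locus $\V_k(G,\rho)$ near $1$ as the locus where the fiber dimension of $p$ (resp.\ the combination with the fiber dimension of $q$) jumps, use upper-semicontinuity and the functor-of-Artin-rings dictionary to transport it to the corresponding locus for $\mathsf{p}$ and $\mathsf{q}$, and conclude that the Kapovich--Millson isomorphism restricts to $(\R_k(\h(G),\theta),0)\isom(\V_k(G,\rho),1)$.

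The main obstacle I anticipate is the bookkeeping in the second step: making the invariants/annihilators correspondence precise at the level of functors of Artin rings, uniformly in $A$ and compatibly with the base, rather than just set-theoretically for a single ring. Lemma \ref{lem=z0art} is tailor-made for this, but one has to be careful that the exponential identification $\eexp(\m\otimes\gl_r(\C))\isom 1+\m\otimes\gl_r(\C)$ of Remark \ref{rem=bgl} is the one implicitly used when defining the $G$-action on $V\otimes A = A^r$ through $\rho$, and that the whole identification is natural in $A$ so that it descends to an isomorphism of the pro-representable functors, hence of the germs. Once that naturality is nailed down, the cohomological step is routine homological algebra.
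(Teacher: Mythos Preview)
Your proposal is correct and follows essentially the same approach as the paper: match $Z^1$ via naturality of Theorem~\ref{thm=kms} applied to the semidirect product $V\rtimes_\rho\BB$ (Lemmas~\ref{lem=z1gr}, \ref{lem=z1lie}, Remark~\ref{rem=tgsemidir}), match $Z^0$ by identifying the functors of Artin rings for $\Rep_\rho(G,\BB)$ and $\Rep_\theta(\h(G),\bb)$ using Lemma~\ref{lem=z0art} and the Malcev-completion/1-formality dictionary (Lemmas~\ref{lem=z0gr}, \ref{lem=z0lie}), and then recover $\dim H^1$ from $\dim Z^1$ and $\dim Z^0$. The only place where your write-up is slightly looser than the paper is the $Z^0$ step: rather than ``running over a generating set of $G$,'' the paper invokes the universality and torsion properties of the Malcev completion to pass from $\Hom_{\groups}(G,\eexp(\m\otimes\bb))$ to $\Hom_{\Lie}(\h(G),\m\otimes\bb)$, and only then applies Lemma~\ref{lem=z0art} to translate the fixed-vector condition into the annihilation condition---but you have already flagged exactly this bookkeeping as the point requiring care.
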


\begin{proof}
Assume in Theorem \ref{thm=kms} that $\theta'\in \Rep(\h(G), \bb)$ 
corresponds to $\rho'\in \Rep(G, \BB)$. We need to show that 
$\dim_{\C} Z^i(G, {}_{\rho\rho'}V)=\dim_{\C} Z^i(\h(G), {}_{\theta\theta'}V)$, 
for $i=0,1$. For $i=1$, this follows from Lemmas \ref{lem=z1gr} 
and \ref{lem=z1lie}, by using the naturality property from 
Theorem \ref{thm=kms}.

Denote by $F_X$ the functor of Artin rings corresponding to the 
germ at $(0,1)$ of the variety $\Rep_{\rho}(G, \BB)$.  Set 
$r=\dim_{\C}V$. It follows from \S\ref{ss24} that 
\[
F_X(A)= \{ (v, \rho'')\in \m^{\times r}\times 
\Rep(G, \eexp(\m\otimes \bb))\mid 
(\id\otimes \theta)\rho''(g)\cdot v=v, \: \forall g\in G\},
\]
where $\id\otimes \theta\colon \eexp(\m\otimes \bb)\to 
\eexp(\m\otimes \gl_r(\C))$ is the group morphism 
induced by $\theta$. Recall from \eqref{eq:bgl}
that $\eexp(\m\otimes \gl_r(\C))\cong 1+ \m \otimes \gl_r(\C)$. 
Using Lemma \ref{lem=z0art},  the universality 
and torsion properties of the Malcev completion explained 
in \S\ref{subsec=malcev}, and the $1$-formality  
of $G$, we may naturally identify $F_X(A)$ with 
\[
\{ (v, \theta'')\in \m^{\times r}\times \Rep(\h(G), \m\otimes \bb)\mid 
(\id\otimes \theta)\theta''(h)\cdot v=0,\: \forall h\in \h(G) \}.
\]

Plainly, this set coincides with $F_Y(A)$, where $F_Y$ is the functor 
of Artin rings corresponding to the germ at $(0,0)$ of the variety 
$\Rep_{\theta}(\h(G), \bb)$.  By Lemmas \ref{lem=z0gr} and \ref{lem=z0lie}, 
$\dim_{\C} Z^0(G, {}_{\rho\rho'}V)=\dim_{\C} Z^0(\h(G), {}_{\theta\theta'}V)$, 
and we are done.
\end{proof}

\subsection{Resonance and Aomoto complexes}
\label{ss34}

We now relate Lie algebra representations to quadratic cones. 

Let $M$ be a connected CW-complex with finite $1$-skeleton and
fundamental group $G=\pi_1(M)$. Denote by $H^{\bullet} M$ the 
cohomology ring of $M$ with $\C$ coefficients. Given a complex 
Lie algebra $L$, consider the graded Lie algebra
$H^{\bullet} M\otimes L$ constructed in \eqref{eq=algdef}. 
Define the associated {\em quadratic cone}\/ by 
\begin{equation}
\label{eq=qcone}
\QQ(M, L)= \{ x\in H^1 M\otimes L\mid [x,x]=0\} .
\end{equation}
Clearly, $\QQ(M, L)$ depends only on $G$, so we denote 
it by $\QQ(G, L)$.

\begin{lemma}
\label{lem=repqc}
Let $G$ be a finitely generated group.
The linear isomorphism between $\Hom_{\C}(\h^1(G), L)$ and 
$H^1G\otimes L$ gives a natural identification,
\[
\Rep(\h(G), L)\cong \QQ(G, L) .
\]
\end{lemma}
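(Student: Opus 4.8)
The plan is to unwind the definitions on both sides and check that the linear isomorphism $\Hom_{\C}(\h^1(G), L) \cong H^1G \otimes L$ restricts to a bijection between the stated subsets. Recall that $\h(G)$ is the quadratic Lie algebra $\bL^*(H_1 G)/\langle \im \partial_G\rangle$, so it is generated in degree $1$ with relations in degree $2$ coming from $\im\partial_G \subseteq \bL^2(H_1 G) = H_1 G \wedge H_1 G$. A Lie algebra homomorphism $\phi \colon \h(G) \to L$ is, by freeness of $\bL^*(H_1 G)$, the same thing as a linear map $\phi_1 \colon H_1 G \to L$ whose unique extension to $\bL^*(H_1 G) \to L$ kills the ideal generated by $\im\partial_G$ — and since the extension is a Lie map and $L$ carries no grading constraint, killing the generating ideal is equivalent to killing $\im\partial_G$ itself, i.e. $\phi_1$ composed with the bracket $H_1 G \wedge H_1 G \to L$ vanishes on $\im\partial_G$.

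First I would fix the correspondence: under $\Hom_{\C}(H_1 G, L) \cong H^1 G \otimes L$, a linear map $\phi_1$ corresponds to an element $x \in H^1 G \otimes L$, and I would record that the composite $H_1 G \wedge H_1 G \xrightarrow{\phi_1 \wedge \phi_1} L \wedge L \xrightarrow{[\,,\,]} L$ is, up to the standard identifications, exactly the map whose ``value'' is encoded by $[x,x] \in H^2 G \otimes L$ via the dual of $\partial_G$ (which, as noted in the excerpt, is the cup product $\cup_G \colon H^1 G \wedge H^1 G \to H^2 G$ when $G$ has finite $1$-skeleton). Concretely, writing $x = \sum_i a_i \otimes e_i$ with $a_i \in H^1 G$ and $e_i \in L$, the graded-symmetric bracket gives $[x,x] = \sum_{i,j} a_i a_j \otimes [e_i, e_j]$, and pairing against a class in $H_2 G$ via $\partial_G$ reproduces evaluating the composite above on $\partial_G$ of that class. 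Hence $[x,x] = 0$ in $H^2 G \otimes L$ if and only if $\phi_1 \wedge \phi_1$ followed by the bracket annihilates $\im\partial_G$, which is precisely the condition for $\phi_1$ to extend to a Lie homomorphism $\h(G) \to L$.

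Then I would assemble the two halves: $x \in \QQ(G, L)$ iff $[x,x]=0$ iff the associated $\phi_1$ factors through $\h(G)$ iff $x$ corresponds to an element of $\Rep(\h(G), L)$; naturality in $G$ (and in $L$) is immediate since every map in sight is induced by $\partial_G$ and by functoriality of $\bL^*(-)$ and $H^1(-)$. The main obstacle — really the only point requiring care — is bookkeeping the sign conventions and the dualization: making sure that the Koszul-sign graded-symmetric bracket on degree-one elements of $H^{\bullet} M \otimes L$ matches the ordinary antisymmetric bracket on $\h(G)$ under the transpose of $\partial_G$, and that ``the extension kills the generated ideal'' genuinely reduces to ``the extension kills the degree-two generators'' (which holds because a Lie algebra map automatically respects brackets, so once the generators of the ideal go to zero the whole ideal does). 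None of this involves a hard computation; it is a definition-chase, and I would present it as such.
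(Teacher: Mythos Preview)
Your proposal is correct and follows essentially the same approach as the paper: both identify $\Rep(\h(G),L)$ with $\{f\in\Hom_\C(H_1G,L)\mid \beta\circ\wedge^2 f\circ\partial_G=0\}$ and then check that, under the linear isomorphism $\Hom_\C(H_1G,L)\cong H^1G\otimes L$, this vanishing condition is equivalent to $[x,x]=0$. The only difference is presentational: the paper fixes bases $\{e_i\}$, $\{y_k\}$, $\{z_a\}$ of $H_1G$, $H_2G$, $L$ with structure constants and verifies that both conditions reduce to the same explicit polynomial equation $\sum_{i,j,a,b} t_i^a t_j^b \mu_{ij}^k s_{ab}^c=0$, whereas you phrase the same verification more invariantly via pairing $[x,x]\in H^2G\otimes L$ against $H_2G$ using the duality between $\cup_G$ and $\partial_G$.
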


\begin{proof}
Plainly, $\Rep(\h(G), L)=\{ f\in \Hom_{\C}(H_1G, L)\mid 
\beta\circ \wedge^2 f\circ \partial_G=0\}$, where 
$\beta\colon L\wedge L\to L$ is the Lie bracket, 
and $\partial_G\colon H_2G\to \bigwedge^2 H_1G$ is 
defined in \eqref{eq=holm}. The element $f$ is naturally 
identified with $x\in H^1G\otimes L$.  We need to check that 
\begin{equation}
\label{eq=repqc}
\beta\circ \wedge^2 f\circ \partial_G=0\ \eqv\ [x,x]=0\, .
\end{equation}

Pick a $\C$-basis $\{ e_i\}$ for $H_1G$, and denote by $\{ e_i^*\}$ 
the dual basis of $H^1G$. Let $\{ y_k\}$ and $\{ z_a\}$ be $\C$-bases 
for $H_2G$ and $L$.  Write $x= \sum_{i,a}t_i^a e_i^*\otimes z_a$, 
where $f(e_i)= \sum_a t_i^a z_a$. Also write 
$\partial_G (y_k)= \frac{1}{2} \sum_{i,j} \mu_{ij}^k e_i\wedge e_j$, 
and $[z_a, z_b]= \sum_{c}s_{ab}^c z_c$. Note that 
$e_i^* \cup e_j^*(y_k)= \mu_{ij}^k$. It is now 
straightforward to check that both conditions from \eqref{eq=repqc} 
are equivalent to $\sum_{i,j,a,b} t_i^a t_j^b \mu_{ij}^k s_{ab}^c=0$, 
for all $k$ and $c$.
\end{proof}

Let $\theta\colon \bb\to \gl(V)$ be a Lie algebra representation 
over $\C$, with $\bb$ and $V$ finite-dimensional. Tensoring 
with $H^{\bullet} M$ the split exact sequence defined by the 
Lie semidirect product $V\rtimes_{\theta}\bb$, we obtain a 
graded vector space decomposition, 
\[
H^{\bullet} M\otimes (V\rtimes_{\theta}\bb)= 
(H^{\bullet} M\otimes V) \oplus (H^{\bullet} M\otimes \bb),
\]
where $H^{\bullet} M\otimes \bb$ is a Lie subalgebra, and 
$H^{\bullet} M\otimes V$ is an abelian Lie ideal.

For $x\in \QQ(M, \bb)\subseteq H^{1}M\otimes (V\rtimes_{\theta}\bb)$, 
we have $0=\ad_{[x,x]}=2 \ad_x^2$.

\begin{definition}
\label{def=relaom}
The {\em relative Aomoto complex} of $H^{\bullet} M$ with respect to 
$\theta\colon \bb\to \gl(V)$ and $x\in \QQ(M, \bb)$ is the subcomplex 
$(H^{\bullet} M\otimes V, \ad_x)$ of the cochain complex
$(H^{\bullet} M\otimes (V\rtimes_{\theta}\bb), \ad_x)$.
\end{definition}

The reason for this terminology is given by the next Lemma. 

\begin{lemma}
\label{lem=abaom}
If $\bb=\C=\gl_1(\C)$ and $\rho=\id_{\bb}$, then every element 
$z\in H^1M\cong H^1M\otimes \bb$ satisfies $[z,z]=0$, 
and the relative Aomoto complex 
$(H^{\bullet} M\otimes V, \ad_{z\otimes 1})$ is identified 
with the usual Aomoto complex, $(H^{\bullet}M, \mu_z)$.
\end{lemma}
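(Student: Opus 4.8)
The plan is to check the two assertions directly, since both are essentially unwinding definitions. For the first, take $\bb = \C = \gl_1(\C)$, which is an abelian Lie algebra, so the Lie bracket $\beta\colon \bb \wedge \bb \to \bb$ is identically zero. Given $z \in H^1 M \cong H^1 M \otimes \bb$, the bracket $[z,z] \in H^2 M \otimes \bb$ is computed from the formula $[a \otimes y, b \otimes z] = ab \otimes [y,z]$ of \eqref{eq=algdef}; since the second factor $[y,z]$ lands in $[\bb,\bb] = 0$, we get $[z,z] = 0$ regardless of $z$. (Alternatively, this is immediate from the criterion \eqref{eq=repqc} in the proof of Lemma \ref{lem=repqc}: with $\bb$ abelian all the structure constants $s_{ab}^c$ vanish, so the quadratic equations cutting out $\QQ(G,\bb)$ are trivially satisfied and $\QQ(G,\C) = H^1 M$.) Thus every $z$ lies in the quadratic cone and the relative Aomoto complex $(H^\bullet M \otimes V, \ad_{z \otimes 1})$ is defined.

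For the second assertion, I would identify the ambient graded Lie algebra. With $\bb = \C$ acting on $V$ via $\theta = \id_\bb$, the semidirect product $V \rtimes_\theta \bb$ has underlying space $V \oplus \C$, with $\C$ central-free: the only nonzero brackets are $[(0,1),(v,0)] = (\theta(1)(v), 0) = (v,0)$. Tensoring with $H^\bullet M$ and restricting $\ad_{z \otimes 1}$ to the abelian ideal $H^\bullet M \otimes V$, one computes for $a \otimes v \in H^i M \otimes V$ that
\[
\ad_{z \otimes 1}(a \otimes v) = [z \otimes 1, a \otimes v] = (za) \otimes [1, v] = (za) \otimes v,
\]
using that $\theta(1) = \id_V$. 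Under the identification $H^i M \otimes V \cong H^i M$ (after choosing a basis of $V$, or simply noting $V$ is a trivial "multiplicity" factor since $\theta = \id$), the operator $a \otimes v \mapsto (za)\otimes v$ becomes left multiplication $\mu_z\colon H^i M \to H^{i+1} M$, $a \mapsto za$. Hence the relative Aomoto complex is identified, as a cochain complex, with $(H^\bullet M, \mu_z)$, which is exactly the usual Aomoto complex of Falk et al.

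There is no real obstacle here; the only point requiring a modicum of care is the Koszul sign bookkeeping. Since $z \otimes 1$ has degree $1$ in $H^\bullet M \otimes (V \rtimes_\theta \bb)$, the graded Jacobi identity gives $\ad_{[z \otimes 1, z \otimes 1]} = 2 \ad_{z\otimes 1}^2$, and we have just seen $[z\otimes 1, z\otimes 1] = 0$, so $\ad_{z \otimes 1}^2 = 0$ and the complex is genuinely a cochain complex — consistent with the fact that $\mu_z^2 = 0$ because $z \cdot z = 0$ in the graded-commutative ring $H^\bullet M$ (an element of odd degree squares to zero). One should just make sure the sign in $[z \otimes 1, a \otimes v]$ matches the convention declared after \eqref{eq=grlie}; with the stated convention $[a \otimes y, b \otimes z] = ab \otimes [y,z]$ on the nose, there is no extra sign to track in this rank-one situation, and the identification is on the nose.
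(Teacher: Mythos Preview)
Your proof is correct and follows essentially the same approach as the paper: both argue the first assertion from the abelianness of $\bb$ via \eqref{eq=algdef}, and both verify the second by directly computing the bracket $[z\otimes 1, a\otimes v]=(z\cup a)\otimes v$ in the semidirect product and identifying it with left multiplication by $z$. Your extra remarks (the alternative route through Lemma \ref{lem=repqc}, and the sign check that $\ad_{z\otimes 1}^2=0$) are harmless embellishments; note only that in this setting $V=\C$ is forced, so your ``multiplicity factor'' aside is not really needed.
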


\begin{proof}
The first assertion follows at once from \eqref{eq=algdef}, 
since $\bb$ is abelian.  To check the second claim,
pick $x=z\otimes 1\in H^1M\otimes \bb \equiv H^1M$, and 
$y\otimes 1\in H^iM\otimes V\equiv H^iM$.
Then 
\[
[x, y\otimes 1]= zy\otimes [(0,1), (1,0)]= 
z\cup y\otimes 1\in H^{i+1}M\otimes V,
\] 
since the product is computed in the cohomology ring $H^{\bullet}M$, 
and the bracket in the Lie semidirect product $\C\rtimes_{\id} \C$. 
This identifies the relative Aomoto complex 
$(H^{\bullet}M\otimes V, \ad_{x})$ with the usual 
Aomoto complex $(H^{\bullet}M, \mu_z)$, and we 
are done.
\end{proof}

Returning now to the general situation, let 
$G$ be a finitely generated group, and let 
$\theta\colon \bb\to \gl(V)$ be a Lie algebra 
representation as above.  

\begin{lemma}
\label{lem=resaom}
Given $\theta'\in \Rep(\h(G), \bb)$, we have a natural isomorphism
\[
H^1(\h(G), {}_{\theta\theta'}V)\cong H^1(H^{\bullet}G\otimes V, \ad_x),
\]
where $x\in \QQ(G, \bb)$ corresponds to $\theta'$ under the 
isomorphism from Lemma \ref{lem=repqc}.
\end{lemma}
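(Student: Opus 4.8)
The statement is a low-degree comparison between Lie algebra cohomology of the holonomy Lie algebra $\h(G)$ and the cohomology of the relative Aomoto complex. The natural approach is to write out explicit small-degree resolutions (or cochain complexes) computing each side and match them term by term. For the left side, Lie algebra cohomology $H^\bullet(\h(G), {}_{\theta\theta'}V)$ in degrees $0$ and $1$ is computed by the truncated Chevalley--Eilenberg complex
\[
0 \to V \xrightarrow{\ d^0\ } \Hom_\C(\h^1(G), V) \xrightarrow{\ d^1\ } \Hom_\C(\h^2(G), V),
\]
where $d^0(v)(h) = (\theta\theta')(h)\cdot v$ and $d^1$ is the usual differential involving both the action $\theta\theta'$ and the bracket of $\h(G)$. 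Since $\h(G)$ is a \emph{quadratic} Lie algebra with generators $\h^1(G) = H_1 G$ and relations the image of $\partial_G$, its degree-one and degree-two pieces are $\h^1(G) = H_1 G$ and $\h^2(G) = \bigwedge^2 H_1 G / \im \partial_G$. For the right side, the relative Aomoto complex $(H^\bullet G \otimes V, \ad_x)$ in low degrees reads
\[
0 \to H^0 G \otimes V \xrightarrow{\ \ad_x\ } H^1 G \otimes V \xrightarrow{\ \ad_x\ } H^2 G \otimes V,
\]
where $x \in \QQ(G, \bb)$ corresponds to $\theta' $ via Lemma \ref{lem=repqc}. Using $H^0 G = \C$, $H^1 G = \Hom_\C(H_1 G, \C)$, and the cup product $\cup_G \colon \bigwedge^2 H^1 G \to H^2 G$ (whose dual is $\partial_G$), we get $H^1 G \otimes V \cong \Hom_\C(H_1 G, V)$ canonically.

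The plan is to construct a commutative diagram identifying these two three-term complexes. First I would write down the identification $H^1 G \otimes V \cong \Hom_\C(H_1 G, V) = \Hom_\C(\h^1(G), V)$ and $H^0 G \otimes V = V$, and then verify that under these identifications the differential $\ad_x$ at the first stage agrees with $d^0$: this amounts to unwinding $[x, v\otimes 1]$ in the semidirect product $H^\bullet G \otimes (V \rtimes_\theta \bb)$ and comparing it with $(\theta\theta')(\cdot)\cdot v$, which is essentially the computation already done in the proof of Lemma \ref{lem=abaom} (with $\gl_1$ replaced by general $\bb$, and the bracket $[(0,v),(g,0)] = (\theta(g)(v),0)$ in $V \rtimes_\theta \bb$). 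Second, I would identify the second differential: the subtle point is that $\ad_x$ from $H^1 G \otimes V$ lands in $H^2 G \otimes V$, but the Chevalley--Eilenberg $d^1$ lands in $\Hom_\C(\h^2(G), V) = \Hom_\C(\bigwedge^2 H_1 G / \im\partial_G, V)$. So the target spaces do not match on the nose — rather, $\Hom_\C(\h^2(G), V)$ is naturally a \emph{subspace} of $\Hom_\C(\bigwedge^2 H_1 G, V) = \bigwedge^2 H^1 G \otimes V \supseteq (\im \cup_G) \otimes V \cong H^2 G \otimes V$ only after dualizing carefully, or more precisely one must check that the image of $\ad_x^1$ computes the same kernel as $d^1$. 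The clean way is: the kernel of $\ad_x$ on $H^1 G \otimes V$ is what we want, so it suffices to show $\ker(\ad_x^1 \colon H^1 G \otimes V \to H^2 G \otimes V) = \ker(d^1 \colon \Hom_\C(\h^1(G),V) \to \Hom_\C(\h^2(G),V))$ under the identification of the sources, and similarly $\im(\ad_x^0) = \im(d^0)$. The latter is immediate from the first step; the former requires checking that $\ad_x^1$ and $d^1$, viewed as maps out of the same source, have the same kernel — equivalently that the relations of $\h(G)$ (the image of $\partial_G$, dually the image of $\cup_G$) are exactly what gets "killed" when passing from $\bigwedge^2 H^1 G \otimes V$ down to $H^2 G \otimes V$.

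The computation is most transparent in coordinates, mirroring the proof of Lemma \ref{lem=repqc}: pick a basis $\{e_i\}$ of $H_1 G$ with dual $\{e_i^*\}$, bases $\{y_k\}$ of $H_2 G$ and $\{z_a\}$ of $\bb$ and $\{w_c\}$ of $V$, write $x = \sum t_i^a e_i^* \otimes z_a$, write $\partial_G(y_k) = \tfrac12 \sum \mu_{ij}^k e_i \wedge e_j$, and write $\theta(z_a)(w_c) = \sum_d \tau_{ac}^d w_d$. Then for $\phi = \sum \phi_i^c e_i^* \otimes w_c \in H^1 G \otimes V$ one computes $\ad_x(\phi)$ in $H^2 G \otimes V$ by pairing with $y_k$: a short calculation gives a formula $\langle \ad_x(\phi), y_k \rangle = \sum_{i,j,a} \mu_{ij}^k\, t_i^a\, \tau_{a\,\cdot}^{\cdot}(\phi_j^{\cdot})$-type expression, which matches the value of the Chevalley--Eilenberg $d^1(\phi)$ on the relation $\partial_G(y_k) \in \bigwedge^2 \h^1(G)$. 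I would then conclude that the two three-term complexes are isomorphic in degrees $\le 1$, hence have the same $H^0$ and $H^1$, and in particular $H^1(\h(G), {}_{\theta\theta'}V) \cong H^1(H^\bullet G \otimes V, \ad_x)$, naturally in $\theta'$ (naturality is automatic since every identification used is basis-independent). I expect the main obstacle to be purely bookkeeping: getting the dualization between $\partial_G$ and $\cup_G$ and the placement of $\h^2(G)$ versus $H^2 G$ exactly right, so that the two differentials are genuinely identified rather than merely "morally the same." Everything else reduces to the semidirect-product bracket computation already carried out in Lemma \ref{lem=abaom}.
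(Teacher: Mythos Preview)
Your approach is correct and would go through, but the paper takes a slicker route for the $1$-cocycles that sidesteps both the coordinate computation of $d^1$ and the $\h^2(G)$-versus-$H^2G$ bookkeeping you flag as the main obstacle. Instead of writing out the Chevalley--Eilenberg $d^1$ and matching it against $\ad_x$, the paper applies Lemma~\ref{lem=z1lie} to the semidirect product: the fiber of $\mathsf{p}\colon\Rep(\h(G), V\rtimes_\theta\bb)\to\Rep(\h(G),\bb)$ over $\theta'$ is precisely $Z^1(\h(G),{}_{\theta\theta'}V)$. Then Lemma~\ref{lem=repqc}, now with $L=V\rtimes_\theta\bb$, identifies that fiber with $\{a\in H^1G\otimes V\mid [a+x,a+x]=0\}$. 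Since $V$ is an abelian ideal in the semidirect product and $[x,x]=0$ already, the condition $[a+x,a+x]=0$ collapses to $[x,a]=0$, i.e., $a\in Z^1(H^\bullet G\otimes V,\ad_x)$. The coboundary identification $B^1$ is handled exactly as you propose, via the direct bracket computation $[x,1\otimes v]=\sum_i e_i^*\otimes\theta\theta'(e_i)\cdot v$. What the paper's route buys is that two lemmas already in hand do all the work for $Z^1$, with no need to confront $\Hom_\C(\h^2(G),V)$ at all; what your route buys is self-containedness and an explicit chain-level isomorphism, at the cost of the dualization care you correctly anticipate.
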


\begin{proof}
Denoting by $Z^1$ and $B^1$ the respective cocycles and 
coboundaries, we will prove the existence of natural compatible 
isomorphisms, for both of them.

For $Z^1$, it follows from Lemmas \ref{lem=z1lie} and \ref{lem=repqc} 
that $Z^1(\h(G), {}_{\theta\theta'}V)$ is naturally identified with 
$\{ a\in H^1G \otimes V\mid [a+x, a+x]=0\}$. Clearly, $[a+x, a+x]=0$ 
if and only if $[x,a]=0$, which happens precisely when 
$a\in Z^1(H^{\bullet}G\otimes V, \ad_x)$.

Under this natural isomorphism, $B^1(\h(G), {}_{\theta\theta'}V)$ 
is identified with the image of the linear map, 
$d\colon V\to \Hom_{\C}(H_1G, V)$, defined by 
$dv(h)= \theta\theta'(h)\cdot v$, for $v\in V$ and 
$h\in H_1G=\h^1(G)$. To finish the proof, we will show that 
$d\equiv \ad_x \colon H^0G\otimes V \to H^1G \otimes V$.

As in the proof of Lemma \ref{lem=repqc}, write 
$x=\sum_i e_i^*\otimes b_i$, with $b_i=\theta'(e_i)$. Then 
\[
[x, 1\otimes v]= \sum_i e_i^*\otimes [b_i, v]= 
\sum_i e_i^*\otimes \theta(b_i)\cdot v=
\sum_i e_i^*\otimes 
\theta\theta'(e_i)\cdot v=dv,
\]
as claimed.
\end{proof}

\begin{remark}
\label{rem=murelres}
One may generalize Definition \ref{def=relaom} to an 
an arbitrary connected, graded-commutative $\C$-algebra
$H^{\bullet}$, by setting $\QQ(H^{\bullet}, \bb)= 
\{ x\in H^1\otimes \bb \mid [x,x]=0 \}$, and proceeding 
as above. If $H^{\bullet} \to H'^{\bullet}$ is a morphism of 
graded $\C$-algebras, which is an isomorphism in 
degree $\bullet =1$ and a monomorphism for 
$\bullet =2$, then the corresponding quadratic cones 
are identified, as well as the cohomology in degree one 
of the associated Aomoto complexes.

Let $H^1$ and $H^2$ be $\C$-vector spaces, 
with $\dim_{\C}H^1< \infty$, and let 
$\mu\colon H^1\wedge H^1\to H^2$ be a linear map.  
Set $H^0=\C\cdot 1$.  Associated to these data, 
there is a connected, graded-commutative algebra, 
$H^{\bullet}(\mu):= H^0\oplus H^1\oplus H^2$.
Define
\[
\R_k(\mu, \theta)= \{ x\in H^1\otimes \bb \mid [x,x]=0 \ \text{and  
$\dim_{\C}H^1(H^{\bullet}(\mu)\otimes V, \ad_x)\ge k$} \}.
\]
Then the varieties $\{ \R_k(\mu, \theta)\}_{k\ge 0}$ depend 
only on the corestriction of $\mu$ to its image, and the 
representation $\theta\colon \bb\to \gl(V)$. As in \S\ref{ss31}, 
set $\R_k(\mu)=\R_k(\mu, \id_{\gl_1(\C)})$.

Now let $M$ be connected complex  with finite $1$-skeleton. 
Note that the degree-$2$ truncation of the cohomology ring, 
$H^{\le 2}(M,\C)$, is isomorphic to $H^{\bullet}(\cup_M)$. 
Thus, the varieties $\R_k(\cup_M,\theta)$ coincide with 
the relative resonance varieties $\R^1_k(M, \theta)$ 
defined in \eqref{eq:resvarx}.  
\end{remark}

\begin{corollary}
\label{cor=2res}
Let $G$ be a  finitely generated group and let 
$\theta\colon \bb \to \gl(V)$ be a morphism of 
complex Lie algebras, with $\bb$ and $V$ finite-dimensional.  
Then the varieties $\R_k(\h(G), \theta)$ and 
$\R_k(G, \theta)$ are naturally isomorphic, for all $k\ge 1$.
\end{corollary}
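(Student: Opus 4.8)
The plan is to assemble the corollary directly from Lemmas \ref{lem=repqc} and \ref{lem=resaom}, once both varieties are placed inside a common ambient affine space. First I would record that $\R_k(G,\theta)=\R^1_k(M,\theta)$ for any connected CW-complex $M$ with finite $1$-skeleton and $\pi_1(M)=G$, and that this locus depends only on $G$; so I may compute it with $M=K(G,1)$ (such a space with finite $1$-skeleton exists since $G$ is finitely generated), or, equivalently, pass to $K(G,1)$ along a classifying map using Remark \ref{rem=murelres}. This identifies $\R_k(G,\theta)$ with the set of $x\in H^1G\otimes\bb$ satisfying $[x,x]=0$ and $\dim_\C H^1(H^\bullet G\otimes V,\ad_x)\ge k$, where $H^\bullet G=H^\bullet(K(G,1),\C)$.

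Next, Lemma \ref{lem=repqc} gives a natural identification of affine varieties $\Rep(\h(G),\bb)\cong\QQ(G,\bb)=\{x\in H^1G\otimes\bb\mid[x,x]=0\}$, induced by the linear isomorphism $\Hom_\C(\h^1(G),\bb)\cong H^1G\otimes\bb$; its target is exactly the ambient space from the previous step. Under this identification, Lemma \ref{lem=resaom} provides, for each $\theta'\in\Rep(\h(G),\bb)$ corresponding to $x\in\QQ(G,\bb)$, a natural isomorphism $H^1(\h(G),{}_{\theta\theta'}V)\cong H^1(H^\bullet G\otimes V,\ad_x)$. Comparing dimensions, $\theta'$ lies in $\R_k(\h(G),\theta)$ precisely when $x$ lies in $\R_k(G,\theta)$; hence the variety isomorphism of Lemma \ref{lem=repqc} restricts to the desired isomorphism $\R_k(\h(G),\theta)\isom\R_k(G,\theta)$ for every $k\ge1$, and naturality is inherited from that of the two lemmas.

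Since all the genuine work has already been done in those two lemmas, I do not expect a real obstacle. The only point deserving a moment's care is the reduction to $K(G,1)$ in the first step — that is, verifying that replacing $M$ by its classifying space leaves $\R^1_k$ unchanged — but this is precisely the situation addressed by Remark \ref{rem=murelres}, so it comes down to the standard fact that a classifying map induces an isomorphism on $H^1$ and a monomorphism on $H^2$.
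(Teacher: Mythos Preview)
Your proposal is correct and follows exactly the route the paper intends: the corollary is stated without proof precisely because it is an immediate consequence of Lemma~\ref{lem=repqc} (identifying $\Rep(\h(G),\bb)$ with the quadratic cone) together with Lemma~\ref{lem=resaom} (identifying the degree-one cohomologies pointwise), with Remark~\ref{rem=murelres} handling the passage from an arbitrary $M$ to $K(G,1)$. Your write-up simply makes explicit what the paper leaves to the reader.
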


\subsection{End of proof of Theorem \ref{thm=tcfintro}}
\label{ss35}

The first part follows at once from Theorem \ref{thm=a1} 
and Corollary \ref{cor=2res}.

For basic facts on tangent cones we refer the reader to Whitney's 
book \cite{Wy}.  By Remark \ref{rem=murelres}, we infer that the 
isomorphism from Theorem \ref{thm=a1} induces an isomorphism of 
varieties, $TC_1(\V_k(G, \rho))\cong TC_0(\R_k(\mu_G, \theta))$, 
where $\mu_G$ denotes the corestriction of $\cup_G$ to its image. 
Note that $\dim_{\C}H^{\bullet}(\mu_G)<\infty$, and the (finite) 
matrices of the differential $\ad_x$ of the Aomoto complex 
$(H^{\bullet}(\mu_G)\otimes V, \ad_x)$ have entries consisting of
linear forms. By construction, $\R_k(\mu_G, \theta)$ is a cone, hence 
$TC_0(\R_k(\mu_G, \theta))\cong \R_k(\mu_G, \theta)\cong \R_k(G, \theta)$. 
This proves the second part of Theorem \ref{thm=tcfintro}.

Finally, assume $\BB=\GL_1(\C)=\C^*$ and $\rho=\id_{\BB}$. 
As noted in Remark \ref{rem=bab}, the local isomorphism 
$(\Rep(\h(G), \C), 0)\cong (\Rep(G, \C^*), 1)$ is induced by 
the usual exponential, 
\[
\exp\colon (\Hom_{\groups}(G_{\ab}, \C), 0)
\isom (\Hom_{\groups}(G_{\ab}, \C^*), 1),
\]
which identifies both tangent spaces with $H^1G=H^1(G, \C)$. 
The proof of Theorem \ref{thm=tcfintro} is complete.

\section{$1$-Formality and rationality properties}
\label{sec=linq}

In this section, $G$ is a finitely generated group. We deduce from 
Theorem \ref{thm=tcfintro} two remarkable linearity and rationality 
properties of the resonance varieties $\R_k(G)$, in the presence 
of $1$-formality. 

\subsection{Structure of exponential tangent cones}
\label{ss41}
As usual, let $\T_G=H^1(G, \C^*)=\Hom (G, \C^*)$ be the character 
group of $G$. Consider the exponential homomorphism, 
$\exp\colon H^1(G, \C) \to \T_G$, with image $\T_G^0$, 
the connected component of the identity $1\in \T_G$. 

\begin{definition}
\label{def=tau}
For a Zariski closed subset $W\subseteq \T_G$, define the 
{\em exponential tangent cone}\/ of $W$ at $1$ by
\[
\tau_1(W)= \{ z\in H^1(G, \C) \mid \exp(tz)\in W, \ 
\text{for all $t\in \C$} \}.
\]
\end{definition}

Plainly, $\tau_1(W)$ depends only on the analytic germ of $W$ at $1$.
It is also easy to check that $\tau_1(W)\subseteq TC_1(W)$, whence
our terminology.
The next Lemma is the exponential version of a result of 
Laurent \cite[Lemme 3]{Lau}.

\begin{lemma}
\label{lem=elau}
For any $W$ as above, $\tau_1(W)$ is a finite union of rationally 
defined linear subspaces of $H^1(G, \C)$.
\end{lemma}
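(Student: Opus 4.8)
The plan is to prove Lemma~\ref{lem=elau} by reducing it to a statement about subgroups of the algebraic torus $\T_G$ and then invoking Laurent's theorem on the structure of such subgroups. First I would fix a splitting $\T_G=\T_G^0\times F$, where $\T_G^0\cong(\C^*)^n$ is the identity component (with $n=b_1(G)$) and $F$ is the finite group $\Hom(\tors(G_{\ab}),\C^*)$. Since $\tau_1(W)$ depends only on the germ of $W$ at $1$, and the image of every one-parameter subgroup $t\mapsto\exp(tz)$ lies in $\T_G^0$, I may replace $W$ by $W\cap\T_G^0$; thus without loss of generality $W\subseteq(\C^*)^n$ and I work with the exponential map $\exp\colon\C^n\to(\C^*)^n$, $(z_1,\dots,z_n)\mapsto(e^{z_1},\dots,e^{z_n})$.

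Next I would translate the membership condition $\exp(tz)\in W$ for all $t\in\C$. Pick a defining set of Laurent polynomials $f$ for $W$, and note that $f(\exp(tz))$ is, after clearing denominators, an exponential sum $\sum_j c_j e^{\langle m_j,z\rangle t}$ in the variable $t$; such a function vanishes identically in $t$ precisely when the terms sharing a common exponent $\langle m_j,z\rangle$ have coefficient sum zero. The key combinatorial observation is that the partition of the exponents $m_j$ into classes on which $\langle\cdot,z\rangle$ agrees depends only on which of the finitely many integer linear forms $\langle m_j-m_{j'},\cdot\rangle$ vanish at $z$. Consequently the set $\tau_1(W)$ is a finite union of sets of the form $\{z : \langle m,z\rangle=0 \text{ for } m\in S\}\cap\{z:\text{certain coefficient sums vanish}\}$; the first factor is a rationally defined linear subspace $L_S$, and on $L_S$ the residual conditions are vacuous or already implied. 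Equivalently, and more cleanly, I would argue that $z\in\tau_1(W)$ iff the Zariski closure of the one-parameter subgroup $\{\exp(tz):t\in\C\}$ is contained in $W$; that closure is a connected algebraic subtorus whose Lie algebra is the smallest rational subspace of $\C^n$ containing $z$, call it $\langle z\rangle_\Q\otimes\C$. Hence $\tau_1(W)$ is the union, over all rationally defined linear subspaces $L\subseteq\C^n$ whose associated subtorus $T_L=\exp(L)^{-}$ satisfies $T_L\subseteq W$, of the sets $L$ themselves—and since $W$ is Noetherian there are only finitely many maximal such $L$, so the union is finite. Each such $L$ is rationally defined by construction.

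The main obstacle is the passage from "$\exp(tz)\in W$ for all $t$" to "the algebraic-subtorus closure of the one-parameter subgroup lies in $W$," i.e.\ controlling the interaction between the transcendental map $\exp$ and the Zariski topology. This is exactly the content of Laurent's theorem (as cited, \cite[Lemme 3]{Lau}): the Zariski closure of a subgroup of an algebraic torus is a finite union of cosets of subtori. I would apply it to the subgroup $\exp(\C z)\subseteq\T_G^0$; since this subgroup is connected (being the continuous image of $\C$) and contains $1$, its Zariski closure is a single connected subtorus $T_z$, and $T_z\subseteq W$ iff $z\in\tau_1(W)$ forces $\exp(tz)\in W$ for all $t$—which is immediate—while conversely $\exp(\C z)\subseteq W$ and $W$ closed give $T_z\subseteq W$. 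The subtorus $T_z$ is $\exp$ of a rational subspace, so finiteness of the collection of possible $T_z$ (again by Noetherianity of $W$) yields the finite union of rational linear subspaces. I would then note the decomposition transports back through the chosen splitting $\T_G=\T_G^0\times F$ without change, since $\tau_1$ sees only $\T_G^0$, completing the proof.
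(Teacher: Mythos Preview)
Your exponential-sum paragraph \emph{is} the paper's proof, just not organized as cleanly. The paper reduces to a single hypersurface $W=V(f)$ with $f=\sum_{u\in S}c_u t^u$, notes that $z\in\tau_1(V(f))$ iff $\sum_{u\in S}c_u e^{\langle u,z\rangle t}\equiv 0$ in $t$, and then invokes linear independence of distinct exponentials to obtain $\tau_1(V(f))=\bigcup_{p\in\cP}L(p)$, where $\cP$ is the finite set of partitions of $S$ whose blocks each have coefficient sum zero, and $L(p)=\{z:\langle u-v,z\rangle=0\text{ for }u,v\text{ in the same block}\}$. Your description in terms of ``which of the finitely many integer linear forms $\langle m_j-m_{j'},\cdot\rangle$ vanish at $z$'' is exactly this; you just stopped short of writing down $\cP$ and $L(p)$ explicitly. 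Finiteness is then immediate, since $\cP$ is finite.

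Your ``cleaner'' second route via the Zariski closure $T_z=\overline{\exp(\C z)}$ is a genuine alternative, but it has a gap where you write ``since $W$ is Noetherian there are only finitely many maximal such $L$.'' Noetherianity bounds ascending chains of closed subsets of $W$, not the number of maximal subtori $W$ contains; you need a separate argument. One fix is to show directly that $\tau_1(W)$ is Zariski closed in $\C^n$ (expand each defining $f(\exp(tz))$ as a power series in $t$; vanishing of all Taylor coefficients gives countably many polynomial equations in $z$, and Noetherianity of $\C[z_1,\dots,z_n]$ reduces to finitely many), after which a closed cone that is a union of linear subspaces must be a finite such union. But at that point you have essentially recovered the first argument. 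Also, what you actually use about $T_z$ is not Laurent's Lemme~3 (which is the multiplicative cousin of the very statement you are proving) but the more basic fact that a closed connected subgroup of $(\C^*)^n$ is a subtorus, with Lie algebra the smallest rationally defined subspace containing $z$; cite that instead.
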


\begin{proof}
Set $n=b_1(G)$.  The coordinate ring of the complex torus 
$\T_G^0=(\C^*)^n$ is the Laurent polynomial ring in $n$ 
variables, $\C\Z^n= \C [t_1^{\pm 1}, \dots, t_n^{\pm 1}]$. 
It is enough to verify our claim for $W=V(f)$, where 
$f= \sum_{u\in S} c_u t_1^{u_1}\cdots t_n^{u_n}$, the support 
$S\subseteq \Z^n$ is finite, and $c_u\ne 0$, for all $u\in S$. 
For a fixed $z\in \C^n$, $z\in \tau_1(W)$ if and only if the 
analytic function 
\begin{equation}
\label{eq=sumexp}
\sum_{u\in S} c_u e^{\langle u,z \rangle t}
\end{equation}
vanishes identically in $t$.

In turn, this condition is easy to check, by using the well-known 
fact that the exponential functions $e^{ty_1},\dots , e^{ty_r}$ are linearly 
independent, provided $y_1,\dots, y_r$ are all distinct. Define $\cP$ to be
the set of partitions $p=S_1 \coprod \dots \coprod S_k$ of $S$, 
having the property that $\sum_{u\in S_i} c_u =0$, for $i=1, \dots, k$. 
For $p\in \cP$, define the rational linear subspace $L(p)\subseteq \C^n$ by 
\[
L(p)= \{ z\in \C^n \mid \langle u-v, z\rangle =0, 
\; \forall u,v\in S_i, \; \forall 1\le i\le k \}. 
\]
It is straightforward to check that $\tau_1(W)= \bigcup_{p\in \cP} L(p)$, 
by grouping terms in \eqref{eq=sumexp}.
\end{proof}

\subsection{Proof of Theorem \ref{thm=bnew}}
\label{ss42}
Let $G$ be a (finitely generated) $1$-formal group. 

Part \eqref{bn1}. Theorem \ref{thm=tcfintro} guarantees that 
$\tau_1(\V_k(G))=\R_k(G)$. By Lemma \ref{lem=elau}, all 
irreducible components of $\R_k(G)$ are rationally 
defined linear subspaces of $H^1(G,\C)$. 

Part \eqref{bn2}. Let $\R_k(G)=\bigcup_a L_a$ be the irreducible 
decomposition from Part \eqref{bn1}.  Then $\exp(L_a)$ 
is a connected subtorus of $\T_G$, for all $a$, and 
$T_1(\exp(L_a))= L_a$.  Now let $\{ \V^{\alpha}_k \}_{\alpha}$ 
be the irreducible components of $\V_k(G)$ containing $1$. 
By Theorem \ref{thm=tcfintro}, 
$\bigcup_{\alpha} \V^{\alpha}_k= \bigcup_a \exp(L_a)$, near $1$. 
Hence, each component $\V^{\alpha}_k(G)$ is a connected 
subtorus of $\T_G$. 

Part \eqref{bn3}. Using again Theorem \ref{thm=tcfintro}, 
we deduce that $\R_k(G)= TC_1(\V_k(G))= 
\bigcup_{\alpha} T_1(\V_k^{\alpha})$. 
By \cite[13.1]{Hu}, this gives the decomposition 
into irreducible components of $\R_k(G)$.

\subsection{Formal realizability of cohomology rings}
\label{ss43}

Theorem \ref{thm=bnew} indicates that the $1$-formality property 
of a group imposes severe restrictions on its resonance varieties 
in degree one.

\begin{example}
\label{ex=eucl}
Let $K$ be the finite, $2$-dimensional CW-complex associated to 
the following group presentation, with commutator-relators:
\[
G=\langle x_1, x_2, x_3, x_4 \mid (x_1, x_2)\, , 
(x_1, x_4)\cdot (x_2^{-2}, x_3)\, , 
(x_1^{-1}, x_3)\cdot (x_2, x_4) \rangle\, .
\]
The dual of the cup-product, $\partial_K\colon H_2(K,\C) \to 
\bigwedge^2H_1(K,\C)$, is given by
\begin{equation}
\label{eq=dualcup}
\left \{
\begin{array}{lcl}
\partial_K f_1 & = & e_1\wedge e_2\, ,\\
\partial_K f_2 & = & e_1\wedge e_4 - 2e_2\wedge e_3\, ,\\
\partial_K f_3 & = & -e_1\wedge e_3 + e_2\wedge e_4\, ,
\end{array}
\right.
\end{equation}
where $\{ e_i\}$ are the $1$-cells of $K$, and $\{ f_j\}$ 
the $2$-cells.  It follows from \eqref{eq=dualcup} that 
$\R_1(K)=\R_1(G)$ is given by the equation $x_1^2-2x_2^2=0$, 
where $x=(x_1,x_2,x_3,x_4)\in \C^4$ are the coordinates corresponding 
to the canonical $\Q$-structure of $H^1(K,\C)=H^1(G,\C)$.
\end{example}

Plainly, the rationality property from Theorem \ref{thm=bnew}\eqref{bn1} 
is violated by the resonance variety $\R_1(G)$ from the previous 
example.  This leads to the following corollary. 

\begin{corollary}
\label{cor=notreal}
Let $K$ be the CW-complex defined above.  There is no 
formal CW-complex $M$ with finite $1$-skeleton such that 
$H^{\le 2}(M,\Q)\cong H^{\le 2}(K,\Q)$, as graded rings.  
\end{corollary}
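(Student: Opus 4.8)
The plan is to argue by contradiction, using the obstruction to $1$-formality encoded in Theorem \ref{thm=bnew}\eqref{bn1}. Suppose there were a formal CW-complex $M$ with finite $1$-skeleton and a graded ring isomorphism $H^{\le 2}(M,\Q)\cong H^{\le 2}(K,\Q)$. First I would observe that the degree-one resonance variety $\R_1$ depends only on the degree-$2$ truncation of the cohomology ring: indeed, by Remark \ref{rem=murelres}, the varieties $\R_k(\cup_M,\theta)$ depend only on the corestriction of the cup-product $\cup_M\colon \bigwedge^2 H^1\to H^2$ to its image, and coincide with the relative resonance varieties $\R^1_k(M,\theta)$ from \eqref{eq:resvarx}; specializing to $\theta=\id_{\gl_1(\C)}$ gives that $\R_1(M)$ is determined by $H^{\le 2}(M,\C)$ as a graded algebra. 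Hence the assumed isomorphism (after extending scalars to $\C$) yields a $\Q$-linear isomorphism $H^1(M,\C)\cong H^1(K,\C)$ carrying $\R_1(M)$ onto $\R_1(K)$.

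Next I would invoke formality: since $M$ is formal with finite $1$-skeleton, Sullivan's theorem (cited in Section \ref{subsec=formal}) gives that $G'=\pi_1(M)$ is $1$-formal. Applying Theorem \ref{thm=bnew}\eqref{bn1} to $G'$, every irreducible component of $\R_1(M)=\R_1(G')$ is a linear subspace of $H^1(M,\C)$ defined over $\Q$. Transporting along the $\Q$-defined isomorphism above, every irreducible component of $\R_1(K)$ would then be a rationally defined linear subspace of $H^1(K,\C)$ (with respect to the canonical $\Q$-structure). But Example \ref{ex=eucl} computes $\R_1(K)=\R_1(G)$ explicitly as the hypersurface $\{x_1^2-2x_2^2=0\}\subseteq \C^4$. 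Over $\C$ this factors as a union of two hyperplanes $\{x_1-\sqrt{2}\,x_2=0\}$ and $\{x_1+\sqrt{2}\,x_2=0\}$, which are its irreducible components; neither is defined over $\Q$, since the Galois automorphism $\sqrt{2}\mapsto-\sqrt{2}$ interchanges them. This contradicts the rationality conclusion, completing the proof.

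The only genuinely substantive point to nail down is that $\R_1$ is insensitive to the ambient space beyond its $2$-truncated cohomology ring, so that a ring isomorphism of truncations forces an isomorphism of resonance varieties compatible with the $\Q$-structures; this is exactly what Remark \ref{rem=murelres} provides, so the argument is short. The remaining ingredients — that formality of $M$ implies $1$-formality of $\pi_1(M)$, and the explicit description of $\R_1(K)$ — are already in hand. I therefore expect no serious obstacle; the proof is essentially an assembly of Theorem \ref{thm=bnew}\eqref{bn1}, Remark \ref{rem=murelres}, and the computation in Example \ref{ex=eucl}.
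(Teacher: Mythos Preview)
Your proposal is correct and follows the same approach as the paper: the paper simply observes (in the sentence preceding the corollary) that the rationality property from Theorem~\ref{thm=bnew}\eqref{bn1} is violated by $\R_1(K)=\{x_1^2-2x_2^2=0\}$, and the corollary is immediate. You have spelled out the details---that $\R_1$ depends only on the $\Q$-truncation $H^{\le 2}$ via Remark~\ref{rem=murelres}, that formality of $M$ forces $1$-formality of $\pi_1(M)$, and that the two hyperplane components of $\R_1(K)$ are exchanged by $\sqrt{2}\mapsto -\sqrt{2}$---but the argument is the intended one.
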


This corollary is in marked contrast with the following basic result in 
simply-connected rational homotopy theory.

\begin{theorem}[Sullivan \cite{S}] 
\label{thm=freal}
Let $H^{\bullet}$ be a connected, finite-dimensional, graded-commu\-tative 
algebra over $\Q$.  If $H^1=0$, there is a $1$-connected, finite, formal 
CW-complex $M$, such that $H^{\bullet}(M, \Q)\cong H^{\bullet}$, 
as graded $\Q$-algebras.
\end{theorem}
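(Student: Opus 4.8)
The plan is to invoke Sullivan's machinery of minimal models and spatial realization, and then cut the resulting (possibly infinite) space down to a finite subcomplex.

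First I would regard $H^{\bullet}$ as a commutative differential graded $\Q$-algebra with zero differential, $(H^{\bullet},0)$. Since $H^1=0$, this CDGA is $1$-connected; it is of finite type (in fact finite-dimensional); and it is formal, being its own cohomology. By Sullivan \cite{S}, it admits a minimal Sullivan model $\varphi\colon (\Lambda V, d)\isom (H^{\bullet},0)$, with $V=\bigoplus_{k\ge 2}V^k$ of finite type: each $V^k$ is finite-dimensional, by the standard inductive construction of the minimal model, using that $H^{\bullet}$ is $1$-connected and of finite type. Applying Sullivan's spatial realization functor to $(\Lambda V,d)$ produces a $1$-connected CW-complex $X$ of finite type whose Sullivan model is $(\Lambda V,d)$; hence $H^{\bullet}(X,\Q)\cong H^{\bullet}$ as graded $\Q$-algebras, and $X$ is formal (its minimal model is formal).

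Next I would pass to a finite subcomplex. If $H^k=0$ for all $k\ge 1$, take $M$ to be a point. Otherwise set $N=\max\{k : H^k\ne 0\}\ge 2$. Because $X$ has finite type, its $(N+1)$-skeleton $X^{(N+1)}$ is a finite complex, with $H^k(X^{(N+1)},\Q)\cong H^k(X,\Q)=H^k$ for $k\le N$. Working with rational cellular chains, $H_{N+1}(X^{(N+1)},\Q)$ is a finite-dimensional subspace of $C_{N+1}(X,\Q)$; since $H_{N+1}(X,\Q)=0$, this subspace equals $\im\bigl(\partial\colon C_{N+2}(X,\Q)\to C_{N+1}(X,\Q)\bigr)$, so one may choose a \emph{finite} set $E$ of $(N+2)$-cells of $X$ whose boundaries form a basis of $H_{N+1}(X^{(N+1)},\Q)$. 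Set $M=X^{(N+1)}\cup E$, a finite, $1$-connected subcomplex of $X$ of dimension at most $N+2$. A short computation with the cellular chain complex then gives $H_{N+1}(M,\Q)=0$, $H_{N+2}(M,\Q)=0$ (the boundary map on $\mathrm{span}(E)$ is injective), and $H_k(M,\Q)=H_k(X,\Q)$ for $k\le N$; dually, $H^{\bullet}(M,\Q)\cong H^{\bullet}$. In particular the inclusion $M\hookrightarrow X$ induces an isomorphism on rational cohomology, hence is a rational homotopy equivalence between $1$-connected spaces; as $X$ is formal, so is $M$. Thus $M$ is the required finite, $1$-connected, formal CW-complex.

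The minimal-model input and the cellular-homology bookkeeping in the truncation step are routine. The one point that requires genuine care—and the main technical obstacle—is that the spatial realization $X$ is only guaranteed to be of finite type, not finite: the algebra $H^{\bullet}$ may be rationally hyperbolic, as already happens for $H^{\bullet}(S^2\vee S^2,\Q)$, so $X$ typically has infinitely many cells. The existence of a \emph{finite} model therefore hinges on truncating $X$ above degree $N$ in a way that simultaneously preserves the full ring structure on $H^{\bullet}(-,\Q)$ and, via the rational equivalence $M\hookrightarrow X$, preserves formality.
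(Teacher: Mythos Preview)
The paper does not give its own proof of this theorem: it is stated as a classical result of Sullivan, attributed to \cite{S}, and serves only as a foil to Corollary~\ref{cor=notreal}. So there is no proof to compare against, and the question reduces to whether your argument stands on its own.

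It does. Your outline is a correct and standard route to Sullivan's realization theorem in the $1$-connected case. The two places worth tightening are these. First, Sullivan's spatial realization of $(\Lambda V,d)$ is a priori only a rational space; the claim that one may take $X$ to be a genuine $1$-connected CW-complex of finite type needs a sentence (e.g., build $X$ inductively over a rational Postnikov tower, or invoke the fact that any simply connected space with finite-type rational homology is rationally equivalent to a CW-complex with finitely many cells in each dimension). Second, in the truncation step you identify $H_{N+1}(X^{(N+1)},\Q)$ with the cycle group $Z_{N+1}\subset C_{N+1}(X,\Q)$; this is fine since $X^{(N+1)}$ has no $(N+2)$-cells, but it would be cleaner to say so explicitly rather than calling it ``a subspace of $C_{N+1}$''. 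The selection of a finite set $E$ of $(N+2)$-cells whose boundaries form a $\Q$-basis of $Z_{N+1}$ is legitimate because the cells of $X$ span $C_{N+2}(X,\Q)$ and $\partial_{N+2}$ has image exactly $Z_{N+1}$. The verification that $M\hookrightarrow X$ is a rational equivalence, hence that $M$ inherits formality and the correct ring structure, is carried out correctly.
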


\section{Alexander invariants of $1$-formal groups} 
\label{sec=alex}

Our goal in this section is to derive a relation between the 
Alexander invariant and the holonomy Lie algebra of a 
finitely generated, $1$-formal group, over a characteristic zero field $\K$.

\subsection{Alexander invariants} 
\label{subsec=alex} 
 
Let $G$ be a group.  Consider the exact sequence
\begin{equation} 
\label{eq=gsq}
\xymatrix{0 \ar[r]& G'/G''  \ar[r]^{j}& 
G/G''   \ar[r]^{p}&   G_{\ab}  \ar[r]&  0},
\end{equation}
where $G'=(G,G)$, $G''=(G',G')$ and $ G_{\ab}=G/G'$. 
Conjugation in $ G/G''$ naturally makes $G'/G''$ into a 
module over the group ring $\Z{G}_{\ab}$.   Following 
Fox \cite{Fox}, we call this module, 
\[
B_G= G'/G'',
\]
the {\em Alexander invariant}\/ of $G$.  If $G=\pi_1(M)$, where $M$ 
is a connected CW-complex, one has the following useful topological 
interpretation for the Alexander invariant. Let $M' \to M$ be 
the Galois cover corresponding to $G' \subset G$. Then
$B_G  \otimes \K=H_1(M',\K)$, and the action of $G_{\ab}$ 
corresponds to the action in homology of the group of
covering transformations.  

Now assume the group $G$ is finitely generated. Then 
$B_G  \otimes \K$ is a finitely generated module over the 
Noetherian ring $\K G_{\ab}$. Denote by $I \subset \K G_{\ab}$ 
the augmentation ideal and set $X:= G_{\ab} \otimes \K$. 
The $I$-adic completion $\widehat { B_G  \otimes \K}$ is 
a finitely generated module over 
$\widehat {\K G_{\ab}}\cong \K [[X]]$, the formal power series ring 
on $X$. Note that $\K [[X]]$ is also the $(X)$-adic completion 
of the polynomial ring $\K [X]$.

The above identification is induced by the  $\K$-algebra map
\begin{equation} 
\label{eq=e}
\exp\colon \K G_{\ab} \to \K [[X]] , 
\end{equation}
defined  by $\exp (a)=e^{a \otimes 1}$, for $a \in G_{\ab}$.  
After completion, 
we obtain an isomorphism of filtered  $\K$-algebras, 
$\widehat {\exp}\colon \widehat {\K G_{\ab}} 
\xrightarrow{\,\simeq\,} \K [[X]]$.

Another invariant associated to a finitely generated group $G$ 
is the {\em infinitesimal Alexander invariant}, $B_{\h (G)}$.   
By definition, this is the finitely generated graded module over $\K[X]$ 
with presentation matrix
\begin{equation} 
\label{eq=infalex}
\nabla:= \delta _3 + \id \otimes \partial _G\colon 
\K [X] \otimes\Big(\bigwedge\nolimits^3X \oplus Y\Big) \to 
\K [X] \otimes \bigwedge\nolimits^2X ,
\end{equation}
where $Y=H_2(G,\K)$ and $ \delta _3 (x \wedge y  \wedge z)
= x\otimes y   \wedge z -y \otimes x  \wedge z +z 
\otimes x \wedge y$; see \cite[Theorem 6.2]{PS2} for details on 
degrees.  As the notation indicates, the graded module $B_{\h (G)}$ 
depends only on the holonomy Lie algebra of $G$.

\subsection{Alexander invariant and Malcev completion} 
\label{subsec=expchange} 

Let $G$ be a finitely generated group, with Malcev Lie algebra 
$E= E_G$. We have an exact sequence of complete Lie algebras, 
\begin{equation} 
\label{eq=hsq}
\xymatrix{ 0 \ar[r]& \overline{E'}/\overline{E''}  
\ar[r]^{\iota} & E/\overline{E''}
\ar[r]^{\pi} &  E/\overline{E'}\cong X  \ar[r]&  0 } ,
\end{equation}
with $E/\overline{E'}\cong X $ and $\overline{E'}/\overline{E''}$
abelian. Here $\overline{(\cdot)}$ denotes closure with respect 
to the topology defined by the filtration. Both $ E/\overline{E''}$ 
and $E/\overline{E'}$ are endowed with the quotient
filtrations induced from $E$, and  $\overline{E'}/\overline{E''}$ 
carries the subspace filtration induced from $ E/\overline{E''}$. 
The adjoint action of $X$ on $E/\overline{E''}$ induces a 
$\K[[X]]$-module structure on $\overline{E'}/\overline{E''}$.

Theorem~3.5 from \cite{PS2} gives a commutative diagram of groups, 
\begin{equation}
\label{eq=k0diagram}
\xymatrix{
0\ar[r]& G'/G'' \ar^{j}[r] \ar^{\kk_0}[d] &  G/G''\ar^{\kk''}[d]  
\ar[r]^{p}&   G_{\ab}\ar^{\kk'}[d] \ar[r]&0 \\
0\ar[r]&  \eexp(\overline{E'}/\overline{E''})  \ar^{\iota}[r] 
&\eexp(E/\overline{E''}) 
\ar^{\pi}[r] &\eexp(X) \ar[r]&0
}
\end{equation}
where both $\kk'$ and $\kk''$ are Malcev completions.

\begin{lemma}
\label{lem=eq}
The map $\kk _0 \otimes \K \colon (G'/G'')\otimes \K \to 
\overline{E'}/\overline{E''}$ is $\exp$-linear, that is,
\[
(\kk _0 \otimes \K) (\alpha\cdot \beta)= \exp (\alpha)\cdot
(\kk _0 \otimes \K) (\beta)\, ,
\]
for $\alpha \in \K G_{\ab}$ and $\beta\in (G'/G'')\otimes \K$.
\end{lemma}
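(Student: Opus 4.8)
The statement is that the vertical map $\kk_0 \otimes \K$ in diagram \eqref{eq=k0diagram} intertwines the $\K G_{\ab}$-module structure on $(G'/G'')\otimes \K$ with the $\K[[X]]$-module structure on $\overline{E'}/\overline{E''}$, via the algebra map $\exp$ of \eqref{eq=e}. Since both module structures arise from \emph{conjugation} (on the group side) and the \emph{adjoint action} (on the Lie algebra side), my plan is to reduce everything to the commutativity of the bottom-left square in \eqref{eq=k0diagram}, combined with the Campbell--Hausdorff dictionary between conjugation and $\ad$. First I would make the module structures fully explicit: on $G'/G''$, for $a \in G_{\ab}$ lifted to $\tilde a \in G/G''$, the action of $a$ on $\beta \in G'/G''$ is $\tilde a \beta \tilde a^{-1}$; extending $\Z$-linearly and then $\K$-linearly gives the $\K G_{\ab}$-action on $(G'/G'')\otimes\K$. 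On $\overline{E'}/\overline{E''}$, the element $\pi^{-1}$ of a lift of $\xi \in X$ acts by $e^{\ad_{\tilde\xi}}$ where $\tilde\xi \in E/\overline{E''}$; equivalently, $\xi \in X$ itself (as a generator of $\K[[X]]$) acts by $\ad_{\tilde\xi}$, and $\exp(a)$ for $a \in G_{\ab}$ acts by $e^{\ad}$ of the corresponding element.

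\textbf{Key steps.} (1) It suffices to prove the identity for $\alpha = a$ a group element of $G_{\ab}$, i.e.\ to show $(\kk_0\otimes\K)(a\cdot\beta) = e^{\ad_{\kk'(a)}}\cdot (\kk_0\otimes\K)(\beta)$, because such $a$ generate $\K G_{\ab}$ as a $\K$-algebra (the general case follows by $\K$-bilinearity and multiplicativity of $\exp$, using that the target action is a $\K[[X]]$-module action). (2) Pick a lift $\tilde a \in G/G''$ of $a$. Conjugation by $\tilde a$ on $G'/G''$ is, under $\kk''$ and the commutativity of the right-hand and middle squares of \eqref{eq=k0diagram}, carried to conjugation by $\kk''(\tilde a)$ on $\eexp(\overline{E'}/\overline{E''})$ inside $\eexp(E/\overline{E''})$. (3) In a Malcev (complete, filtered) Lie algebra, conjugation in the group $\eexp(\cdot)$ by $\eexp(\xi)$ on a normal abelian subgroup corresponds, after passing to the underlying vector space, to the linear operator $e^{\ad_\xi}$ — this is the Campbell--Hausdorff formula \eqref{eq=ch} specialized to the situation where the subgroup is abelian (so all higher brackets among its elements vanish and $\eexp(v)\mapsto v$ is a group isomorphism onto its image as an additive group). (4) Finally, $\kk''(\tilde a)$ and $\eexp(\kk'(a))$ differ by an element of $\eexp(\overline{E'}/\overline{E''})$, which is abelian and acts trivially by conjugation on itself; hence $e^{\ad}$ of either lift gives the same operator on $\overline{E'}/\overline{E''}$, and since $\kk'$ is the Malcev completion $G_{\ab}\to \eexp(X)$, that operator is $e^{\ad_{\kk'(a)}} = \exp(a)$ acting on $\overline{E'}/\overline{E''}$. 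Chaining (2)--(4) with the commutativity of the left square ($\iota \circ \kk_0 = \kk'' \circ j$) yields the claim.

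\textbf{Main obstacle.} The delicate point is step (3): one must be careful that the adjoint action of $X$ on $E/\overline{E''}$ (which is how the $\K[[X]]$-structure on $\overline{E'}/\overline{E''}$ is defined in \S\ref{subsec=expchange}) really matches, after exponentiation, the conjugation action of $\eexp(E/\overline{E''})$ on its normal subgroup $\eexp(\overline{E'}/\overline{E''})$ — and that the series $e^{\ad_\xi} = \sum \ad_\xi^n/n!$ converges and is well-defined because the filtration is complete and $\ad_\xi$ is filtration-increasing, so that on each quotient $F_q/F_{q+1}$ only finitely many terms contribute. I would also need to check the base-point normalizations are compatible, i.e.\ that the identification of $E/\overline{E'}$ with $X = G_{\ab}\otimes\K$ used to define $\exp$ in \eqref{eq=e} is the same one implicit in \eqref{eq=hsq} and \eqref{eq=k0diagram}; this is where \cite[Theorem 3.5]{PS2} is invoked. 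The rest is formal bookkeeping with \eqref{eq=k0diagram}.
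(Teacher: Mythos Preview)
Your proposal is correct and follows essentially the same route as the paper's proof: reduce to group elements $a$, use that $\kk''$ is a group homomorphism to carry conjugation to conjugation, and then invoke the Campbell--Hausdorff identity $x\cdot y\cdot x^{-1}=e^{\ad_x}(y)$ in $\eexp(E/\overline{E''})$ (which the paper cites from Lazard) together with $\kk'\circ p(a)=p(a)\otimes 1$. Your step (4), making explicit why the choice of lift of $a$ is irrelevant, is a bit more detailed than the paper's one-line justification, but the argument is otherwise the same.
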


\begin{proof}  
It is enough to show that 
$\kk ''(a \cdot j(b)  \cdot a ^{-1})=e^{p(a) \otimes 1} \cdot \kk ''(j(b))$ 
for $a \in G/G''$ and $b \in G'/G''$. To check this equality, recall the 
well-known conjugation formula in exponential groups 
(see Lazard \cite{L}), which in our situation says
\[
xyx ^{-1}=\exp(\ad_x)(y) ,
\]
for $x,y \in \eexp(E/\overline{E''})$. 
Hence $\kk ''(a \cdot j(b)  \cdot a ^{-1})=e^{\ad_{\kk ''(a)}}(\kk ''(j(b)))$, 
which equals $e^{p(a) \otimes 1} \cdot \kk ''(j(b))$, since
$\kk ' \circ p(a)= p(a) \otimes 1$.
\end{proof}

Recall that  $B_G \otimes \K$ is a module over $\K G_{\ab}$, 
and $\overline{E'}/\overline{E''}$ is a module over 
$\widehat { \K[X]}=\K[[X]]$. 
Shift the Malcev filtration on $\overline{E'}/\overline{E''}$
by setting $F'_q\, \overline{E'}/\overline{E''} :=F_{q+2}\,  
\overline{E'}/\overline{E''}$, for each $q\ge 0$. 

\begin{prop} 
\label{lem=filt}
The $\K$-linear map
$\kk _0 \otimes \K\colon B_G \otimes \K \to \overline{E'}/\overline{E''}$ 
induces a filtered $\widehat{\exp}$-linear isomorphism between 
$\widehat { B_G \otimes \K}$, endowed with the 
filtration coming from the $I$-adic completion, and 
$\overline{E'}/\overline{E''}$, endowed with the shifted 
Malcev filtration $F'$.
\end{prop}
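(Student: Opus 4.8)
The plan is to upgrade the $\exp$-linear map $\kk_0 \otimes \K \colon B_G \otimes \K \to \overline{E'}/\overline{E''}$ from Lemma \ref{lem=eq} to a filtered isomorphism after completing the source. First I would make precise the two filtrations in play. On the source, $B_G \otimes \K$ is a finitely generated module over the Noetherian ring $\K G_{\ab}$, and its $I$-adic completion $\widehat{B_G \otimes \K}$ carries the filtration by powers of $I$; via the isomorphism $\widehat{\exp}\colon \widehat{\K G_{\ab}} \isom \K[[X]]$, the ideal $I$ goes to the maximal ideal $(X)$, so this is the $(X)$-adic filtration over $\K[[X]]$. On the target, the shifted Malcev filtration $F'_q\, \overline{E'}/\overline{E''} = F_{q+2}\, \overline{E'}/\overline{E''}$ is a complete, separated filtration by $\K[[X]]$-submodules (completeness is inherited from $E$; separatedness because the Malcev filtration on $E$ is separated). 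The key structural input is that $\overline{E'}/\overline{E''}$, with the shifted filtration, is the $(X)$-adic completion of its own associated graded, which by \cite[Theorem 6.2]{PS2} (cited around \eqref{eq=infalex}) is the infinitesimal Alexander invariant $B_{\h(G)}$, a finitely generated graded $\K[X]$-module; indeed $\gr_{F'}(\overline{E'}/\overline{E''}) \cong \gr$ of the analogous sequence for $\wh(G)$.

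Second, I would invoke $1$-formality of $G$. This gives a filtered Lie isomorphism $E_G \cong \wh(G)$, hence a compatible isomorphism of the exact sequences \eqref{eq=hsq} for $E_G$ and for $\wh(G)$, and therefore an isomorphism of filtered $\K[[X]]$-modules $\overline{E'}/\overline{E''} \cong \widehat{\overline{\h'}/\overline{\h''}}$, where the right side is the degree-completion of $B_{\h(G)}$, i.e.\ the $(X)$-adic completion of the finitely generated graded module $B_{\h(G)}$. So the target is, filtered-isomorphically, $\widehat{B_{\h(G)}}$ with its $(X)$-adic filtration.

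Third, I would run the completion argument. The map $\kk_0 \otimes \K$ is $\exp$-linear (Lemma \ref{lem=eq}) and sends $I$-powers into $F'$-levels — this compatibility needs a short check using that $\kk_0$ respects the lower central / Malcev filtrations and that $I$ is generated by $\{e^{a\otimes 1} - 1\}$, each of which acts on the target by $e^{\ad} - \id$, raising $F'$-level by one. Hence $\kk_0 \otimes \K$ is continuous for these topologies and extends to a filtered $\widehat{\exp}$-linear map $\widehat{\kk_0 \otimes \K}\colon \widehat{B_G \otimes \K} \to \overline{E'}/\overline{E''}$. To see it is an isomorphism, I would pass to associated graded modules: on the graded level the induced map $\gr(B_G \otimes \K) \to \gr_{F'}(\overline{E'}/\overline{E''}) = B_{\h(G)}$ is precisely the comparison map between the ($I$-adic) graded of the Alexander invariant and the infinitesimal Alexander invariant. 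One shows this graded map is an isomorphism directly: both are presented, over the graded ring $\gr(\K G_{\ab}) = \K[X]$ respectively $\K[X]$, by matrices that correspond under $\exp$ up to higher-order terms — the Fox-calculus presentation of $B_G$ reduces, after grading, to the presentation \eqref{eq=infalex} of $B_{\h(G)}$ (this is again essentially \cite[Theorem 6.2]{PS2}, using $1$-formality to identify the quadratic data, i.e.\ $\partial_G$). An isomorphism on associated graded between modules that are complete and separated for their respective filtrations forces an isomorphism after completion, which is exactly the claim.

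The main obstacle I expect is the third step's bookkeeping: carefully matching the $I$-adic filtration on $B_G \otimes \K$ with the shifted Malcev filtration $F'$ under $\kk_0$, so that the map is strictly filtered (not merely filtered) and hence induces an isomorphism on $\gr$. The degree shift by $2$ is the delicate point — it reflects that $B_G$ lives in "bracket-length $\ge 2$" of $E_G$, so the $q$-th $I$-adic piece matches $F_{q+2}$, not $F_q$ — and one must check this on generators and relators of the Fox presentation rather than hand-wave it. Once the filtrations are correctly aligned, the rest is the standard completed-module argument (an $\exp$-linear, strictly filtered map inducing an iso on associated graded of complete separated filtered modules is a filtered isomorphism), combined with the identification of $\gr_{F'}(\overline{E'}/\overline{E''})$ with $B_{\h(G)}$ supplied by $1$-formality and \cite{PS2}.
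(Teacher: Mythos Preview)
Your proposal has a genuine gap: you invoke $1$-formality of $G$ as a hypothesis (your ``Second'' step), but Proposition~\ref{lem=filt} does \emph{not} assume $1$-formality. Look at the setup of \S\ref{subsec=expchange}: $G$ is an arbitrary finitely generated group, and $E=E_G$ is its Malcev Lie algebra. The $1$-formality hypothesis enters only later, in Theorem~\ref{thm=complalex}, where it is used to identify $\overline{E'}/\overline{E''}$ with $\widehat{\h'/\h''}$. Your route through $B_{\h(G)}$ therefore proves strictly less than what is claimed.

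Even if you were willing to weaken the statement, your argument for the associated-graded isomorphism is not self-contained: you assert that the map $\gr_I(B_G\otimes\K)\to B_{\h(G)}$ is an isomorphism ``directly'' by comparing Fox-calculus and infinitesimal presentations, but this identification is itself a nontrivial consequence of the proposition you are trying to prove (combined with $1$-formality), so the reasoning is close to circular. The paper avoids all of this by working entirely inside the commutative diagram~\eqref{eq=k0diagram}: since $\kk''\circ j=\iota\circ\kk_0$, it suffices to check that $\gr(j)\otimes\K$, $\gr(\kk'')$, and $\gr_F(\iota)$ are isomorphisms in the relevant degrees. The first is Massey's theorem \cite{Ma}, the second is the defining property~\eqref{eq=griso} of the Malcev completion, and the third follows from filtered exactness of~\eqref{eq=hsq}. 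None of these steps uses $1$-formality, and together they give the graded isomorphism you need without ever mentioning $B_{\h(G)}$. Your filtration bookkeeping in the first and third steps is on the right track; what you are missing is precisely this Massey--Malcev factorization of $\gr(\kk_0\otimes\K)$.
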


\begin{proof} 
We start by proving that
\begin{equation} 
\label{eq=kkinc}
\kk _0  \otimes \K\,(I^q B_G \otimes \K) \subset 
F'_q \, \overline{E'}/\overline{E''}
\end{equation}
for all $q \geq 0$. First note that $\overline{E'}/\overline{E''}= 
F_2 \overline{E'}/\overline{E''} $, by filtered exactness of 
\eqref{eq=hsq}, together with \eqref{eq=griso}.
Next, recall that $\exp(I) \subset (X)$. 
Finally, note that $(X)^rF_s\, \overline{E'}/\overline{E''} 
\subset F_{r+s}\,\overline{E'}/\overline{E''} $
for all $r,s$.   These observations, together with 
the $\exp$-equivariance property from Lemma \ref{lem=eq}, 
establish the claim.

In view of  \eqref{eq=kkinc}, $\kk _0  \otimes \K$ induces 
a filtered, $\widehat{\exp}$-linear  map  
from $\widehat { B_G \otimes \K}$ to $\overline{E'}/\overline{E''}$.  
We are left with checking  this map is a filtered isomorphism. 
For that, it is enough to show
\begin{equation} 
\label{eq=g1}
\gr^q(\kk _0  \otimes \K)\colon \gr^q_I( B_G \otimes \K) 
\longrightarrow \gr_F^{q+2}(\overline{E'}/\overline{E''})
\end{equation}
is an isomorphism, for each $q \geq 0$. 

Recall from \eqref{eq=k0diagram} that 
$\kk '' \circ j= \iota \circ \kk _0$. 
By a result of W.~Massey \cite[pp.~400--401]{Ma}, the 
map $j\colon G'/G''\to G/G''$ induces isomorphisms
\begin{equation} 
\label{eq=g2}
\gr^q(j)  \otimes \K \colon \gr^q_I( B_G \otimes \K) 
\xrightarrow{\,\simeq\,}   \gr^{q+2}  (G / G '') \otimes \K  
\end{equation}
for all $q \geq 0$. Since $\kk''$ is a 
Malcev completion, it induces isomorphisms
\begin{equation} 
\label{eq=g3}
\gr^q(\kk '')\colon  \gr^{q  }  (G / G '') \otimes 
\K \xrightarrow{\,\simeq\,} \gr_F^{q}(E/\overline{E''})   
\end{equation}
for all $q \geq 1$.  Finally, the inclusion map $\iota$ induces isomorphisms 
$\gr_F^q(\iota)$,  for all $q\ge 2$, again by filtered exactness 
of \eqref{eq=hsq}, since $X$ is abelian. This finishes the proof.
\end{proof}

\subsection{Alexander invariants of $1$-formal groups}
\label{ss53}

The next result is a new, explicit manifestation of D.~Sullivan's 
general philosophy, according to which the algebraic topology 
of a formal space in characteristic zero is determined by the 
cohomology ring.

\begin{theorem} 
\label{thm=complalex}
Let $G$ be a $1$-formal group. Then the $I$-adic completion of 
the Alexander invariant, $\widehat { B_G  \otimes \K}$, 
is isomorphic to the $(X)$-adic completion of the 
infinitesimal Alexander invariant, $\widehat { B_{\h (G)}}$,
by a filtered $\widehat {\exp}$-linear isomorphism.
\end{theorem}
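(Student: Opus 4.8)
The plan is to transfer the structural results about Malcev Lie algebras, already obtained in Proposition \ref{lem=filt}, across the $1$-formality isomorphism $E_G\cong \wh(G)$, and then identify the resulting filtered module on the holonomy side with the $(X)$-adic completion of the infinitesimal Alexander invariant $B_{\h(G)}$. The key observation is that Proposition \ref{lem=filt} already produces, for an \emph{arbitrary} finitely generated group $G$, a filtered $\widehat{\exp}$-linear isomorphism
\[
\widehat{B_G\otimes\K}\ \isom\ \overline{E'}/\overline{E''}
\]
where the target carries the shifted Malcev filtration $F'$ and the $\K[[X]]$-module structure coming from the adjoint action of $X\cong E/\overline{E'}$. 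So the entire problem reduces to computing $\overline{E'}/\overline{E''}$, as a filtered $\K[[X]]$-module, when $E=E_G$ is assumed $1$-formal, i.e.\ $E\cong\wh(G)$ as filtered Lie algebras.

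First I would invoke $1$-formality to replace $E_G$ by $\wh(G)$, the degree completion of the quadratic Lie algebra $\h=\h(G)$; since the isomorphism is filtered, it carries the derived-series quotient $\overline{E'}/\overline{E''}$ to the corresponding object $\overline{\wh'}/\overline{\wh''}$ built from $\wh(G)$, compatibly with the $\K[[X]]$-module structures and the filtrations (the closed derived subalgebras of the completion being the closures of the images of the derived subalgebras of $\h$). Because $\h$ is graded and generated in degree $1$ with relations in degree $2$, the Lie ideal $\h'=[\h,\h]$ and the second derived ideal $\h''$ are graded, and $\h'/\h''$ is a graded module over the polynomial ring $\K[X]$ (with $X=\h^1=H_1(G,\K)$ acting by the adjoint/bracket operation). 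The passage to completions then gives $\overline{\wh'}/\overline{\wh''}\cong\widehat{\h'/\h''}$, the $(X)$-adic completion, with the shifted grading matching the shifted Malcev filtration $F'$. So it remains to identify the graded $\K[X]$-module $\h'/\h''$ with the infinitesimal Alexander invariant $B_{\h(G)}$.

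That last identification is the concrete homological-algebra step: one has to produce the presentation \eqref{eq=infalex} for $\h'/\h''$. This is a known computation (it is exactly the content of \cite[Theorem 6.2]{PS2}, cited just after \eqref{eq=infalex}), so I would cite it rather than redo it. The mechanism is standard: write the Chevalley--Eilenberg-type beginning of a free resolution of the trivial module over the quadratic Lie algebra $\h$; the abelianization $\h/\h'\cong X$ is presented by $X\otimes\bigwedge^2 X\to X\otimes X\to X$, and the piece computing $\h'/\h''$ as a $\K[X]$-module is governed by the Koszul differential $\delta_3$ together with the relations of $\h$, which are $\im(\partial_G)\subset\bigwedge^2 X$ — giving precisely the map $\nabla=\delta_3+\id\otimes\partial_G$ of \eqref{eq=infalex}. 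Matching degrees: degree $q$ of $\h'/\h''$ sits in $F'_{q}=F_{q+2}$, hence the $(X)$-adic filtration on $B_{\h(G)}$ corresponds to $F'$ under the isomorphism.

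Assembling the chain of filtered isomorphisms
\[
\widehat{B_G\otimes\K}\ \isom\ \overline{E'}/\overline{E''}\ \isom\ \overline{\wh'}/\overline{\wh''}\ \isom\ \widehat{\h'/\h''}\ \isom\ \widehat{B_{\h(G)}},
\]
and checking that each is $\widehat{\exp}$-linear over $\widehat{\K G_{\ab}}\cong\K[[X]]$ — the first from Proposition \ref{lem=filt}, the middle two being $\K[[X]]$-linear (indeed the $\widehat{\exp}$ becomes the identity on the holonomy side, where the base is already $\K[[X]]$), the last from \cite{PS2} — completes the proof. The main obstacle I anticipate is bookkeeping rather than conceptual: one must be scrupulous that the \emph{closures} of derived subalgebras in the completion behave well (i.e.\ $\overline{E'}/\overline{E''}$ really is the completion of $\h'/\h''$ and not something larger), that the shift $F'_q=F_{q+2}$ is the correct normalization matching both the $I$-adic degree on $B_G$ via Massey's isomorphism \eqref{eq=g2} and the internal grading of $B_{\h(G)}$, and that the $\K[[X]]$-module structures (adjoint action versus polynomial multiplication) are identified consistently across all four isomorphisms. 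None of this is deep, but it is where a careless argument would go wrong.
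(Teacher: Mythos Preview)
Your proposal is correct and follows essentially the same route as the paper's own proof: use Proposition~\ref{lem=filt} to identify $\widehat{B_G\otimes\K}$ with $\overline{E'}/\overline{E''}$, invoke $1$-formality to replace $E$ by $\wh(G)$ so that $\overline{E'}/\overline{E''}\cong\widehat{\h'/\h''}$, and then cite \cite[Theorem~6.2]{PS2} to identify $\h'/\h''$ as a graded $\K[X]$-module with $\coker(\nabla)=B_{\h(G)}$. The paper's argument is terser but structurally identical, and your flagged bookkeeping concerns (closures of derived ideals, the degree shift, matching module structures) are exactly the points the paper handles by referring to the graded exact sequence $0\to\h'/\h''\to\h/\h''\to X\to 0$ and to \cite{PS2}.
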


\begin{proof}
Set $\h:=\h(G)$. Consider the following sequence of Lie algebras, 
\[
\xymatrix{0\ar[r]& \h'/\h'' \ar[r]& \h/\h'' \ar[r]& \h/\h' \ar[r]& 0}. 
\]
Clearly, $\h/\h'$ and $\h'/\h''$ are abelian, and $\h/\h' \cong X$. 
Moreover, the sequence is exact in each degree.

Assign degree $q$ to $\bigwedge ^qX$ and degree $2$ to 
$Y$ in \eqref{eq=infalex}. Then $ \h' / \h ''$, viewed as a 
graded  $\K[X]$-module via the above exact sequence, 
is graded isomorphic to the  $\K[X]$-module $\coker (\nabla)$, 
see  \cite[Theorem 6.2]{PS2}. Taking $(X)$-adic completions, 
the claim follows from Proposition \ref{lem=filt}, since 
$\overline{E'}/\overline{E''}\cong \widehat{\h' / \h ''}$, by 
$1$-formality.
\end{proof}

\begin{remark}
\label{rem=altpf}
An alternative proof of Theorem \ref{thm=tcfintro}, in the case 
$\rho=\id_{\GL_1(\C)}$, based on Theorem \ref{thm=complalex}, 
goes as follows. Consider the Alexander invariant,
$B_G\otimes \C$, over $\C G_{\ab}$, and the infinitesimal Alexander 
invariant, $B_{\h(G)}$, over $\C[G_{\ab}\otimes \C]$. The varieties 
defined by the Fitting ideals of these two modules coincide, away 
from the origin, with $\V_k(G)$, and $\R_k(G)$ respectively. By 
base change and  Theorem \ref{thm=complalex}, the Fitting ideals are
identified in the $1$-formal case via $\widehat{\exp}$, upon completion. 
Since $\C[[X]]$ is faithfully flat over $\C \{ X\}$, the analytic germs of 
the corresponding Fitting loci are identified, via the local exponential 
isomorphism.
\end{remark}

\begin{remark}
\label{rem=arr}
Suppose $M$ is the complement of a hyperplane arrangement 
in $\C^{m}$, with fundamental group $G=\pi_1(M)$, and $\rho=\id_{\GL_1(\C)}$.  
In this case, Theorem \ref{thm=tcfintro} can be deduced from 
results of Esnault--Schechtman--Viehweg \cite{ESV} and 
Schechtman--Terao--Varchenko \cite{STV}.  In fact, one 
can show that there is a combinatorially defined open 
neighborhood $U$ of $0$ in $H^1(M, \C)$ with the 
property that  
$H^*(M,{}_{\rho}\C) \cong H^*(H^{\bullet}(M,\C),  \mu_z)$, 
for all $z\in U$,  where $\rho=\exp (z)$. 
A similar approach works as soon as $W_1 (H^1 (M,\C))=0$,  
in particular, for complements of arrangements 
of hypersurfaces in projective or affine space.   
For details, see \cite[Corollary 4.6]{DM}.
\end{remark}

The local statement from Theorem \ref{thm=tcfintro} is the 
best one can hope for, as shown by the following classical example.

\begin{example} 
\label{ex:knots}
Let $M$ be the complement in $S^3$ of a tame knot $K$. 
Since $M$ is a homology circle, it follows easily that $M$ is 
a formal space; therefore, its fundamental group,  
$G=\pi_1(M)$, is a $1$-formal group. Let $\Delta(t)\in \Z [t, t^{-1}]$ 
be the Alexander polynomial of $K$. It is readily seen that 
$\V_1(G)= \{ 1\} \coprod \Zero (\Delta)$ and $\R_1(G)= \{ 0\}$. 
Thus, if $\Delta(t)\not\equiv 1$, then $\exp(\R_1(G)) \ne \V_1(G)$. 

Even though the germ of $\V_1(G)$ at $1$ provides no information 
in this case, the global structure of $\V_1(G)$ is quite meaningful. 
For example, if $K$ is an algebraic knot, then  $\Delta(t)$ must be  
product of cyclotomic polynomials, as follows from work of 
Brauner and Zariski from the 1920s, see \cite{Mi}.
\end{example}

\begin{remark} 
\label{rem:curves}
Let $M$ be the complement in $\C^2$ of an 
algebraic curve, with fundamental group $G=\pi_1(M)$, 
and let $\Delta(t)\in \Z [t, t^{-1}]$ be the Alexander polynomial 
of the total linking cover, as defined by Libgober; see \cite{Li94} 
for details and references. It was shown in \cite{Li94} that 
all the roots of $\Delta(t)$ are roots of unity. This 
gives restrictions on which finitely presented groups can be 
realized as fundamental groups of plane curve complements. 

Let $\Delta^G \in \Z [t_1^{\pm 1},\dots, t_n^{\pm 1}]$ be the
multivariable Alexander polynomial of an arbitrary quasi-projective 
group.  Starting from Theorem \ref{thm=posobstr}\eqref{a2}, we 
prove in \cite{DPS-codone} that $\Delta^G$ must have a single 
essential variable, if $n\ne 2$. Examples from \cite{P07} show 
that this new obstruction efficiently detects non-quasi-projectivity 
of (local) algebraic link groups.  Note that all roots of the 
one-variable (local) Alexander polynomial $\Delta (t)$ of 
algebraic links are roots of unity; see \cite{Mi}.
\end{remark}

\section{Position and resonance obstructions} 
\label{sec=posobs}

In this section, we prove Theorem \ref{thm=posobstr} from 
the Introduction. The basic tool is a result of Arapura \cite{A}, 
which we start by recalling.

\subsection{Arapura's theorem}
\label{subsec=arapura}
First, we establish some terminology.   By a {\em curve}\/  
we mean a smooth, connected, complex algebraic variety of 
dimension $1$.  A  curve $C$ admits a canonical compactification 
$\oC$, obtained by adding a finite number of points. 

Following  \cite[p.~590]{A}, we say a map  $f\colon M \to C$ from 
a connected, quasi-compact K\"{a}hler manifold $M$ to a 
curve $C$ is {\em admissible}\/ if $f$ is holomorphic
and surjective, and has a holomorphic, surjective extension with 
connected fibers, $\overline{f}\colon \oM \to \oC$, where $\oM$ 
is a smooth compactification, obtained by adding divisors with 
normal crossings.

With these preliminaries, we can state Arapura's result 
\cite[Proposition V.1.7]{A}, in a slightly modified form, 
suitable for our purposes.

\begin{theorem} 
\label{thm:vadm}
Let $M$ be a connected, quasi-compact K\"{a}hler manifold. 
Denote by $\{ \V^{\alpha}\}_{\alpha}$ the set of irreducible components 
of $\V_1(\pi_1(M))$ containing $1$. If  $\dim \V^{\alpha}>0$, then 
the following hold.

\begin{enumerate}
\item \label{va1} 
There is an admissible map, 
$f_{\alpha}\colon M \to C_{\alpha}$, where $C_{\alpha}$ 
is a smooth curve with $\chi (C_{\alpha})<0$, 
such that  
\[
\V^{\alpha}=f_{\alpha}^* \T_{\pi_1(  C_{\alpha} ) }
\]
and $(f_{\alpha}) _{\sharp}\colon \pi_1(M) \to \pi_1(  C_{\alpha}  )$ 
is surjective.

\item \label{va2}
There is an isomorphism
\[
H^1(M, {}_{f_{\alpha}^* {\rho}}\C)\cong H^1( C_{\alpha}, {}_{\rho}\C) ,
\]
for all except finitely many local systems 
${\rho} \in \T _{\pi_1(  C_{\alpha} )}$.
\end{enumerate}
\end{theorem}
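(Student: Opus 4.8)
The plan is to derive this as a reformulation of Arapura's structure theorem for cohomology support loci, \cite[Proposition V.1.7]{A} (which itself rests on the work of Beauville, Green--Lazarsfeld and Simpson); the real content is the passage from Arapura's statement to the precise smooth-curve form needed here. Arapura attaches to each positive-dimensional component $W$ of $\V_1(\pi_1(M))$ an admissible map $f\colon M\to C$ onto a curve with $W=f^{*}\T_{\pi_1(C)}$, admissibility (holomorphy, surjectivity, and a holomorphic surjective extension $\overline{f}\colon\oM\to\oC$ with connected fibers) being arranged by passing to a normal-crossing compactification of $M$ and taking Stein factorization. For a component $\V^{\alpha}$ containing $1$, one knows from Arapura's general theorem that $\V^{\alpha}$ is a coset of a subtorus, hence (containing $1$) is itself a subtorus, so it is determined by its germ at $1$; I would use this to replace any orbifold structure on the base by the underlying honest curve $C_{\alpha}$, obtaining $\V^{\alpha}=f_{\alpha}^{*}\T_{\pi_1(C_{\alpha})}$ for a genuine holomorphic $f_{\alpha}\colon M\to C_{\alpha}$. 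Surjectivity of $(f_{\alpha})_{\sharp}$ on $\pi_1$ then follows from connectedness of the general fiber: after deleting the finitely many singular fibers, $f_{\alpha}$ becomes a locally trivial bundle over a punctured curve $C_{\alpha}^{\circ}$ with connected fiber, forcing $\pi_1$ of the total space to surject onto $\pi_1(C_{\alpha}^{\circ})$; since removing a divisor from $\oM$ (resp.\ points from $C_{\alpha}$) can only enlarge the fundamental group, a diagram chase yields $\pi_1(M)\twoheadrightarrow\pi_1(C_{\alpha})$.

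For part \eqref{va2} I would run the Leray spectral sequence for $f_{\alpha}$, made legitimate in the non-proper setting through the admissible extension $\overline{f_{\alpha}}$. Connectedness of the fibers gives $R^{0}(f_{\alpha})_{*}\C\cong\C$, and the projection formula gives $R^{q}(f_{\alpha})_{*}({}_{f_{\alpha}^{*}\rho}\C)\cong{}_{\rho}\C\otimes R^{q}(f_{\alpha})_{*}\C$; the resulting five-term exact sequence is
\[
0\to H^{1}(C_{\alpha},{}_{\rho}\C)\to H^{1}(M,{}_{f_{\alpha}^{*}\rho}\C)\to H^{0}\bigl(C_{\alpha},{}_{\rho}\C\otimes R^{1}(f_{\alpha})_{*}\C\bigr)\to H^{2}(C_{\alpha},{}_{\rho}\C).
\]
For $\rho\ne 1$ the last term vanishes (Poincar\'{e} duality if $C_{\alpha}$ is compact, dimension reasons if not), so the cokernel of the injection $H^{1}(C_{\alpha},{}_{\rho}\C)\hookrightarrow H^{1}(M,{}_{f_{\alpha}^{*}\rho}\C)$ embeds into $H^{0}(C_{\alpha},{}_{\rho}\C\otimes R^{1}(f_{\alpha})_{*}\C)$. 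This last group is nonzero only when $\rho^{-1}$ occurs among the finitely many rank-one Jordan--H\"older constituents of $R^{1}(f_{\alpha})_{*}\C$; excluding those $\rho$ (together with $\rho=1$), the asserted isomorphism follows.

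The numerical claim $\chi(C_{\alpha})<0$ then comes from part \eqref{va2} together with the Euler identity on curves. Since $(f_{\alpha})_{\sharp}$ is surjective, $f_{\alpha}^{*}$ is injective, so $\V^{\alpha}=f_{\alpha}^{*}\T_{\pi_1(C_{\alpha})}$ being positive-dimensional forces $\dim\T_{\pi_1(C_{\alpha})}>0$; a generic $\rho\in\T_{\pi_1(C_{\alpha})}$ then lies in $\V^{\alpha}$, so $H^{1}(M,{}_{f_{\alpha}^{*}\rho}\C)\ne 0$, hence $H^{1}(C_{\alpha},{}_{\rho}\C)\ne 0$ by part \eqref{va2}. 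As a generic rank-one local system on a curve has $H^{0}=H^{2}=0$, the identity $\dim H^{0}-\dim H^{1}+\dim H^{2}=\chi(C_{\alpha})$ gives $\dim H^{1}(C_{\alpha},{}_{\rho}\C)=-\chi(C_{\alpha})>0$. In particular this rules out $C_{\alpha}$ being $\PP^{1}$, $\C$, $\C^{*}$, or an elliptic curve, all of which have $\chi\ge 0$ and $\V_1=\{1\}$, and so cannot produce a positive-dimensional component.

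The step I expect to be the real obstacle is the first one: faithfully extracting the honest smooth-curve statement from Arapura's (possibly orbifold) pencil construction, and being precise about why restriction to components through the identity allows one to discard the orbifold data while retaining both the pull-back description $\V^{\alpha}=f_{\alpha}^{*}\T_{\pi_1(C_{\alpha})}$ and the strict inequality $\chi(C_{\alpha})<0$. The Leray computation and the $\pi_1$-surjectivity argument, by contrast, are routine once the admissible model is in place.
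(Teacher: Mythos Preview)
The paper does not prove this theorem at all: it is stated there as a restatement of Arapura's result \cite[Proposition~V.1.7]{A}, ``in a slightly modified form, suitable for our purposes,'' and is used as a black box. So there is no paper-proof to compare against; what you have written is a reconstruction of Arapura's argument rather than a rediscovery of something done in this paper.

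That said, your sketch is essentially sound and follows the standard route. A couple of places deserve tightening. In the Leray step, $R^{1}(f_{\alpha})_{*}\C$ is only a constructible sheaf on $C_{\alpha}$, not a local system: it is locally constant over the locus $C_{\alpha}^{\circ}$ of smooth fibers but may have skyscraper contributions over the discriminant. Your Jordan--H\"older argument should therefore be phrased for the local system $R^{1}(f_{\alpha}^{\circ})_{*}\C$ on $C_{\alpha}^{\circ}$, together with the observation that the skyscraper part contributes to $H^{0}$ only through a finite-dimensional space that is killed by any nontrivial $\rho$ after twisting (or, more cleanly, work directly with the decomposition theorem / perverse truncation as Arapura does). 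Also, in the $\pi_1$-surjectivity step your ``diagram chase'' is correct in spirit but you should be explicit that the inclusion $M\setminus f_{\alpha}^{-1}(B)\hookrightarrow M$ and $C_{\alpha}^{\circ}\hookrightarrow C_{\alpha}$ both induce surjections on $\pi_1$, since that is exactly what makes the chase go through. Your derivation of $\chi(C_{\alpha})<0$ from part~\eqref{va2} and the Euler identity is fine.
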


When $M$ is compact, similar results to Arapura's were obtained 
previously by Beauville \cite{Beau2} and Simpson \cite{Sim}.
The closely related construction of regular mappings from 
an algebraic variety $M$ to a curve $C$ starting with suitable 
differential forms on $M$ goes back to Castelnuovo--de Franchis, 
see Catanese \cite{Cat}. When both $M$ and $C$ are compact, 
the existence of a non-constant holomorphic map $M \to C$ 
is closely related to the existence of an epimorphism 
$\pi_1(M) \surj \pi_1(C)$, see Beauville \cite{Beau1}
and Green--Lazarsfeld \cite{GL}. In the non-compact case, 
this phenomenon is discussed in Corollary V.1.9 from \cite{A}.

\subsection{Isotropic subspaces}
\label{subsec=isotropic}
Before proceeding, we introduce some notions which will 
be of considerable use in the sequel. 
Let $\mu \colon H^1 \wedge H^1 \to H^2$ be a $\C$-linear map, 
and $\R_k(\mu)\subset H^1$ be the corresponding resonance varieties, 
as defined in Remark \ref{rem=murelres}. One way to construct elements 
in these varieties is as follows.

\begin{lemma} 
\label{lem=linalg1}
Suppose $V \subset H^1$ is a linear subspace of dimension $k$. 
Set $i=\dim \im (\mu\colon V \wedge V \to H^2)$.
If $i<k-1$, then $V \subset \R_{k-i-1}(\mu)\subset \R_1(\mu)$.
\end{lemma}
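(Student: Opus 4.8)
The plan is to exhibit a subcomplex of the Aomoto complex $(H^{\bullet}(\mu), \mu_z)$ supported on $V$, for a generic $z \in V$, and to compute (a lower bound for) its first cohomology by a dimension count. First I would fix a generic element $z \in V$; the claim will be that for such $z$ the degree-one cohomology of $(H^{\bullet}(\mu), \mu_z)$ has dimension at least $k - i - 1$. Recall the complex is $H^0 \xrightarrow{\mu_z} H^1 \xrightarrow{\mu_z} H^2$, where $H^0 = \C \cdot 1$, so that $\mu_z\colon H^0 \to H^1$ sends $1 \mapsto z$, and $\mu_z\colon H^1 \to H^2$ sends $w \mapsto \mu(z \wedge w)$. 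Thus $H^1(H^{\bullet}(\mu), \mu_z) = \ker(\mu_z\colon H^1 \to H^2) / \C z$.

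The key step is to bound $\dim \ker(\mu_z\colon H^1 \to H^2)$ from below. Restricting attention to the subspace $V \subseteq H^1$: the map $w \mapsto \mu(z \wedge w)$ carries $V$ into $\mu(V \wedge V)$, which has dimension $i$ by hypothesis. Hence $\dim \ker(\mu_z|_V) \ge \dim V - i = k - i$. Since $z \in V$ and $\mu(z \wedge z) = 0$ (as $\mu$ is alternating), we have $z \in \ker(\mu_z|_V)$, and because $k - i \ge 2 > 1$, the quotient $\ker(\mu_z|_V)/\C z$ has dimension at least $k - i - 1 \ge 1$. This quotient injects into $H^1(H^{\bullet}(\mu), \mu_z)$, so $\dim_{\C} H^1(H^{\bullet}(\mu), \mu_z) \ge k - i - 1$, which by definition means $z \in \R_{k-i-1}(\mu)$.

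It remains to promote this to the statement $V \subseteq \R_{k-i-1}(\mu)$, not just that a generic point of $V$ lies there. Here I would observe that the argument above used nothing about $z$ beyond $z \in V$: for \emph{every} $z \in V$ we get $\dim \ker(\mu_z|_V) \ge k - i$ by the same rank estimate, and $z \in \ker(\mu_z|_V)$ always since $\mu$ is alternating, so $\dim(\ker(\mu_z|_V)/\C z) \ge k-i-1$ for all $z \in V$ (with the convention that when $z = 0$ the quotient is just $\ker(\mu_0|_V) = V$, of dimension $k \ge k-i-1$). Hence every $z \in V$ satisfies $z \in \R_{k-i-1}(\mu)$, giving $V \subseteq \R_{k-i-1}(\mu)$. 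Finally $\R_{k-i-1}(\mu) \subseteq \R_1(\mu)$ since $k - i - 1 \ge 1$ and the resonance varieties are nested decreasingly in the index. I expect the only genuinely subtle point to be making sure the $i \ge k$ nothing is being swept under the rug at $z=0$ and keeping track that $\mu$ being alternating (so $\mu(z\wedge z)=0$) is exactly what forces $z$ into the kernel and hence into the resonance variety even though $z$ is the coboundary one must quotient out by; no genericity of $z$ is actually needed, which is why the conclusion is about all of $V$.
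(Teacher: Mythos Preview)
Your proposal is correct and follows essentially the same argument as the paper: for each $z\in V$ the subspace $\ker(\mu_z|_V)$ (which the paper denotes $z^{\perp}_V$) has dimension at least $k-i$ by rank--nullity, and its quotient by $\C z$ embeds in $H^1(H^{\bullet}(\mu),\mu_z)$. Your initial detour through a ``generic $z$'' is unnecessary, as you yourself observe; the paper simply runs the argument for every $z\in V$ from the start.
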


\begin{proof}
Let $x \in V$, and set $x^{\perp}_V=\{y \in V \mid \mu (x \wedge y)=0\}$.  
Clearly, $\dim x^{\perp}_V \geq k - i$. On the other hand, 
$x^{\perp}_V \slash \C \cdot x \subset H^1(H^{\bullet},\mu _x)$, 
and so $x \in  \R_{k-i-1} (\mu)$. 
\end{proof}

Therefore, the subspaces $V \subset H^1$ for which 
$\dim \im (\mu\colon V \wedge V \to H^2)$ is small
are particularly interesting. This remark gives a 
preliminary motivation for the following key definition.

\begin{definition} 
\label{def=position} 
Let $\mu \colon \bigwedge ^2H^1 \to  H^2$ be a 
$\C$-linear mapping, where $\dim H^1<\infty$, and let 
$V \subset H^1$ be a $\C$-linear subspace. 

\begin{romenum}
\item \label{it1}
$V$ is {\em  $0$-isotropic} (or, {\em isotropic}) 
with respect to $\mu$ if the restriction 
$\mu ^V\colon \bigwedge ^2V \to H^2$ is trivial. 

\item \label{it2}
$V$ is  {\em $1$-isotropic}\/ with respect to $\mu$ if the restriction 
$\mu ^V\colon\bigwedge ^2V \to  H^2$ has $1$-dimensional 
image and is a non-degenerate skew-symmetric bilinear form.
\end{romenum}
\end{definition}

\begin{example} 
\label{ex=position} 
Let $C$ be a smooth curve, 
and let $\mu =\cup_C \colon \bigwedge ^2H^1(C,\C) \to H^2(C,\C)$ be 
the usual cup-product map.  There are two cases of interest to us.
\begin{romenum}
\item \label{curv1}
 If  $C$ is not compact, then $ H^2(C,\C)=0$ 
and so any subspace $V \subset H^1(C,\C)$ is isotropic.
\item \label{curv2} 
If $C$ is  compact, of genus $g\ge 1$, then 
$ H^2(C,\C)=\C$ and $ H^1(C,\C)$ is  $1$-isotropic.
\end{romenum}
\end{example}

Now let $\mu _1\colon  \bigwedge ^2H^1_1 \to  H^2_1$ and 
$\mu _2\colon  \bigwedge ^2H^1_2 \to  H^2_2$ be two $\C$-linear maps. 
\begin{definition} 
\label{def=similar} 
The maps $\mu _1$ and  $\mu _2$ are {\em equivalent} 
(notation $\mu _1 \simeq   \mu _2$) if there exist linear 
isomorphisms $\phi ^1\colon H_1^1 \to H_2^1$ and 
$\phi ^2\colon \im (\mu _1) \to \im (\mu _2)$ 
such that $\phi^2 \circ \mu _1= \mu_2 \circ \wedge ^2 \phi^1$.
\end{definition} 

The key point of this definition is that the $k$-resonant varieties 
$\R_k(\mu _1)$ and $\R_k(\mu _2)$ can be identified under $\phi ^1$ 
when $\mu _1 \simeq \mu _2$.  Moreover, subspaces that are either 
$0$-isotropic or $1$-isotropic with respect to $\mu_1$ and $\mu_2$ 
are matched under $\phi^1$.

\subsection{Proof of Theorem \ref{thm=posobstr}\eqref{a1}}
\label{ss63}

We have 
$\V^{\alpha}= f_{\alpha}^*\T_{\pi_1(C_{\alpha})}$, where $f_{\alpha}$ is
admissible and $\chi(C_{\alpha})<0$; see Theorem \ref{thm:vadm}\eqref{va1}.
Therefore, $\TT^{\alpha}= f_{\alpha}^* H^1(C_{\alpha}, \C)$. If the curve
$C_{\alpha}$ is non-compact, the subspace $\TT^{\alpha}$ is clearly isotropic,
and $\dim \TT^{\alpha}=b_1(C_\alpha) \ge 2$. If $C_{\alpha}$ is compact and
$f_{\alpha}^*$ is zero on $H^2(C_{\alpha}, \C)$, we obtain 
the same conclusion as before. Finally, if $C_{\alpha}$ is compact and
$f_{\alpha}^*$ is non-zero on $H^2(C_{\alpha}, \C)$, then plainly
$\TT^{\alpha}$ is $1$-isotropic and $\dim \TT^{\alpha}=b_1(C_\alpha) \ge 4$.
The isotropicity property is thus established.

\subsection{Genericity obstruction}
\label{ss64}

The next three lemmas will be used in establishing the position 
obstruction from Theorem \ref{thm=posobstr}\eqref{a2}.

\begin{lemma}
\label{lem=fcok} 
Let $X$ be a connected quasi-compact K\"{a}hler manifold, $C$ a 
smooth curve and $f \colon X\to C$ a non-constant holomorphic 
mapping.  Assume that $f$ admits a holomorphic extension 
$\hat f\colon \hat X \to \hat C$, where $\hat X$ (respectively, $\hat C$) 
is a smooth compactification of $X$ (respectively, $C$). Then the induced 
homomorphism in homology, $f_*\colon H_1(X, \Z)\to H_1(C, \Z)$, 
has finite cokernel.
\end{lemma}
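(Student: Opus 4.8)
The plan is to reduce the statement to a question about the compactified map $\hat f\colon \hat X\to \hat C$, where the analogous surjectivity statement is classical. First I would observe that, since $\hat f$ is a holomorphic surjection between compact K\"ahler manifolds (in fact we may arrange, after blowing up, that $\hat f$ has connected fibers, or simply pass to the Stein factorization), the induced map $\hat f_*\colon H_1(\hat X,\Q)\to H_1(\hat C,\Q)$ is surjective. One standard way to see this: the transpose is the pullback $\hat f^*\colon H^1(\hat C,\Q)\to H^1(\hat X,\Q)$, and $\hat f^*$ is injective because $\hat f$ is surjective (a non-constant holomorphic map between connected compact complex manifolds is surjective, so $\hat f^*$ of a nonzero class is detected by integrating its top power, or more simply: $\hat f_*\hat f^*$ is multiplication by the generic degree on $H^0$, hence $\hat f^*$ is split-injective rationally once one knows $\hat f$ has connected generic fiber). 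Dualizing, $\hat f_*$ is surjective on $H_1(\cdot,\Q)$, hence has finite cokernel on $H_1(\cdot,\Z)$.

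Next I would compare $H_1(X,\Z)$ with $H_1(\hat X,\Z)$ and likewise $H_1(C,\Z)$ with $H_1(\hat C,\Z)$. Here $\hat X$ is obtained from $X$ by adding a divisor (with normal crossings, by admissibility, but we only need that it is a proper analytic subset), and $\hat C$ is obtained from $C$ by adding finitely many points. In both cases the restriction map $H_1(X,\Z)\to H_1(\hat X,\Z)$ is \emph{surjective}: a compact oriented manifold minus a closed submanifold still surjects onto the $H_1$ of the ambient manifold (every loop in $\hat X$ can be pushed off the divisor, since the divisor has real codimension $\ge 2$). The same holds for $C\hookrightarrow \hat C$. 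Now I have a commutative square
\[
\xymatrix{
H_1(X,\Z) \ar[r]^{f_*} \ar@{->>}[d] & H_1(C,\Z) \ar@{->>}[d] \\
H_1(\hat X,\Z) \ar[r]^{\hat f_*} & H_1(\hat C,\Z)
}
\]
in which the two vertical maps are surjective and the bottom map has finite cokernel. A trivial diagram chase then shows the composite $H_1(X,\Z)\to H_1(\hat X,\Z)\to H_1(\hat C,\Z)$ has finite cokernel, and since that composite factors through $H_1(C,\Z)\to H_1(\hat C,\Z)$ via $f_*$, the map $f_*$ itself must have cokernel mapping \emph{onto} a finite-cokernel subgroup of $H_1(\hat C,\Z)$; combined with surjectivity of $H_1(C,\Z)\to H_1(\hat C,\Z)$, one concludes $\coker(f_*)$ is finite. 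To make the last step clean, I would instead work rationally throughout: tensor the whole square with $\Q$, note the vertical maps stay surjective and the bottom map becomes surjective, conclude $\hat f_*\circ(\text{left vertical})$ is surjective, hence $f_*\otimes\Q$ is surjective, i.e. $\coker(f_*)$ is finite.

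The step I expect to require the most care is the surjectivity of $\hat f_*$ on rational $H_1$ for the compactified map, specifically handling the connectedness-of-fibers hypothesis. The admissibility definition in the paper only gives a holomorphic surjective \emph{extension} $\hat f$; if its generic fiber is disconnected, $\hat f^*$ on $H^1$ need not be split, though it is still injective, and injectivity of $\hat f^*$ on $H^1$ is all one needs to get surjectivity of $\hat f_*$ on $H_1$ after dualizing over $\Q$ (using that $H^1(\hat C,\Q)$ is dual to $H_1(\hat C,\Q)$ and that these are finite-dimensional). So I would lean on: (i) a non-constant holomorphic map between connected compact complex manifolds is surjective; (ii) for such a surjection $\hat f^*\colon H^1(\hat C,\Q)\to H^1(\hat X,\Q)$ is injective (pull back a closed $1$-form representing a nonzero class and integrate along a curve mapping finitely onto a loop in $\hat C$ realizing the dual homology class — the degree is nonzero, so the period is nonzero); (iii) dualize. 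The rest is the elementary diagram chase with the three restriction-surjectivity facts for removing a codimension-$\ge 2$ (real codimension $\ge 2$) subset, and these are standard transversality/general-position arguments that I would only cite.
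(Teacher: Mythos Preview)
Your diagram chase at the end is flawed, and the gap is not cosmetic. From the commutative square
\[
\xymatrix{
H_1(X,\Q) \ar[r]^{f_*} \ar@{->>}[d] & H_1(C,\Q) \ar@{->>}[d]^{r} \\
H_1(\hat X,\Q) \ar@{->>}[r]^{\hat f_*} & H_1(\hat C,\Q)
}
\]
you correctly deduce that $r\circ f_*$ is surjective. But this does \emph{not} force $f_*$ to be surjective, because $r$ has a nontrivial kernel whenever $C\ne\hat C$: if $\hat C\setminus C=\{p_1,\dots,p_m\}$ with $m\ge 1$, then $\ker r$ is an $(m-1)$-dimensional space spanned by loops around the punctures, and nothing in your argument shows these loops lie in $\im(f_*)$. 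The failure is most transparent when $\hat C=\PP^1$: then $H_1(\hat C,\Q)=0$, the bottom row is $0\to 0$, and your square gives no information whatsoever about $f_*\colon H_1(X,\Q)\to H_1(C,\Q)\cong\Q^{m-1}$.

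The paper avoids this by going in the opposite direction: instead of compactifying, it removes \emph{more} points. One deletes from $\hat C$ both $\hat C\setminus C$ and the finitely many critical values of $\hat f$, obtaining $C'\subset C$, and sets $\hat X'=\hat f^{-1}(C')$. Over $C'$ the map is a locally trivial fibration with compact (hence finitely-connected) fiber, so the homotopy exact sequence gives $\pi_1(\hat X')\to\pi_1(C')$ with finite-index image. Since $H_1(C',\Z)\to H_1(C,\Z)$ is onto and $X\setminus \hat f^{-1}(B)$ sits inside both $X$ and $\hat X'$, one pushes the finite-index statement forward to $f_*$. The key point your approach misses is precisely that the loops around punctures of $C$ are hit: in the fibration picture they already live in $\pi_1(C')$, which surjects onto $\pi_1(C)$.
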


\begin{proof}
Let $Y =\Sing(\hat f)$ be the set of singular points of $\hat f$, i.e., 
the set of all points $x \in \hat X$ such that $d_x\hat f=0$. Then 
$Y$ is a closed analytic subset of $\hat X$. Using Remmert's Theorem, 
we find that $Z=\hat f (Y)$ is a closed analytic subset of $\hat C$,  
and $\hat f(\hat X)=\hat C$.  By Sard's Theorem, $Z \ne \hat C$, 
hence $Z$ is a finite set. 

Let $B=(\hat C \setminus C) \cup Z$; set $C'=\hat C \setminus B$, 
and $\hat X '=\hat X \setminus \hat f^{-1}(B)   =\hat f^{-1}(C')$. Then 
the restriction $\hat f '\colon \hat X '\to C'$ is a locally trivial fibration;  
its fiber is a compact manifold, and thus has only finitely many 
connected components. Using the tail end of the homotopy 
exact sequence of this fibration, we deduce 
that the induced homomorphism, 
$\hat f'_{\sharp} \colon \pi_1(\hat X ')\to \pi_1(C')$, 
has image of finite index. 

Now note that $i \circ \hat f ' \circ k=f\circ j$, where 
$i\colon C' \to C$, $j\colon X \setminus \hat f^{-1}(B) \to X$, 
and $k\colon X \setminus \hat f^{-1}(B) \to \hat X '$ are the 
inclusion maps. From the above, it follows that
$f_{*}\colon H_1(X,\Z)\to H_1(C,\Z)$ has image 
of finite index. 
\end{proof}

Let $\V^{\alpha} \ne \V^{\beta}$ be two  positive-dimensional, 
irreducible components of $\V_1(\pi_1(M))$ containing $1$. Realize 
them by pull-back, via admissible maps, $f_{\alpha}\colon M\to C_{\alpha}$ and 
$f_{\beta}\colon M\to C_{\beta}$, as in Theorem \ref{thm:vadm}\eqref{va1}.
We know that generically (that is, for $t\in C_{\alpha}\setminus B_{\alpha}$, 
where $B_{\alpha}$ is finite) the fiber $f_{\alpha}^{-1}(t)$  is smooth and 
irreducible.

\begin{lemma}
\label{lem=noncst}
In the above setting, there exists $t\in C_{\alpha}\setminus B_{\alpha}$
such that the restriction of $f_{\beta}$ to $f_{\alpha}^{-1}(t)$ 
is non-constant.
\end{lemma}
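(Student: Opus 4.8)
\textbf{Proof plan for Lemma \ref{lem=noncst}.}

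The plan is to argue by contradiction: suppose that for \emph{every} $t\in C_{\alpha}\setminus B_{\alpha}$ the restriction $f_{\beta}|_{f_{\alpha}^{-1}(t)}$ is constant. First I would observe that this forces $f_{\beta}$ to factor (at least over the dense Zariski-open set $C_{\alpha}\setminus B_{\alpha}$) through $f_{\alpha}$: the fibers of $f_{\alpha}$ are connected and each is sent by $f_{\beta}$ to a single point, so there is a well-defined map of sets $g\colon C_{\alpha}\setminus B_{\alpha}\to C_{\beta}$ with $f_{\beta}=g\circ f_{\alpha}$ on $f_{\alpha}^{-1}(C_{\alpha}\setminus B_{\alpha})$. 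Since $f_{\alpha}$ is a surjective holomorphic submersion away from its critical locus and $f_{\beta}$ is holomorphic, $g$ is holomorphic on the complement of a finite set; by Riemann extension (the target curve being compactified to $\oC_{\beta}$, with $g$ bounded near the punctures) $g$ extends to a holomorphic map $\oC_{\alpha}\to \oC_{\beta}$, which I will still call $g$.

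Next I would derive the contradiction from the \emph{structure} of $\V^{\alpha}$ and $\V^{\beta}$. By Theorem \ref{thm:vadm}\eqref{va1}, $\V^{\alpha}=f_{\alpha}^{*}\T_{\pi_1(C_{\alpha})}$ and $\V^{\beta}=f_{\beta}^{*}\T_{\pi_1(C_{\beta})}$, and both $(f_{\alpha})_{\sharp}$ and $(f_{\beta})_{\sharp}$ are surjective on fundamental groups. The factorization $f_{\beta}=g\circ f_{\alpha}$ (now valid up to homotopy on all of $M$, since it holds on the complement of a codimension-one subset and $\pi_1$ is insensitive to removing such a set after noting $f_{\alpha}$ has connected generic fiber) gives $f_{\beta}^{*}=f_{\alpha}^{*}\circ g^{*}$ on characters. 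Hence $\V^{\beta}=f_{\beta}^{*}\T_{\pi_1(C_{\beta})}=f_{\alpha}^{*}\bigl(g^{*}\T_{\pi_1(C_{\beta})}\bigr)\subseteq f_{\alpha}^{*}\T_{\pi_1(C_{\alpha})}=\V^{\alpha}$. Since both are irreducible components of $\V_1(\pi_1(M))$, the inclusion $\V^{\beta}\subseteq\V^{\alpha}$ forces $\V^{\alpha}=\V^{\beta}$, contradicting the hypothesis $\V^{\alpha}\ne\V^{\beta}$.

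The step I expect to be the main obstacle is making the factorization $f_{\beta}=g\circ f_{\alpha}$ rigorous at the level of holomorphic maps of the compactifications, rather than just as a set map on a Zariski-open subset. The delicate points are: (i) that the assignment $t\mapsto f_{\beta}(f_{\alpha}^{-1}(t))$ is genuinely single-valued uses connectedness of the generic fiber $f_{\alpha}^{-1}(t)$, which is guaranteed since $\overline{f}_{\alpha}$ has connected fibers; (ii) holomorphy of $g$ off the finite bad set follows by working locally over a point where $f_{\alpha}$ is a submersion and choosing a local holomorphic section of $f_{\alpha}$, so that $g=f_{\beta}\circ(\text{section})$; and (iii) the extension across $B_{\alpha}$ and across $\oC_{\alpha}\setminus C_{\alpha}$ uses that $g$ maps into the compact curve $\oC_{\beta}$, so it is automatically bounded, and Riemann's removable singularity theorem applies. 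Once holomorphy of the extended $g$ is secured, passing to fundamental groups and characters is formal, using only Theorem \ref{thm:vadm}\eqref{va1} and the fact that distinct irreducible components cannot be nested.
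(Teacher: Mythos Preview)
Your proposal is correct and follows the same overall route as the paper: argue by contradiction, obtain a factorization $f_\beta = g\circ f_\alpha$, and deduce $\V^\beta = f_\beta^*\T_{\pi_1(C_\beta)} \subseteq f_\alpha^*\T_{\pi_1(C_\alpha)} = \V^\alpha$, contradicting the assumption that $\V^\alpha$ and $\V^\beta$ are distinct irreducible components.

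The only difference is in how the extension of $g$ is obtained, and here the paper is more elementary than your plan. The paper makes no holomorphy claim and does not pass to compactifications; it simply constructs a \emph{continuous} map $h\colon C_\alpha\to C_\beta$ with $h\circ f_\alpha=f_\beta$ on all of $M$. For $t_0\in B_\alpha$, pick any $x\in f_\alpha^{-1}(t_0)$; since $f_\alpha$ has finite order at $x$, one can find $x_n\to x$ with $f_\alpha(x_n)=t_n$ generic and $t_n\to t_0$, and then $f_\beta(x)=\lim h(t_n)$ is independent of $x$. This directly defines $h(t_0)\in C_\beta$. Continuity is all that is needed to pass to $\pi_1$ and character tori. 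Your local-section-plus-Riemann-extension argument works too, but is more than required; and your concern about the factorization holding only ``up to homotopy'' on $M$ is unnecessary once you note that $g\circ f_\alpha=f_\beta$ on a dense open set plus continuity forces equality everywhere, whence also $g(C_\alpha)=f_\beta(M)\subseteq C_\beta$.
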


\begin{proof}
Assume $f_{\beta}$ has constant value, $h(t)$, on the fiber 
$f_{\alpha}^{-1}(t)$, for $t\in C_{\alpha}\setminus B_{\alpha}$. 
We first claim that this implies the existence of a continuous 
extension, $h\colon C_{\alpha}\to C_{\beta}$, with the property 
that $h\circ f_{\alpha}= f_{\beta}$. 

Indeed, let us pick an arbitrary special value, $t_0\in B_{\alpha}$, 
together with a sequence of generic values, 
$t_n\in C_{\alpha}\setminus B_{\alpha}$, 
converging to $t_0$. For any $x\in f_{\alpha}^{-1}(t_0)$, 
note that the order at $x$ of the holomorphic function $f_{\alpha}$ 
is finite. Hence, we may find a sequence, $x_n \to x$, such that 
$f_{\alpha}(x_n)=t_n$. By our assumption, $f_{\beta}(x)= \lim h(t_n)$, 
independently of $x$, which proves the claim.

At the level of character tori, the fact that 
$h\circ f_{\alpha}= f_{\beta}$ implies  
$\V^{\beta}= f_{\beta}^* \T_{\pi_1(C_{\beta})}\subset 
f_{\alpha}^*\T_{\pi_1(C_{\alpha})}= \V^{\alpha}$, a contradiction.
\end{proof}

\begin{lemma}
\label{lem=finite}
Let $\V^{\alpha}$ and $\V^{\beta}$ be two distinct 
irreducible components of $\V_1(\pi_1(M))$ containing $1$.  
Then $\V^{\alpha}\cap \V^{\beta}$ is finite.
\end{lemma}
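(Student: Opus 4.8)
The statement to prove is Lemma~\ref{lem=finite}: two distinct irreducible components $\V^{\alpha}$ and $\V^{\beta}$ of $\V_1(\pi_1(M))$ through $1$ intersect in a finite set. The plan is to combine the structural description of $\V^{\alpha}$ and $\V^{\beta}$ from Arapura's Theorem~\ref{thm:vadm}\eqref{va1} with the non-constancy result just established in Lemma~\ref{lem=noncst} and the finite-cokernel statement of Lemma~\ref{lem=fcok}. By Theorem~\ref{thm:vadm}\eqref{va1}, we have admissible maps $f_{\alpha}\colon M\to C_{\alpha}$ and $f_{\beta}\colon M\to C_{\beta}$ with $\V^{\alpha}=f_{\alpha}^{*}\T_{\pi_1(C_{\alpha})}$ and $\V^{\beta}=f_{\beta}^{*}\T_{\pi_1(C_{\beta})}$, and with $(f_{\alpha})_{\sharp}$, $(f_{\beta})_{\sharp}$ surjective on $\pi_1$. (If either component is a point there is nothing to prove, so assume both are positive-dimensional.)

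First I would reduce the claim to a statement about the images of $f_{\alpha}^{*}$ and $f_{\beta}^{*}$ inside $\T_G$: since each $f_{\gamma}^{*}$ is a closed embedding of the subtorus $\T_{\pi_1(C_{\gamma})}$ into $\T_G$ (injectivity follows from surjectivity of $(f_{\gamma})_{\sharp}$ on $\pi_1$, hence on $H_1$ with finite cokernel), the intersection $\V^{\alpha}\cap\V^{\beta}$ is finite if and only if the subtori $f_{\alpha}^{*}\T_{\pi_1(C_{\alpha})}$ and $f_{\beta}^{*}\T_{\pi_1(C_{\beta})}$ meet in a finite set, i.e. their tangent spaces $\TT^{\alpha}=f_{\alpha}^{*}H^1(C_{\alpha},\C)$ and $\TT^{\beta}=f_{\beta}^{*}H^1(C_{\beta},\C)$ have zero intersection in $H^1(G,\C)$. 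So it suffices to prove $\TT^{\alpha}\cap\TT^{\beta}=\{0\}$.

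Suppose, for contradiction, that there is a nonzero class $\xi\in\TT^{\alpha}\cap\TT^{\beta}$, say $\xi=f_{\alpha}^{*}\eta_{\alpha}=f_{\beta}^{*}\eta_{\beta}$ with $\eta_{\gamma}\in H^1(C_{\gamma},\C)$ nonzero. By Lemma~\ref{lem=noncst}, there is a generic value $t\in C_{\alpha}\setminus B_{\alpha}$ such that the restriction $g:=f_{\beta}|_{f_{\alpha}^{-1}(t)}$ is non-constant; let $F=f_{\alpha}^{-1}(t)$, a smooth irreducible (quasi-)projective variety, and let $\iota\colon F\hookrightarrow M$ be the inclusion. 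Then $g=f_{\beta}\circ\iota\colon F\to C_{\beta}$ is a non-constant holomorphic map admitting a holomorphic extension between smooth compactifications (obtained by restricting the normal-crossings compactification of $f_{\beta}$), so Lemma~\ref{lem=fcok} applies: $g_{*}\colon H_1(F,\Z)\to H_1(C_{\beta},\Z)$ has finite cokernel, hence $g^{*}\colon H^1(C_{\beta},\C)\to H^1(F,\C)$ is injective. In particular $\iota^{*}\xi=\iota^{*}f_{\beta}^{*}\eta_{\beta}=g^{*}\eta_{\beta}\neq 0$. On the other hand, $\iota^{*}\xi=\iota^{*}f_{\alpha}^{*}\eta_{\alpha}=(f_{\alpha}\circ\iota)^{*}\eta_{\alpha}$, and $f_{\alpha}\circ\iota$ is the constant map $F\to\{t\}\subset C_{\alpha}$, so $(f_{\alpha}\circ\iota)^{*}=0$ on $H^1$ and $\iota^{*}\xi=0$. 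This contradiction shows $\TT^{\alpha}\cap\TT^{\beta}=\{0\}$, and hence $\V^{\alpha}\cap\V^{\beta}$ is finite.

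The main obstacle I anticipate is the bookkeeping in the reduction of the first paragraph — verifying that each $f_{\gamma}^{*}$ really is a closed immersion onto a subtorus so that finiteness of the intersection is equivalent to transversality of tangent spaces at $1$, and checking that the compactification of $f_{\beta}$ does restrict to a holomorphic extension of $g=f_{\beta}|_{F}$ with the hypotheses of Lemma~\ref{lem=fcok} (one needs that $F$, being a generic fiber, is smooth and that its closure in the compactification of $M$ is a smooth compactification of $F$, which follows from genericity). The actual contradiction — pulling $\xi$ back along $\iota$ and getting $0$ from the $f_{\alpha}$-description and nonzero from the $f_{\beta}$-description — is then immediate.
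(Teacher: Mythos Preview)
Your proof is correct and follows essentially the same line as the paper's: both use Lemma~\ref{lem=noncst} to produce a generic fiber $F$ of $f_{\alpha}$ on which $g=f_{\beta}|_{F}$ is non-constant, then invoke Lemma~\ref{lem=fcok} to say $g_{*}$ has finite cokernel on $H_1$, and finally exploit the fact that anything pulled back via $f_{\alpha}$ dies on $F$ while anything pulled back via $f_{\beta}$ survives. The only difference is packaging: you first reduce (via the subtorus structure) to showing $\TT^{\alpha}\cap\TT^{\beta}=\{0\}$ and then run the argument with $\C$-cohomology classes, whereas the paper works directly with characters and shows that every $\rho\in\V^{\alpha}\cap\V^{\beta}$ satisfies $\rho^{m}=1$ for the integer $m$ coming from Lemma~\ref{lem=fcok}, hence the intersection sits inside the $m$-torsion of $\T_G$. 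Your version is perhaps a shade cleaner once the reduction is made; the paper's version avoids the preliminary reduction and gives the explicit torsion bound. The technical concern you flag about applying Lemma~\ref{lem=fcok} to $g$ is shared by the paper's proof and is handled the same way.
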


\begin{proof}
We may suppose that both components are positive-dimensional. 
Lemma \ref{lem=noncst} guarantees the existence of a 
generic fiber of $f_{\alpha}$, say $F_{\alpha}$, with the property 
that the restriction of $f_{\beta}$ to $F_{\alpha}$, call it 
$g\colon F_{\alpha} \to C_{\beta}$, is non-constant. By 
Lemma \ref{lem=fcok}, there exists a positive integer $m$
with the property that
\begin{equation}
\label {eq=cokf}
m\cdot H_1(C_{\beta}, \Z)\subset \im (g_*)\, .
\end{equation}
We will finish the proof by showing that
$\rho^m =1$, for any $\rho\in \V^{\alpha}\cap \V^{\beta}$.

To this end, write $\rho= \rho_{\beta}\circ (f_{\beta})_*$, 
with $\rho_{\beta}\in \T_{\pi_1(C_{\beta})}$. For an 
arbitrary element $a\in H_1(M, \Z)$, we have 
$\rho^m (a)= \rho_{\beta}(m\cdot (f_{\beta})_*a)$. 
From \eqref{eq=cokf}, it follows that 
$m\cdot (f_{\beta})_*a=(f_{\beta})_*( j_\alpha)_*b$, 
for some $b\in H_1(F_{\alpha}, \Z)$, where 
$j_\alpha\colon F_{\alpha}\hookrightarrow M$ is 
the inclusion. 
On the other hand, we may also write 
$\rho= \rho_{\alpha}\circ (f_{\alpha})_*$, with 
$\rho_{\alpha}\in \T_{\pi_1(C_{\alpha})}$. Hence,  
$\rho^m (a)= \rho_{\alpha}((f_{\alpha})_* (j_{\alpha})_*b)= 
\rho_{\alpha}(0)=1$, as claimed.
\end{proof}

By passing to tangent spaces, Lemma \ref{lem=finite} implies 
Theorem \ref{thm=posobstr}\eqref{a2}.

\subsection{Proof of Theorem \ref{thm=posobstr}\eqref{a0}}
\label{ss65}

For this property, only $1$-formality is needed. See 
Theorem \ref{thm=bnew}\eqref{bn3}.

\subsection{Proof of Theorem \ref{thm=posobstr}\eqref{a3}}
\label{ss66}

This filtration-by-dimension resonance obstruction is a 
consequence of position obstructions from Parts \eqref{a1} 
and \eqref{a2}, in the presence of $1$-formality. We may assume 
$k<b_1(G)$, since otherwise $\R_k(G)= \{0\} $, and there is nothing 
to prove.

To prove the desired equality, we have to check first that any 
non-zero element $u \in \R_k(G)$ belongs to some 
$ \R^{\alpha}$ with $\dim \R^{\alpha} >k$. 
The definition of $\R_k$ guarantees the existence of 
elements $v_1,\dots ,v_k \in H^1(G,\C)$ with $v_i \cup  u=0$, 
and such that $u, v_1,\dots ,v_k$ are linearly independent. 
Since the subspaces  $\langle u, v_i \rangle $ spanned by the 
pairs $\{u,v_i\}$ are clearly contained in $\R_1(G)$, it follows that 
$\langle u,v_i \rangle\subset  \R^{\alpha _i}$.  
Necessarily $ \alpha _1=\cdots =\alpha _k :=\alpha$, since 
otherwise property \eqref{a2} would be violated. This proves 
$u\in \R^{\alpha}$, with $\dim  \R^{\alpha} >k$. 
Now, if $p({\alpha})=1$, then $\dim \R^{\alpha}>k+1$.  
For otherwise, we would have $\R^{\alpha}= u^{\perp}_{ \R^{\alpha}}$, 
which would violate the non-degeneracy property from 
Definition \ref{def=position}\eqref{it2}.
Finally, that $\dim  \R^{\alpha} >k+ p({\alpha})$ implies 
$\R^{\alpha} \subset  \R_k(G)$ follows at once  
from Lemma \ref{lem=linalg1}.

\subsection{Fibered quasi-K\"{a}hler groups}
\label{ss67}

The next Lemma proves Theorem \ref{thm=posobstr}\eqref{a4}.

\begin{lemma}
\label{cor:fibered}
Let $M$ be a connected  quasi-compact K\"{a}hler manifold. 
Suppose the group $G=\pi_1(M)$ is $1$-formal. 
The following are then equivalent.
\begin{romenum}
\item \label{gf3}
There is an epimorphism, $\varphi\colon G\surj \bF_r$, onto
a free group of rank $r\ge 2$.
\item \label{gf4}
The resonance variety $\R_1(G)$ strictly contains $\{ 0\}$.
\end{romenum}
\end{lemma}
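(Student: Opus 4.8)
The plan is to prove the equivalence \eqref{gf3} $\Leftrightarrow$ \eqref{gf4} by a two-sided argument, using the structural results established earlier in the paper together with Arapura's theorem.

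\medskip

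For the implication \eqref{gf3} $\Rightarrow$ \eqref{gf4}, I would argue as follows. Given an epimorphism $\varphi\colon G \surj \bF_r$ with $r\ge 2$, the induced map $\varphi^*\colon H^1(\bF_r,\C)\to H^1(G,\C)$ is injective, with image a subspace $V$ of dimension $r\ge 2$. Since $H^2(\bF_r,\C)=0$, the cup product vanishes on $\wedge^2 V$, so $V$ is isotropic with respect to $\cup_G$; in particular $\dim\im(\cup_G\colon V\wedge V\to H^2(G,\C))=0< r-1$. By Lemma \ref{lem=linalg1}, $V\subseteq \R_{r-1}(G)\subseteq \R_1(G)$, so $\R_1(G)$ strictly contains $\{0\}$. (Note this direction does not even use $1$-formality or quasi-K\"ahlerianity.)

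\medskip

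For the converse \eqref{gf4} $\Rightarrow$ \eqref{gf3}, I would combine $1$-formality with Arapura's theorem. Suppose $\R_1(G)\supsetneq\{0\}$. By Theorem \ref{thm=bnew}\eqref{bn3} (which uses $1$-formality), the irreducible components of $\R_1(G)$ are exactly the tangent spaces $\TT^\alpha$ at $1$ of the positive-dimensional components $\V^\alpha$ of $\V_1(G)$ through $1$; since $\R_1(G)\ne\{0\}$, at least one such $\V^\alpha$ exists. By Theorem \ref{thm:vadm}\eqref{va1}, there is an admissible map $f_\alpha\colon M\to C_\alpha$ onto a smooth curve with $\chi(C_\alpha)<0$, inducing a surjection $(f_\alpha)_\sharp\colon G\surj \pi_1(C_\alpha)$. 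Now I would pass to a suitable quotient of $\pi_1(C_\alpha)$ that is free of rank $\ge 2$: if $C_\alpha$ is non-compact, then $\pi_1(C_\alpha)$ is already free of rank $b_1(C_\alpha)\ge 2$ (the condition $\chi(C_\alpha)<0$ forces $b_1\ge 2$ in the non-compact case), and we are done; if $C_\alpha$ is compact of genus $g\ge 2$ (again forced by $\chi<0$), then the surface group $\pi_1(C_\alpha)$ surjects onto $\bF_2$ (e.g.\ by killing half the standard generators, $a_1,\dots,a_g,b_1,\dots,b_g$ with $\prod[a_i,b_i]=1$, setting $b_i=1$ for all $i$ yields the free group on $a_1,\dots,a_g$). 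Composing with $(f_\alpha)_\sharp$ gives the desired epimorphism $\varphi\colon G\surj \bF_r$ with $r\ge 2$.

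\medskip

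The main obstacle is the converse direction, and within it the delicate point is making sure the curve $C_\alpha$ produced by Arapura's theorem genuinely admits a free quotient of rank at least $2$ on $\pi_1$ — this is where one must use $\chi(C_\alpha)<0$ carefully, since a priori a curve with $\chi<0$ could be compact of genus $\ge 2$ (surface group, not free) or non-compact (free, but one needs $b_1\ge 2$). Checking that $\chi<0$ rules out the remaining low-complexity cases (compact genus $0$ or $1$, the Riemann sphere, the affine line, and $\C^*$) and forces, in every surviving case, the existence of a rank-$\ge 2$ free quotient of $\pi_1(C_\alpha)$, is the heart of the argument; the reduction from $\R_1(G)\ne\{0\}$ to the existence of a positive-dimensional $\V^\alpha$ is immediate from Theorem \ref{thm=bnew}\eqref{bn3}.
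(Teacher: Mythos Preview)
Your proof is correct and follows essentially the same route as the paper. The only difference is in the final step of \eqref{gf4} $\Rightarrow$ \eqref{gf3}: where the paper invokes \cite[Corollary V.1.9]{A} to pass from the admissible map $f\colon M\to C$ with $\chi(C)<0$ to an epimorphism onto a free group of rank $\ge 2$, you instead write out the elementary case analysis (non-compact curve: $\pi_1$ already free of rank $\ge 2$; compact genus $g\ge 2$: kill the $b_i$'s to surject onto $\bF_g$). Your version is slightly more self-contained; the paper's is more concise.
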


\begin{proof}
The implication \eqref{gf3} $\Rightarrow$
\eqref{gf4}  holds in general; this may be seen by 
using the $\cup_G$-isotropic subspace 
$\varphi^* H^1(\bF_r, \C)\subset H^1(G, \C)$.
If $G$ is $1$-formal and $\R_1(G)$ strictly contains $\{ 0\}$, 
there is a positive-dimensional component $\V^{\alpha}$, by 
Theorem \ref{thm=posobstr}\eqref{a0}.  Applying 
Theorem \ref{thm:vadm}, we obtain an admissible map, 
$f\colon M\to C$, onto a smooth complex curve $C$ with 
$\chi(C)<0$. Property \eqref{gf3} follows then from 
\cite[Corollary V.1.9]{A}.
\end{proof}

This finishes the proof of Theorem \ref{thm=posobstr} 
from the Introduction. 

We close this section with a pair of examples showing that 
both the quasi-K\"{a}hler  and the $1$-formality 
assumptions are needed in order for Lemma \ref{cor:fibered}
to hold.  

\begin{example}
\label{ex:heisenberg}
Consider the smooth, quasi-projective variety $M_1$
examined by Morgan in \cite[p.203]{M} (the complex version of the 
Heisenberg manifold). It is well-known that $G_1=\pi_1(M_1)$ is a 
nilpotent group.   Therefore, property \eqref{gf3} fails for $G_1$. 
Nevertheless, $\R_1(G_1)=\C^2$, and so property \eqref{gf4} holds
for $G_1$. In particular, the group $G_1$ is not $1$-formal.
\end{example}

\begin{example}
\label{ex:nonorientable}
Let $N_h$ be the non-orientable  surface of genus 
$h\ge 1$, that is, the connected sum of $h$ real projective planes. 
It is readily seen that $N_h$ has the rational homotopy type of 
a wedge of $h-1$ circles.  Hence $N_h$ is a formal space, 
and so $\pi_1(N_h)$ is a $1$-formal group.   
Moreover, $\R_1(\pi_1(N_h))=\R_1(\bF_{h-1})$, and so 
$\R_1(\pi_1(N_h))=\C^{h-1}$, provided $h\ge 3$.  
Thus, property \eqref{gf4} holds for all groups 
$\pi_1(N_h)$ with $h\ge 3$. 

Now suppose there is an epimorphism 
$\varphi\colon\pi_1(N_h) \surj \bF_r$ with $r\ge 2$, as in \eqref{gf3}.  
Then the subspace 
$\varphi^* H^1(\bF_r, \Z_2)\subset H^1(\pi_1(N_h), \Z_2)$ 
has dimension at least $2$, and is isotropic with respect to 
$\cup_{\pi_1(N_h)}$.  Hence, $h\ge 4$, by Poincar\'e duality 
with $\Z_2$ coefficients.

Focussing on the case $h=3$, we see that the group $\pi_1(N_3)$ 
is $1$-formal, yet the implication \eqref{gf4} $\Rightarrow$ \eqref{gf3} 
from Lemma \ref{cor:fibered} fails for  this group.
It follows that $\pi_1(N_3)$ cannot be realized as the fundamental 
group of a  quasi-compact K\"{a}hler manifold. Note 
that this assertion is {\em not}\/ a consequence of Theorem
\ref{thm=posobstr}, Parts \eqref{a1}, \eqref{a2} and \eqref{a3}. 
Indeed, $\cup_{\pi_1(N_h)} \simeq \cup_{\bF_{h-1}}$ (over $\C$), 
while $\bF_{h-1}=\pi_1(\PP^1 \setminus \{\text{$h$ points}\})$, 
for all $h\ge 1$.
\end{example}

\section{Regular maps onto curves and isotropic subspaces} 
\label{sec=curves}

In this section, we present a couple of useful complements
to Theorem \ref{thm=posobstr}. 

\subsection{Admissible maps and isotropic subspaces}
\label{subsec=admiso}

We now consider in more detail which admissible maps 
$f_{\alpha}\colon M \to C_{\alpha}$ may occur in 
Theorem \ref{thm:vadm}.

\begin{prop} 
\label{prop=posobstr} 
Let $M$ be a connected quasi-compact K\"{a}hler manifold, 
and let $f\colon M \to C$ be an admissible map onto the 
smooth curve $C$.
\begin{enumerate}

\item \label{ra1} 
If $W_1(H^1(M,\C))=H^1(M,\C)$, then the curve $C$ 
is either compact, or it is obtained from a compact smooth 
curve $\oC$ by deleting a single point.

\item \label{ra2} 
If  $W_1(H^1(M,\C))=0$, then  the curve $C$ is rational.
If $\chi(C)<0$, then $C$ is obtained from $\C$ by deleting 
at least two points, and $f^* H^1( C,\C)$ is $0$-isotropic with 
respect to $\cup_M$.

\item \label{ra3} 
Assume in addition that $\pi_1(M)$ is $1$-formal.
If the curve $C$ is compact of genus at least $1$, then  
$f^*\colon H^2( C,\C) \to H^2( M ,\C)$ 
is injective, and so $f^* H^1( C,\C)$ is 
$1$-isotropic with respect to $\cup_M$.

\end{enumerate}
\end{prop}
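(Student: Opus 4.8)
The three parts are about controlling which curves $C$ can appear as targets of admissible maps, according to the weight filtration on $H^1(M,\C)$. The plan is to exploit the compatibility of $f^*$ with mixed Hodge structures, together with the basic weight-theoretic facts: $W_1 H^1(C,\C)=0$ exactly when $C$ is rational (genus zero), $W_1 H^1(C,\C)=H^1(C,\C)$ exactly when $C$ is compact, and the "correction" to these extremes is governed by the number of points deleted from the compactification $\oC$. Recall from Example~\ref{ex=position} that for a smooth curve $C$, the cup product $\cup_C$ is trivial when $C$ is non-compact and $1$-isotropic when $C$ is compact of positive genus; this is what links the weight statements to the isotropicity conclusions about $f^*H^1(C,\C)\subset H^1(M,\C)$.

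\textbf{Part~\eqref{ra1}.} Assuming $W_1(H^1(M,\C))=H^1(M,\C)$, I would observe that $f^*$ is a morphism of mixed Hodge structures, so $f^*H^1(C,\C)\subset W_1 H^1(M,\C)=H^1(M,\C)$ forces $H^1(C,\C)=W_1 H^1(C,\C)$ (using that $f^*$ is injective on $H^1$, which holds since $f_\sharp$ is surjective on $\pi_1$). But for a smooth curve $C$ with compactification $\oC$ of genus $g$, obtained by deleting $n$ points, one has $\dim W_1 H^1(C,\C)=2g$ and $\dim H^1(C,\C)=2g+\max(n-1,0)$. Hence $n\le 1$: either $C$ is compact ($n=0$) or $C=\oC\setminus\{\text{pt}\}$ ($n=1$).

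\textbf{Part~\eqref{ra2}.} Assuming $W_1(H^1(M,\C))=0$: since $f^*$ is a morphism of MHS and injective on $H^1$, we get $W_1 H^1(C,\C)=0$, i.e. $g=0$, so $C$ is rational. If moreover $\chi(C)<0$, then $\chi(C)=2-n<0$ gives $n\ge 3$, so $C=\PP^1\setminus\{n\text{ points}\}=\C\setminus\{n-1\text{ points}\}$ with $n-1\ge 2$; in particular $C$ is non-compact, so $H^2(C,\C)=0$ and $\cup_C$ is trivial, whence $f^*H^1(C,\C)$ is $0$-isotropic with respect to $\cup_M$ by naturality of cup products.

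\textbf{Part~\eqref{ra3}.} Here $C$ is compact of genus $g\ge 1$, and we add the hypothesis that $G=\pi_1(M)$ is $1$-formal. The point is to show $f^*\colon H^2(C,\C)\to H^2(M,\C)$ is injective; then since $H^1(C,\C)$ is $1$-isotropic (Example~\ref{ex=position}) and $f^*$ is an isomorphism onto its image in degree $1$ carrying $\cup_C$ to the restriction of $\cup_M$, the subspace $f^*H^1(C,\C)$ is $1$-isotropic. To get the injectivity I would argue by contradiction: if $f^*$ vanished on $H^2(C,\C)$, then $f^*H^1(C,\C)$ would be a $0$-isotropic subspace of $H^1(M,\C)$ of dimension $2g\ge 2$, which by Lemma~\ref{lem=linalg1} lies in $\R_1(G)$; but $f^*H^1(C,\C)$ is the tangent space $\TT^\alpha$ of a positive-dimensional component $\V^\alpha$ pulled back from $C$ (Theorem~\ref{thm:vadm}\eqref{va1}), and the proof of Theorem~\ref{thm=posobstr}\eqref{a1} in \S\ref{ss63} shows that for a compact curve $C$ with $\chi(C)<0$ the tangent space $\TT^\alpha$ is $1$-isotropic of dimension $\ge 4$ precisely when $f^*_\alpha$ is nonzero on $H^2$, whereas $f^*_\alpha$ zero on $H^2$ only happens when one can replace $C$ by a non-compact curve. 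Concretely, the $1$-formality is used to guarantee (via Theorem~\ref{thm=bnew} / Theorem~\ref{thm=posobstr}\eqref{a0}) that $\TT^\alpha$ is an irreducible component of $\R_1(G)$, so it cannot be a proper subspace of a larger isotropic component; combined with the isotropicity dichotomy of \S\ref{ss63}, this rules out the degenerate case and forces $f^*$ to be injective on $H^2$.

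\textbf{Main obstacle.} The routine weight-filtration bookkeeping in Parts~\eqref{ra1} and~\eqref{ra2} is standard (Deligne's theory, \cite{D}), so I expect the real work to be in Part~\eqref{ra3}: pinning down exactly how $1$-formality is invoked to exclude the case $f^*|_{H^2}=0$ when $C$ is compact of positive genus. The subtlety is that $f^*$ being zero on $H^2(C,\C)$ does not contradict anything at the level of $\V_1$ alone — one genuinely needs the resonance side, where $\TT^\alpha$ must be a \emph{maximal} isotropic subspace realized as a component of $\R_1(G)$, and this maximality is where $1$-formality enters. I would need to be careful that the component $\V^\alpha$ whose tangent space is $f^*H^1(C,\C)$ is not accidentally contained in a larger component coming from a different admissible map, which is precisely controlled by the position obstruction \eqref{a2} (Lemma~\ref{lem=finite}).
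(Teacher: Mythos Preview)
Your treatment of Parts~\eqref{ra1} and~\eqref{ra2} is essentially the paper's argument, phrased via strictness of morphisms of mixed Hodge structures rather than via the Gysin sequence; both routes yield the same purity conclusion on $H^1(C,\C)$.

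Part~\eqref{ra3}, however, has a genuine gap. Your plan is to identify $f^*H^1(C,\C)$ with the tangent space $\TT^{\alpha}$ of some component $\V^{\alpha}$ of $\V_1(G)$ and then invoke the isotropicity dichotomy of \S\ref{ss63}. Two problems arise. First, Part~\eqref{ra3} allows $g=1$, where $\chi(C)=0$; Arapura's Theorem~\ref{thm:vadm}\eqref{va1} gives components only for $\chi(C_\alpha)<0$, so for $g=1$ there is no component $\V^{\alpha}$ to attach to $f$, and your argument does not start. Second, even for $g\ge 2$, the passage in \S\ref{ss63} does \emph{not} furnish the dichotomy you need: it explicitly records that if $C_\alpha$ is compact and $f_\alpha^*$ vanishes on $H^2$, then $\TT^{\alpha}$ is $0$-isotropic---this case is permitted by Theorem~\ref{thm=posobstr}\eqref{a1}, not excluded by it. So assuming $f^*|_{H^2}=0$ and landing in the $0$-isotropic case is perfectly consistent with everything you cite; no contradiction is produced, and the appeal to maximality via \eqref{a0} or \eqref{a2} does not close the loop.

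The paper's argument is different and avoids the component structure entirely. It uses Theorem~\ref{thm:vadm}\eqref{va2} (which, as noted there, does not require $\chi(C)<0$) to get $\dim H^1(M,{}_{f^*\rho}\C)=\dim H^1(C,{}_{\rho}\C)$ for generic $\rho$ near $1$. Applying the tangent cone isomorphism (Theorem~\ref{thm=tcfintro}) to both $G$ and $\pi_1(C)$---the latter is $1$-formal since $C$ is compact K\"ahler---converts this to an equality of Aomoto Betti numbers: $\dim H^1(H^{\bullet}M,\mu_{f^*z})=\dim H^1(H^{\bullet}C,\mu_{z})=2g-2$ for small $z\ne 0$. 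If $f^*$ were zero on $H^2$, then $f^*H^1(C,\C)$ would be a $0$-isotropic subspace of dimension $2g$, and Lemma~\ref{lem=linalg1} would force $\dim H^1(H^{\bullet}M,\mu_{f^*z})\ge 2g-1$, a contradiction. This numerical comparison is where $1$-formality is actually used, and it works uniformly for all $g\ge 1$.
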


\begin{proof}
Recall that $f _{\sharp}\colon \pi_1(M) \to \pi_1(  C )$ 
is surjective; hence $f^*\colon H^1( C,\C) \to 
H^1( M,\C)$ is injective. 

Part~\eqref{ra1}. A quasi-compact K\"{a}hler manifold $M$ 
inherits a mixed Hodge structure from each good compactification 
$\oM$, by Deligne's construction in the smooth quasi-projective 
case \cite{D}. Furthermore, if $M$ is quasi-projective, this structure 
is unique, as shown in \cite[Theorem 3.2.5(ii)]{D}.

By the admissibility condition on $f\colon M\to C$, there is 
a good compactification ${\overline M}$ such that $f$ extends 
to a regular morphism ${\overline f}\colon \oM \to \oC$. 
Fixing such an extension, the condition 
$W_1(H^1(M,\C))=H^1(M,\C)$ just means that
$j^*(H^1(\oM,\C))=H^1(M,\C)$, where $j\colon 
M \to \oM$ is the inclusion. Since regular maps 
$f$ which extend to good compactifications of source 
and target obviously preserve weight filtrations, the 
mixed Hodge structure on $H^1(C,\C)$ must be pure 
of weight $1$, see \cite{D}. If we write 
$C= \oC \setminus A$, for some finite set $A$, 
then there is an exact Gysin sequence
\[
\xymatrixcolsep{12pt}
\xymatrix{0 \ar[r]& H^1(\oC, \C) \ar[r]& 
H^1({ C}, \C) \ar[r]& H^0(A,\C)(-1) 
\ar[r]&  H^2(\oC, \C) \ar[r]& 
H^2({ C}, \C) \ar[r]& 0 },
\]
see for instance \cite[p.~246]{D1}. But $H^0(A,\C)(-1)$ is pure 
of weight $2$, and so $ H^1({ C}, \C)$ is pure of weight $1$ 
if and only if $\abs{A} \leq 1$.

\smallskip
Part~\eqref{ra2}. 
By the same argument as before, we infer in this 
case that $H^1( C,\C)$ should be pure of weight $2$.
The above Gysin sequence shows that $ H^1({ C}, \C)$ 
is pure of weight $2$ if and only if $g(\oC)=0$, 
i.e., $\oC=\PP^1$. Finally,  $\chi (C)<0$ implies 
$\abs{A} \ge 3$. 

\smallskip
Part~\eqref{ra3}. Set $G:=\pi_1(M)$, $\T_M:=\T_G$ and 
$\T:=\T_{\pi_1(C)}$. Note  that $\dim \T>0$. 
Furthermore, the character torus $\T$ is embedded in 
$\T_M$, and its Lie algebra $T_1 (\T)$ is embedded in 
$T_1(\T_M)$, via the natural maps induced by $f$.  
By Theorem \ref{thm:vadm}\eqref{va2}, 
\[
\dim H^1(M,{}_{f^*{\rho}}\C)= \dim H^1( C, {}_{\rho}\C) ,
\]
for $\rho \in \T$ near $1$ and different from $1$, since
both the surjectivity of $f_{\sharp}$ in Part \eqref{va1}, 
and the property from Part \eqref{va2} do not require 
the assumption $\chi (C)<0$.

Applying Theorem \ref{thm=tcfintro} to both $G$ (using our 
$1$-formality hypothesis), and $\pi_1(C)$ (using 
Example \ref{ex:w2formal}), we obtain from the above equality that
\[
\dim H^1(H^{\bullet}(M,\C), \mu_{f^* z})= 
\dim H^1(H^{\bullet} (C,\C), \mu_z) ,
\]
for all $z\in H^1(C, \C)$ near $0$ and different from $0$.
Moreover, for any such $z$, a standard calculation shows   
$\dim H^1(H^{\bullet} (C,\C), \mu_z)=2g-2$, 
where $g=g(C)$. 

Now suppose $f^*\colon H^2( C,\C) \to H^2( M ,\C)$ were 
not injective. Then  $f^* H^1 ( C,\C)$ would be a $0$-isotropic 
subspace of $H^1(M, \C)$, containing $f^* (z)$.  In turn, this 
would imply  $\dim H^1(H^{\bullet}(M,\C), \mu_{f^* z})\ge 2g-1$, 
a contradiction.
\end{proof}

Using Theorem \ref{thm=posobstr}, we obtain the 
following. 

\begin{corollary} 
\label{cor:firstex}
Let $M$ be a connected  quasi-compact K\"{a}hler manifold, 
with fundamental group $G$, and first resonance variety 
$\R_1(G)=\bigcup_{\alpha}  \R^{\alpha}$. Assume $b_1(G)>0$ and
$\R_1(G)\ne \{0\}$.
\begin{enumerate}
\item \label{appl1} 
If $M$ is compact then $G$ is $1$-formal, and each component 
$\R^{\alpha}$ is $1$-isotropic, with $\dim \R^{\alpha}= 2g_{\alpha}\ge 4$.

\item \label{appl2} 
If $W_1(H^1 (M,\C))=0$ then $G$ is $1$-formal, and  each component 
$\R^{\alpha}$  is  $0$-isotropic, with $\dim \R^{\alpha}\ge 2$.

\item \label{appl3} 
If $W_1(H^1 (M,\C))=H^1 (M,\C)$ and $G$ is $1$-formal, then
$\dim \R^{\alpha}= 2g_{\alpha}\ge 2$, for all $\alpha$.
\end{enumerate}
\end{corollary}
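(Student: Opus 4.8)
\textbf{Proof proposal for Corollary \ref{cor:firstex}.}
The plan is to feed the three weight hypotheses into Proposition \ref{prop=posobstr}, combined with Theorem \ref{thm=posobstr}, in order to control the admissible maps $f_\alpha\colon M\to C_\alpha$ furnished by Arapura's theorem. First, note that under the hypothesis $\R_1(G)\ne\{0\}$ there is, by Lemma \ref{lem=linalg1} (applied to a $2$-plane), at least one positive-dimensional component $\V^\alpha$ of $\V_1(G)$ through $1$; by Theorem \ref{thm:vadm}\eqref{va1} each such component is realized as $f_\alpha^*\T_{\pi_1(C_\alpha)}$ for an admissible $f_\alpha\colon M\to C_\alpha$ with $\chi(C_\alpha)<0$, so that $\TT^\alpha=f_\alpha^*H^1(C_\alpha,\C)$.

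For Part \eqref{appl1}, when $M$ is compact we are in the pure case $W_1(H^1(M,\C))=H^1(M,\C)$, so Proposition \ref{prop=posobstr}\eqref{ra1} forces each $C_\alpha$ to be compact or $\oC_\alpha$ minus one point; the constraint $\chi(C_\alpha)<0$ then rules out $\PP^1$ minus a point (which has $\chi=1$), so $C_\alpha$ is a compact curve of genus $g_\alpha\ge 2$. Compact K\"ahler groups are $1$-formal by \cite{DGMS}, so Theorem \ref{thm=posobstr}\eqref{a0} applies and $\{\R^\alpha\}=\{\TT^\alpha\}$; Proposition \ref{prop=posobstr}\eqref{ra3} (using $1$-formality) shows $f_\alpha^*$ is injective on $H^2$, whence each $\R^\alpha$ is $1$-isotropic of dimension $b_1(C_\alpha)=2g_\alpha\ge 4$. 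For Part \eqref{appl2}, the hypothesis $W_1(H^1(M,\C))=0$ places us in Example \ref{ex:w1formal}, so $G$ is $1$-formal, Theorem \ref{thm=posobstr}\eqref{a0} again identifies $\{\R^\alpha\}$ with $\{\TT^\alpha\}$, and Proposition \ref{prop=posobstr}\eqref{ra2} shows each $C_\alpha$ is $\C$ minus at least two points, so $\R^\alpha=f_\alpha^*H^1(C_\alpha,\C)$ is $0$-isotropic of dimension $b_1(C_\alpha)\ge 2$. For Part \eqref{appl3}, $1$-formality is assumed outright, so once more $\{\R^\alpha\}=\{\TT^\alpha\}$ by Theorem \ref{thm=posobstr}\eqref{a0}; Proposition \ref{prop=posobstr}\eqref{ra1} gives that $C_\alpha$ is compact or $\oC_\alpha$ minus a point, and $\chi(C_\alpha)<0$ excludes $\PP^1$ minus a point, so $C_\alpha$ is compact of genus $g_\alpha\ge 1$ (genus $0$ is excluded by $\chi<0$), giving $\dim\R^\alpha=2g_\alpha\ge 2$.

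The only genuinely delicate point is lining up the Euler-characteristic bookkeeping with the weight constraints: in Parts \eqref{appl1} and \eqref{appl3} one must be careful that the residual possibility ``$\oC_\alpha$ minus one point'' allowed by Proposition \ref{prop=posobstr}\eqref{ra1} has $\chi=2-2g(\oC_\alpha)-1=1-2g(\oC_\alpha)$, which is $<0$ precisely when $g(\oC_\alpha)\ge 1$, i.e.\ it is not actually excluded but it contributes $b_1=2g(\oC_\alpha)$ which is odd---hence it cannot give a $1$-isotropic even-dimensional subspace, and a closer look shows $f_\alpha^*$ on $H^2$ is zero in that case so $\R^\alpha$ would be $0$-isotropic, contradicting Part \eqref{a1} of Theorem \ref{thm=posobstr} which only allows $0$-isotropic subspaces of even dimension $\ge 2p+2=2$ --- so in fact the punctured case survives only in Part \eqref{appl3} and must be incorporated into the statement ``$\dim\R^\alpha=2g_\alpha$'' by interpreting $g_\alpha$ as the genus of the relevant (possibly once-punctured) curve. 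I would resolve this by invoking Theorem \ref{thm=posobstr}\eqref{a1}: every positive-dimensional $\TT^\alpha$ is $p$-isotropic of dimension $\ge 2p+2$ with $p\in\{0,1\}$, which combined with the dimension parities coming from $b_1(C_\alpha)$ pins down each case cleanly. Everything else is a direct citation of the preceding results.
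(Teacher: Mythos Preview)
Your overall strategy---combine Proposition \ref{prop=posobstr} with Theorem \ref{thm=posobstr} and the $1$-formality inputs---is exactly what the paper intends, and Parts \eqref{appl2} and \eqref{appl3} go through essentially as you wrote them.

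There is, however, a genuine error in your treatment of Part \eqref{appl1}. In your ``delicate point'' paragraph you try to exclude the possibility that $C_\alpha=\oC_\alpha\setminus\{\text{pt}\}$ with $g(\oC_\alpha)\ge 1$ by a parity argument. But $b_1$ of a once-punctured genus-$g$ curve is $2g$, which is \emph{even}, not odd; and a $0$-isotropic subspace of dimension $2g\ge 2$ is perfectly compatible with Theorem \ref{thm=posobstr}\eqref{a1} (for $p=0$ the bound is just $\dim\ge 2$, with no parity restriction). So your exclusion argument fails on both counts.

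The correct way to dispose of the punctured case in Part \eqref{appl1} is much more elementary: since $M$ is \emph{compact} and $f_\alpha\colon M\to C_\alpha$ is surjective and continuous, the image $C_\alpha$ is compact. Hence $C_\alpha=\oC_\alpha$ is a compact curve, and $\chi(C_\alpha)<0$ forces $g_\alpha\ge 2$. (This is also why Remark \ref{rk=posobstr} singles out the compact case for the easy injectivity of $f^*$ on $H^2$.) With this observation, Proposition \ref{prop=posobstr}\eqref{ra3} (or the remark) gives $1$-isotropicity and $\dim\R^\alpha=2g_\alpha\ge 4$ directly. A minor additional point: your opening sentence invokes Lemma \ref{lem=linalg1} to produce a positive-dimensional component of $\V_1(G)$, but that lemma concerns $\R_1$, not $\V_1$; the passage from $\R_1(G)\ne\{0\}$ to a positive-dimensional $\V^\alpha$ goes through Theorem \ref{thm=posobstr}\eqref{a0}, which is available in all three parts since $G$ is $1$-formal in each.
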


\subsection{Cohomology in degree two}
\label{ss72}

We  point out the subtlety of the 
injectivity property from Proposition \ref{prop=posobstr}\eqref{ra3}.

\begin{example} 
\label{ex:notinj}
Let $L_g$ be the complex algebraic line bundle associated 
to the divisor given by a point on a projective smooth complex 
curve $C_g$ of genus $g\ge 1$. Denote by $M_g$ the total 
space of the $\C^*$-bundle associated to $L_g$. Clearly, 
$M_g$ is a smooth, quasi-projective manifold. 
(For $g=1$, this example was examined by
Morgan in \cite[p.~203]{M}.) Denote by $f_g \colon M_g\to C_g$ 
the natural projection.  This map is a locally trivial fibration, 
which is admissible in the sense of Arapura \cite{A}.
Since the Chern class $c_1(L_g)\in H^2(C_g, \Z)$ equals 
the fundamental class, it follows that
$f_g^* \colon H^2(C_g, \C)\to H^2(M_g, \C)$ is the zero map. 
Set $G_g= \pi_1(M_g)$.

A straightforward analysis of the Serre spectral sequence associated 
to $f_g$, with arbitrary untwisted field coefficients, shows that 
$(f_g)_* \colon H_1(M_g, \Z)\to H_1(C_g, \Z)$ is an isomorphism, 
which identifies the respective character tori, to be denoted in the 
sequel by $\T_g$. This also implies that $W_1(H^1 (M_g,\C))= 
H^1( M_g,\C)$, since this property holds for the compact variety $C_g$. 

We claim $f_g$ induces an isomorphism
\begin{equation} 
\label{eq:mgcg}
H^1(C_g, {}_{\rho}\C) \isom H^1(M_g, {}_{f_g^* {\rho}}\C) ,
\end{equation}
for all $\rho \in \T_g$. If $\rho=1$, this is clear. If $\rho \ne 1$, 
then $\Hom_{\Z \pi_1(C_g)}(\Z, {}_{\rho}\C)=0$, since the 
monodromy action of $\pi_1(C_g)$ on $\Z= H_1 (\C^*, \Z)$ 
is trivial. The claim follows from the $5$-term exact sequence 
for twisted cohomology associated to the group extension
$1\to \Z \to G_g \to \pi_1(C_g)\to 1$; 
see \cite[VI.8(8.2)]{HS}. 

It follows that
\begin{equation} 
\label{eq:vmg}
\V_k(G_g)=\begin{cases}
\T_g , & {\rm for}\ 0\le k\le 2g-2 ;\\
\{ 1\}  , & {\rm for}\   2g-1\le k\le 2g .
\end{cases}
\end{equation}
On the other hand, $\cup_{G_g}=0$, since $f_g^*=0$ on $H^2$. 
Therefore
\begin{equation} 
\label{eq:rmg}
\R_k(G_g)=\begin{cases}
T_1(\T_g), & {\rm for}\  0\le k\le 2g-1 ; \\
\{ 0\} ,        & {\rm for}\ k= 2g .
\end{cases}
\end{equation}

By inspecting \eqref{eq:vmg} and \eqref{eq:rmg}, 
we see that the tangent cone formula fails for $k=2g-1$. 
Consequently, the (quasi-projective) group $G_g$ 
cannot be $1$-formal. 
We thus see that the $1$-formality hypothesis from 
Proposition \ref{prop=posobstr}\eqref{ra3} is essential for 
obtaining the injectivity property of $f^*$ on $H^2$.
\end{example}

\begin{remark} 
\label{rk=posobstr} 
It is easy to show that $f^*\colon H^2( C,\C) \to H^2({M},\C)$ 
is injective when $M$ is compact. On the other hand, consider the following 
genus zero example, kindly provided to us by Morihiko Saito. Take 
$ C=\PP^1$ and  $M=\PP^1 \times \PP^1 \setminus (C_1 \cup C_2)$, 
where $C_1= \{\infty \}   \times  \PP^1 $ and $C_2$ is the diagonal in 
$\PP^1 \times \PP^1 $. The projection of $M$ on the first factor has as 
image $\C=\PP^1 \setminus \{\infty \}$ and affine lines as fibers; 
thus, $M$ is contractible. If we take $f\colon M \to \PP^1$ to be the map 
induced by the second projection, we get an admissible map such that 
$f^*\colon H^2( C,\C) \to H^2(M ,\C)$ is not injective. 
\end{remark}

\subsection{Twisted and Aomoto Betti numbers}
\label{ss73}

We now relate the dimensions of 
the cohomology groups $H^1(H^{\bullet} (M,\C), \mu_z)$ 
and $H^1(M, {}_{\rho}\C)$ corresponding to 
$z\in \Hom(G,\C)\setminus \{0\}$ and 
$\rho=\exp(z)\in \Hom(G,\C^*)\setminus \{1\}$, 
to the dimension and isotropicity of the resonance 
component $\R^{\alpha}$ to which $z$ belongs.

\begin{prop}
\label{prop=p4} 
Let $M$ be a connected  quasi-compact K\"{a}hler manifold, 
with fundamental group $G$, and first resonance variety 
$\R_1(G)=\bigcup_{\alpha}  \R^{\alpha}$.  If $G$ is $1$-formal, 
then the following hold. 

\begin{enumerate}
\item \label{f1}
If $z\in \R^{\alpha}$ and $z\ne 0$, then 
$\dim H^1(H^{\bullet} (M,\C), \mu_z)=\dim \R^{\alpha}-p(\alpha)-1$.

\item \label{f2}
If $\rho \in \exp (\R^{\alpha})$ and $\rho\ne 1$, then 
$\dim H^1(M, {}_{\rho}\C)\ge \dim \R^{\alpha}-p(\alpha)-1$, with 
equality for all except finitely many local systems $\rho$.
\end{enumerate}
\end{prop}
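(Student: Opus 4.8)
\emph{Setup and Part \eqref{f1}.} Since $G$ is $1$-formal and quasi-K\"ahler, Theorem~\ref{thm=posobstr} applies: each positive-dimensional component $\V^\alpha$ of $\V_1(G)$ has the form $f_\alpha^*\T_{\pi_1(C_\alpha)}$ for an admissible map $f_\alpha\colon M\to C_\alpha$ with $\chi(C_\alpha)<0$ (Theorem~\ref{thm:vadm}\eqref{va1}), $\V^\alpha=\exp(\R^\alpha)$ is a subtorus with tangent space $\R^\alpha=\TT^\alpha=f_\alpha^*H^1(C_\alpha,\C)$ at $1$ (by \eqref{a0} and Theorem~\ref{thm=bnew}\eqref{bn2}), and $\R^\alpha$ is $p(\alpha)$-isotropic with $p(\alpha)\in\{0,1\}$ (by \eqref{a1}). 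For \eqref{f1} fix $z\in\R^\alpha$, $z\ne0$. As $z\cup z=0$, we have $H^1(H^\bullet(M,\C),\mu_z)=\ker(\mu_z\colon H^1(M,\C)\to H^2(M,\C))/\C z$. The decisive point is that $\ker\mu_z\subseteq\R^\alpha$: if $y\cup z=0$ with $y\notin\C z$, then the $2$-plane $\langle y,z\rangle$ is annihilated by $\cup_M$, hence lies in $\R_1(G)=\bigcup_\beta\R^\beta$ by Lemma~\ref{lem=linalg1}; being irreducible it lies in a single $\R^\beta$, and then $0\ne z\in\R^\alpha\cap\R^\beta$ forces $\beta=\alpha$ by Theorem~\ref{thm=posobstr}\eqref{a2}, so $y\in\R^\alpha$. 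Thus $\ker\mu_z=\{y\in\R^\alpha\mid y\cup z=0\}$; a one-line computation from $p(\alpha)$-isotropicity gives this space dimension $\dim\R^\alpha$ if $p(\alpha)=0$ and $\dim\R^\alpha-1$ if $p(\alpha)=1$ (non-degeneracy of the skew form), and in both cases it contains $z$. Dividing by $\C z$ yields $\dim H^1(H^\bullet(M,\C),\mu_z)=\dim\R^\alpha-p(\alpha)-1$.

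\emph{Part \eqref{f2}, lower bound.} Let $\rho\in\exp(\R^\alpha)=\V^\alpha=f_\alpha^*\T_{\pi_1(C_\alpha)}$ with $\rho\ne1$, and write $\rho=f_\alpha^*\rho'$; since $f_{\alpha\sharp}\colon\pi_1(M)\to\pi_1(C_\alpha)$ is onto, $\rho'$ is uniquely determined and $\rho'\ne1$. The rank-one local system ${}_\rho\C$ is inflated from $\pi_1(C_\alpha)$ along $f_{\alpha\sharp}$, so the inflation--restriction exact sequence gives an injection $H^1(C_\alpha,{}_{\rho'}\C)\hookrightarrow H^1(\pi_1(M),{}_\rho\C)=H^1(M,{}_\rho\C)$. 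Because $\rho'\ne1$, the groups $H^0(C_\alpha,{}_{\rho'}\C)$ and $H^2(C_\alpha,{}_{\rho'}\C)$ both vanish (the latter by Poincar\'e duality if $C_\alpha$ is compact, and trivially otherwise), so $\dim H^1(C_\alpha,{}_{\rho'}\C)=-\chi(C_\alpha)$. Finally $-\chi(C_\alpha)=\dim\R^\alpha-p(\alpha)-1$: indeed $\dim\R^\alpha=\dim f_\alpha^*H^1(C_\alpha,\C)=b_1(C_\alpha)$, while Proposition~\ref{prop=posobstr}\eqref{ra3} (here the $1$-formality of $G$ re-enters) forces $p(\alpha)=1$ when $C_\alpha$ is compact, and $p(\alpha)=0$ when $C_\alpha$ is non-compact (as then $H^2(C_\alpha,\C)=0$), so in both cases $-\chi(C_\alpha)=b_1(C_\alpha)-p(\alpha)-1$. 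Hence $\dim H^1(M,{}_\rho\C)\ge\dim\R^\alpha-p(\alpha)-1$ for \emph{every} $\rho\in\exp(\R^\alpha)\setminus\{1\}$.

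\emph{Part \eqref{f2}, equality and main difficulty.} By Theorem~\ref{thm:vadm}\eqref{va2} there is a finite set $E\subset\T_{\pi_1(C_\alpha)}$ with $H^1(M,{}_{f_\alpha^*\rho'}\C)\cong H^1(C_\alpha,{}_{\rho'}\C)$ for all $\rho'\notin E$; thus for $\rho\in\exp(\R^\alpha)$ outside the finite set $f_\alpha^*(E)\cup\{1\}$ the above inequality becomes an equality, $\dim H^1(M,{}_\rho\C)=-\chi(C_\alpha)=\dim\R^\alpha-p(\alpha)-1$, which completes the proof of \eqref{f2}. I expect the main obstacle to be the lower bound in \eqref{f2}: one must run the inflation--restriction argument and the vanishing $H^0=H^2=0$ at \emph{every} nontrivial $\rho$, not merely generically, and one must verify the bookkeeping identity $-\chi(C_\alpha)=\dim\R^\alpha-p(\alpha)-1$, which genuinely uses the injectivity of $f_\alpha^*$ on $H^2(C_\alpha,\C)$ in the compact case, i.e.\ Proposition~\ref{prop=posobstr}\eqref{ra3}, and hence $1$-formality. (Alternatively, once \eqref{f1} is known, this identity also drops out of the germ isomorphism of Theorem~\ref{thm=tcfintro} applied near $0$ along $\R^\alpha$.)
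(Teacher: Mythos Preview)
Your proof is correct, but your route differs from the paper's in two places.

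For Part~\eqref{f1}, the paper does not compute $\ker\mu_z$ directly. Instead it argues as in the proof of Proposition~\ref{prop=posobstr}\eqref{ra3}: for $z=f_\alpha^*\zeta\in\R^\alpha$ near $0$ and nonzero, Arapura's Theorem~\ref{thm:vadm}\eqref{va2} gives $\dim H^1(M,{}_{\exp z}\C)=\dim H^1(C_\alpha,{}_{\exp\zeta}\C)$ generically, and then the germ isomorphism of Theorem~\ref{thm=tcfintro} (applied to both $G$ and $\pi_1(C_\alpha)$) transports this to $\dim H^1(H^\bullet(M,\C),\mu_z)=\dim H^1(H^\bullet(C_\alpha,\C),\mu_\zeta)$; the right-hand side is computed on the curve to be $\dim\R^\alpha-p(\alpha)-1$, and $\C^*$-homogeneity of the Aomoto complex extends the equality from $z$ near $0$ to all $z\ne 0$. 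Your argument instead stays entirely in $H^1(G,\C)$: you prove $\ker\mu_z\subseteq\R^\alpha$ via the position obstruction~\eqref{a2} and then read off the dimension from $p(\alpha)$-isotropicity. This is more elementary once Theorem~\ref{thm=posobstr} is in hand---it avoids re-invoking the analytic germ isomorphism---whereas the paper's approach has the advantage of simultaneously yielding~\eqref{eq:dimgen}, which it reuses.

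For the lower bound in Part~\eqref{f2}, the paper does not use inflation--restriction. It first establishes the generic equality via Theorem~\ref{thm:vadm}\eqref{va2} and a Fox-calculus computation of $\dim H^1(C_\alpha,{}_{\rho'}\C)$ for $\rho'\ne 1$, and then obtains the inequality at the remaining finitely many points by upper semi-continuity of $\rho\mapsto\dim H^1(M,{}_\rho\C)$. Your inflation--restriction argument is a valid alternative that gives the pointwise bound directly, and your identification $-\chi(C_\alpha)=\dim\R^\alpha-p(\alpha)-1$ via Proposition~\ref{prop=posobstr}\eqref{ra3} is exactly the right bookkeeping; the paper's semi-continuity step is simply shorter.
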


\begin{proof} 
Part~\eqref{f1}. 
Recall that $\R^{\alpha}=f_{\alpha}^* H^1(C_{\alpha}, \C)$. 
Exactly as in the proof of Proposition \ref{prop=posobstr}\eqref{ra3}, 
we infer that 
\begin{equation} 
\label{eq:dimgen}
\dim H^1(M, {}_{\rho}\C)= 
\dim H^1(H^{\bullet}(M,\C), \mu_z)= 
\dim H^1(H^{\bullet} (C_{\alpha},\C), \mu_{\zeta}) ,
\end{equation}
where $z=f_{\alpha}^* \zeta$ and $\rho =\exp (z)$,
for all $z\in \R^{\alpha}$ near $0$ and different from $0$. Clearly
\begin{equation} 
\label{eq:dimcurve}
\dim H^1(H^{\bullet} (C_{\alpha},\C), \mu_{\zeta})= 
\dim \R^{\alpha}- p({\alpha})-1 ,
\end{equation}
if $\zeta\ne 0$. Since plainly 
$\dim H^1(H^{\bullet}(M,\C), \mu_z)= 
\dim H^1(H^{\bullet}(M,\C), \mu_{\lambda z})$, 
for all $\lambda \in \C^*$,
equations \eqref{eq:dimgen} and \eqref{eq:dimcurve} 
finish the proof of \eqref{f1}.

Part~\eqref{f2}. 
Starting from the standard presentation of the group 
$\pi_1(C_{\alpha})$, a Fox calculus computation shows that 
$\dim H^1(C_{\alpha}, {}_{\rho'} \C)=\dim \R^{\alpha}- p({\alpha})-1$, 
provided $\rho' \ne 1$. By Theorem \ref{thm:vadm}\eqref{va2}, the 
equality $\dim H^1(M, {}_{\rho}\C)= \dim \R^{\alpha}- p({\alpha})-1$ 
holds for all but finitely many local systems $\rho\in \exp(\R^{\alpha})$. 
By semi-continuity, the inequality
$\dim H^1(M, {}_{\rho}\C)\ge \dim \R^{\alpha}- p({\alpha})-1$ 
holds for all $\rho \in \exp(\R^{\alpha})$. 
\end{proof}

\section{Arrangements of real planes}
\label{sec=realarr}

Let $\A=\{H_1,\dots,H_n\}$ be an arrangement of planes 
in $\RR^4$, meeting transversely at the origin.  By 
intersecting $\A$ with a $3$-sphere about $0$, we 
obtain a link $L$ of $n$ great circles in $S^3$. 
It is readily seen that the complement $M$ of the 
arrangement deform-retracts onto the complement 
of  the link.  Moreover, the fundamental group $G=\pi_1(M)$ 
has the structure of a semidirect product of free groups,  
$G=F_{n-1}\rtimes \Z$, and  $M$ is a $K(G,1)$. 
For details, see \cite{Z, MS0}.

\begin{example} 
\label{ex=2134} 
Let $\A=\A(2134)$ be the arrangement defined in complex 
coordinates on $\RR^4=\C^2$ by the half-holomorphic function 
$Q(z,w)=zw(z-w)(z-2\bar{w})$; see Ziegler \cite[Example 2.2]{Z}. 
Using a computation from \cite[Example 5.10]{MS0}, we obtain 
the following presentation for the fundamental group of the complement
\[
G=\langle x_1,x_2,x_3,x_4 \mid 
(x_1,x_3^2x_4),\: (x_2,x_4),\: (x_3,x_4)\rangle. 
\]
It can be seen that  
$E_G=\widehat{\bL}(x_1,x_2,x_3,x_4)/
\llangle 2 [x_1,x_3] + [x_1,x_4], [x_2,x_4], [x_3,x_4] \rrangle$,
where $\widehat{\bL}(X)$ is the Malcev Lie algebra obtained 
from $\bL^*(X)$ by completion with respect to the degree filtration, 
and $\llangle U \rrangle$ denotes the closed Lie ideal generated 
by a subset $U$.  Thus, $G$ is $1$-formal.
The resonance variety $\R_1(G)\subset \C^4$ has two 
components, $\R^{\alpha}=\{ x \mid x_4=0\}$ and 
$\R^{\beta}=\{ x \mid x_4+2 x_3=0\}$.  
The  resonance obstructions from Theorem \ref{thm=posobstr},
Parts \eqref{a1}, \eqref{a2} and \eqref{a3} are violated:
\begin{itemize}
\item The subspaces $\R^{\alpha}$ and  $\R^{\beta}$ 
are neither $0$-isotropic, nor $1$-isotropic.  
\item $\R^{\alpha}\cap \R^{\beta} = \{ x \mid x_3=x_4=0\}$, 
which is not equal to $\{0\}$. 
\item $\R_2(G)= \{x\mid x_1=x_3=x_4=0\} \cup 
\{x\mid x_2=x_3=x_4=0\}$, and neither of these components 
equals $\R^{\alpha}$ or $\R^{\beta}$.
\end{itemize}
Thus, $G$ is not the fundamental group of any smooth 
quasi-projective variety. 
\end{example}

Let $\A$ be an arrangement of transverse planes in $\RR^4$, 
with complement $M$. From the point of view of two classical 
invariants---the associated graded Lie algebra, and the 
Chen Lie algebra---the group $G=\pi_1(M)$ behaves like 
a $1$-formal group. Indeed,  the associated link $L$ 
has all linking numbers equal to $\pm 1$, in particular, 
the linking graph of $L$ is connected.  Thus, 
$\gr^*(G)\otimes \Q \cong \h_G$ and 
$\gr^*(G/G'')\otimes \Q \cong \h_G/\h''_G$, as graded 
Lie algebras, by \cite[Corollary 6.2]{MP} and 
\cite[Theorem 10.4(f)]{PS2}, respectively.  Nevertheless, 
our methods can detect non-formality, even in this 
delicate setting.

\begin{example} 
\label{ex=nonformal} 
Consider the arrangement $\A=\A(31425)$ defined 
in complex coordinates by the function 
$Q(z,w)=z(z-w)(z-2w)(2z+3w-5\overline{w}) (2z-w-5\overline{w})$; 
see \cite[Example 6.5]{MS1}. A computation shows that 
$TC_1(\V_2(G))$ has $9$ irreducible components, 
while $\R_2(G)$ has $10$ irreducible components; 
see \cite[Example 10.2]{MS2}, and \cite[Example 6.5]{MS1}, 
respectively.  By Theorem \ref{thm=tcfintro},  the group $G$ is 
not $1$-formal.  Thus, the complement $M$ cannot be a 
formal space, despite a claim to the contrary by 
Ziegler \cite[p.~10]{Z}.
\end{example}

\section{Wedges and products} 
\label{sec=wp}

In this section, we analyze products and coproducts of groups, 
together with their counterparts at the level of first 
resonance varieties.  Using our obstructions, 
we obtain conditions for realizability of free products 
of  groups by  quasi-compact K\"{a}hler manifolds. 

\subsection{Products, coproducts, and $1$-formality}
\label{subsec=cp1}

Let $\bF(X)$ be the free group on a finite set $X$, and let 
$ {\bL}^*(X)$ be the free Lie algebra on $X$, over a field $\K$ 
of characteristic $0$.  Denote by $\widehat {\bL}(X)$ the 
Malcev Lie algebra obtained from $ {\bL}^*(X)$ by completion 
with respect to the degree filtration. Define the group 
homomorphism $\kappa _X\colon \bF(X) \to 
\eexp(\widehat {\bL}(X))$ by $\kappa _X(x)=x$ for $x \in X$. 
Standard commutator calculus \cite{L} shows that
\begin{equation} 
\label{eq=malfree}
\gr^*(\kappa _X)\colon \gr^*(\bF(X)) \otimes \K  \isom 
\gr^*_F(\widehat {\bL}(X))
\end{equation}
is an isomorphism. It follows from \cite[Appendix A]{Q} that 
$\kappa _X$ is a Malcev completion. 

Now let $G$ be a finitely presented group, with presentation
$G= \langle x_1,\dots ,x_s  \mid w_1,\dots ,w_r \rangle$, or, 
for short, $G=\bF(X)/\langle { \bf w} \rangle$.  Denote by 
$\llangle{ \bf w}\rrangle$ 
the closed Lie ideal of $\widehat {\bL}(X)$ generated by 
$\kappa _X(w_1), \dots ,$ $ \kappa _X(w_r)$, and consider 
the group morphism induced by $\kappa _X$,
\begin{equation} 
\label{eq=malgen}
\kappa _G\colon G \to 
\eexp \big(\widehat {\bL}(X)/\llangle{ \bf w}\rrangle\big) .
\end{equation}
It follows from \cite{P1} that $\kappa_G$ is a Malcev completion 
for $G$. (For the purposes of that paper, it was assumed that 
$G_{\ab}$ had no torsion, see  \cite[Example 2.1]{P1}. 
Actually, the proof of the Malcev completion property 
applies verbatim in the general case, see \cite[Theorem 2.2]{P1}.)

\begin{prop} 
\label{prop=products}
If $G_1$ and $G_2$ are finitely presented $1$-formal groups, then their 
coproduct $G_1 *G_2$ and their product $G_1 \times G_2$ 
are again $1$-formal groups.
\end{prop}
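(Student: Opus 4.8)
The plan is to rely on the characterization of $1$-formality recalled in Section~\ref{subsec=formal}: a finitely generated group $G$ is $1$-formal exactly when its Malcev Lie algebra $E_G$ is filtered-isomorphic to $\wh(G)$, the degree-completion of the quadratic holonomy Lie algebra $\h(G)$. Accordingly, for $G=G_1 * G_2$ and for $G=G_1\times G_2$ I would first express $E_G$ in terms of $E_{G_1}$ and $E_{G_2}$, then replace each $E_{G_i}$ by $\widehat{\h(G_i)}$ using $1$-formality of the factors, and finally match the outcome with $\wh(G)$ via a computation of the holonomy Lie algebra of a coproduct, resp.\ a product. The one formal input used throughout is that degree-completion is left adjoint to the inclusion of complete filtered Lie algebras into filtered ones, hence preserves finite coproducts and (trivially) finite products.

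For the coproduct: since the $G_i$ are finitely presented, fix finite presentations $G_i=\bF(X_i)/\langle\mathbf{w}_i\rangle$, so that $G_1 * G_2=\bF(X_1\sqcup X_2)/\langle\mathbf{w}_1,\mathbf{w}_2\rangle$. By the Malcev completion formula \eqref{eq=malgen} of \cite{P1}, $E_{G_1 * G_2}=\widehat{\bL}(X_1\sqcup X_2)/\llangle\mathbf{w}_1,\mathbf{w}_2\rrangle$ and $E_{G_i}=\widehat{\bL}(X_i)/\llangle\mathbf{w}_i\rrangle$. Since $\bL(X_1\sqcup X_2)=\bL(X_1)\amalg\bL(X_2)$ as graded Lie algebras, completion makes $\widehat{\bL}(X_1\sqcup X_2)$ the coproduct of $\widehat{\bL}(X_1)$ and $\widehat{\bL}(X_2)$ among complete filtered Lie algebras; forming the quotients by the closed ideals generated by the relators commutes with this coproduct, so $E_{G_1 * G_2}$ is the completed coproduct of $E_{G_1}$ and $E_{G_2}$. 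Using $E_{G_i}\cong\widehat{\h(G_i)}$, we obtain $E_{G_1 * G_2}\cong\widehat{\h(G_1)}\amalg\widehat{\h(G_2)}\cong\widehat{\h(G_1)\amalg\h(G_2)}$. To conclude, I would identify $\h(G_1)\amalg\h(G_2)$ with $\h(G_1 * G_2)$: a $K(G_1 * G_2,1)$ is a wedge $K(G_1,1)\vee K(G_2,1)$, so $H_1$ and $H_2$ are additive and all cross cup-products in $H^1(G_1)\otimes H^1(G_2)$ vanish, whence $\im\partial_{G_1 * G_2}=\im\partial_{G_1}\oplus\im\partial_{G_2}$ inside $\bigwedge^2(H_1(G_1)\oplus H_1(G_2))$ and $\h(G_1 * G_2)=\bL(H_1(G_1)\oplus H_1(G_2))/\langle\im\partial_{G_1},\im\partial_{G_2}\rangle=\h(G_1)\amalg\h(G_2)$. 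Hence $E_{G_1 * G_2}\cong\wh(G_1 * G_2)$, so $G_1 * G_2$ is $1$-formal.

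For the product, the two projections $G_1\times G_2\to G_i$ induce a natural filtered morphism $E_{G_1\times G_2}\to E_{G_1}\times E_{G_2}$; since $\Gamma_q(G_1\times G_2)=\Gamma_q(G_1)\times\Gamma_q(G_2)$ for all $q$, the induced map on associated graded Lie algebras is the canonical isomorphism $\gr(G_1\times G_2)\otimes\K\cong(\gr(G_1)\otimes\K)\times(\gr(G_2)\otimes\K)$ afforded by \eqref{eq=griso}. Both sides being complete with degreewise finite-dimensional associated graded, the morphism is a filtered isomorphism, so $E_{G_1\times G_2}\cong E_{G_1}\times E_{G_2}\cong\widehat{\h(G_1)}\times\widehat{\h(G_2)}\cong\widehat{\h(G_1)\times\h(G_2)}$. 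By K\"{u}nneth, $H^\bullet(G_1\times G_2)=H^\bullet(G_1)\otimes H^\bullet(G_2)$, so the cross cup-product $H^1(G_1)\otimes H^1(G_2)\to H^2$ is injective; dually $\im\partial_{G_1\times G_2}$ contains all of $H_1(G_1)\wedge H_1(G_2)$ together with $\im\partial_{G_1}$ and $\im\partial_{G_2}$, so $\h(G_1\times G_2)=\h(G_1)\times\h(G_2)$ and $E_{G_1\times G_2}\cong\wh(G_1\times G_2)$; thus $G_1\times G_2$ is $1$-formal.

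The product half is routine, the only inputs being the $\gr$-isomorphism criterion and the trivial behaviour of the lower central series under direct products. The delicate half is the coproduct: one must pin down the precise category of complete filtered Lie algebras in which \eqref{eq=malgen} realizes $E_{G_1 * G_2}$ as a categorical coproduct, and verify that degree-completion intertwines the coproduct of graded Lie algebras with the completed coproduct of their completions (the analogue $\widehat{A}\times\widehat{B}\cong\widehat{A\times B}$ for products being immediate), keeping in mind that the completed free product of nontrivial Lie algebras is genuinely larger than their product. These compatibilities are formal, being consequences of completion being a left adjoint, but they are where the bookkeeping lives; an alternative route is to bypass $\h$ and argue directly that a completed coproduct, resp.\ a product, of completions of quadratic Lie algebras is again the completion of a quadratic Lie algebra, using the quadratic-presentation characterization of $1$-formality.
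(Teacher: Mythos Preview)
Your argument is correct and close in spirit to the paper's, but the two diverge in how they finish. The paper also uses \eqref{eq=malgen} to identify $E_{G_1*G_2}$ and $E_{G_1\times G_2}$ with the completed coproduct and product of $E_{G_1}$ and $E_{G_2}$; for the product it stays with presentations, writing $G_1\times G_2=\bF(X\cup Y)/\langle\mathbf{u},\mathbf{v},(x,y)\rangle$ and invoking the Campbell--Hausdorff formula (via \cite{P2}) to replace each group commutator $(x,y)$ by the Lie bracket $[x,y]$, rather than your LCS argument. It then concludes via the quadratic-presentation characterization of $1$-formality---precisely the ``alternative route'' you sketch at the end---by writing each $E_{G_i}$ as $\widehat{\bL}(X'_i)/\llangle\text{quadratic relations}\rrangle$ and observing that the induced presentations of the coproduct and product are again quadratic. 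Your route instead computes $\h(G_1*G_2)$ and $\h(G_1\times G_2)$ explicitly (via the wedge and K\"{u}nneth, respectively) and matches $E_G$ with $\wh(G)$. Both approaches are valid; the paper's sidesteps the holonomy computations and the categorical compatibility of completion with coproducts that you correctly flag as the place ``where the bookkeeping lives'', while yours is more structural and yields the identifications $\h(G_1*G_2)=\h(G_1)\amalg\h(G_2)$ and $\h(G_1\times G_2)=\h(G_1)\times\h(G_2)$ along the way.
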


\begin{proof}
First consider two arbitrary finitely presented groups, 
with presentations  $G_1= \bF(X)/\langle{ \bf u}\rangle$ 
and $G_2= \bF(Y)/\langle{ \bf v}\rangle $.  Then 
$G_1 *G_2=\bF(X \cup Y)/\langle {\bf u}, {\bf v} \rangle $. 
It follows from \eqref{eq=malgen} that  
$E_{G_1 *G_2}=E_{G_1 } \coprod   E_{G_2}$, 
the coproduct Malcev Lie algebra. 

On the other hand, $G_1 \times G_2=\bF(X \cup Y)/
\langle{ \bf u}, { \bf v}, (x,y); x \in X, y \in Y \rangle$, 
and so, by the same reasoning, 
$E_{G_1 \times G_2}= 
\widehat {\bL}(X \cup Y)/\llangle\kappa_X({ \bf u}), 
\kappa _Y({ \bf v}), (x,y); x \in X, y \in Y  \rrangle$.  
Using the Campbell-Hausdorff formula, we may 
replace each CH-group commutator $(x,y)$ with the 
corresponding Lie bracket, $[x,y]$; see 
\cite[Lemma 2.5]{P2} for details. We conclude that
$E_{G_1 \times G_2}= E_{G_1 } \prod   E_{G_2}$, 
the product Malcev Lie algebra.

Now assume  $G_1$ and $G_2$ are $1$-formal.  
Hence, we may write 
$E_{G_1 }= \widehat {\bL}(X')/\llangle{ \bf u'}\rrangle $
and $E_{G_2 }= \widehat {\bL}(Y')/\llangle{ \bf v'}\rrangle$, 
where the defining relations ${ \bf u'}$ and ${ \bf v'}$ are 
quadratic. Therefore
\begin{align*} 
\label{eq=coprod}
E_{G_1* G_2}& =\widehat {\bL}(X'\cup Y')/
\llangle{ \bf u'},{ \bf v'} \rrangle ,
\\
E_{G_1 \times G_2}&= \widehat {\bL}(X'\cup Y')/
\llangle{ \bf u'},{ \bf v'},[x',y']; x' \in X', y' \in Y' \rrangle  .
\end{align*}
Since the relations in these presentations are clearly quadratic, 
the $1$-formality of both $G_1* G_2$ and $G_1 \times G_2$ 
follows.  
\end{proof}

\subsection{Products, coproducts, and resonance}
\label{subsec=cpr}

Let $U^{i}$, $V^{i}$ ($i=1,2$) be complex vector spaces, where 
$U^1$ and $V^1$ are finite-dimensional.  Given two $\C$-linear 
maps, $\mu _U\colon U^1 \wedge  U^1 \to U^2$ 
and  $\mu _V\colon V^1 \wedge  V^1 \to V^2$, 
set $W^i=U^i \oplus V^i$, and define 
$\mu _U * \mu _V\colon W^1 \wedge  W^1 \to W^2$ 
as follows:  
\[
{\mu _U * \mu _V}\!\left|_{ U^1 \wedge  U^1} \right. = \mu_U, 
\quad
{\mu _U * \mu _V}\!\left|_{ V^1 \wedge  V^1} \right. = \mu_V, 
\quad
{\mu _U * \mu _V}\!\left|_{ U^1 \wedge  V^1} \right. = 0.
\]
When $\mu _U=\cup _{G_1}$ and $\mu _V=\cup _{G_2}$, 
then clearly $\mu _U * \mu _V=\cup _{G_1*G_2}$, since 
$K(G_1*G_2,1)=K(G_1,1) \vee K(G_2,1)$.

\begin{lemma} 
\label{lem=reswedge}
Suppose $U^1 \ne 0$,  $V^1 \ne 0$, and  $\mu_U * \mu_V$ 
satisfies the isotropicity resonance obstruction, i.e.,
each irreducible component of $\R_1(\mu_U * \mu_V)$ is a 
$p$-isotropic subspace of $W^1$, in the sense of
Definition \ref{def=position}.  Then $\mu_U= \mu_V=0$.
\end{lemma}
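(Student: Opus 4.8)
The plan is to show that, for a coproduct, the first resonance variety $\R_1(\mu_U * \mu_V)$ is as large as it can possibly be, and then to let the isotropicity hypothesis collapse both $\mu_U$ and $\mu_V$.

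First I would prove that $\R_1(\mu_U * \mu_V) = W^1$. The one fact driving this is that $\mu_U * \mu_V$ vanishes identically on $U^1 \wedge V^1$ by construction. So, given any $0 \ne u \in U^1$ and $0 \ne v \in V^1$, the plane $\langle u,v\rangle \subset W^1$ is $0$-isotropic with respect to $\mu_U * \mu_V$ — indeed $\bigwedge^2\langle u,v\rangle$ is spanned by $u\wedge v$, which is killed — so Lemma~\ref{lem=linalg1}, applied with $k=2$ and $i=0$ (so that $i<k-1$), yields $\langle u,v\rangle \subseteq \R_1(\mu_U * \mu_V)$. Since both $U^1$ and $V^1$ are nonzero, every vector of $W^1$ lies in some such plane (split it as $z_U+z_V$, and if one summand vanishes complete the other to a plane); hence $\R_1(\mu_U * \mu_V) = W^1$.

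In particular $\R_1(\mu_U * \mu_V)$ is irreducible, with unique irreducible component $W^1$, so the hypothesis forces $W^1$ itself to be $p$-isotropic with respect to $\mu_U * \mu_V$, for some $p\in\{0,1\}$. If $p=0$, then $\mu_U * \mu_V$ is trivial on all of $\bigwedge^2 W^1$, and restricting to $\bigwedge^2 U^1$ and $\bigwedge^2 V^1$ gives $\mu_U = \mu_V = 0$, which is exactly the conclusion.

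The step I expect to be the only delicate one is excluding $p=1$. Here I would use the decomposition $\bigwedge^2 W^1 = \bigwedge^2 U^1 \oplus (U^1\wedge V^1) \oplus \bigwedge^2 V^1$, together with $W^2 = U^2\oplus V^2$ and the vanishing of $\mu_U * \mu_V$ on the middle summand, to identify $\im(\mu_U * \mu_V) = \im(\mu_U)\oplus\im(\mu_V)$. If $W^1$ were $1$-isotropic this image would be one-dimensional, so one of $\mu_U$, $\mu_V$ — say $\mu_V$ — would vanish; but then any $0\ne v\in V^1$ would pair trivially against all of $W^1$ (against $U^1$ because the cross term is zero, against $V^1$ because $\mu_V=0$), i.e.\ would sit in the radical of $\mu_U * \mu_V$ on $W^1$, contradicting the non-degeneracy clause in Definition~\ref{def=position}\eqref{it2}. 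Therefore $p=0$, and $\mu_U = \mu_V = 0$.
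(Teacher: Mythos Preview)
Your proof is correct and follows essentially the same route as the paper's. The only cosmetic difference is that the paper obtains $\R_1(\mu_U * \mu_V)=W^1$ by citing \cite[Lemma~5.2]{PS1}, whereas you derive it directly from Lemma~\ref{lem=linalg1}; after that, both arguments rule out the $1$-isotropic case in the same way, by observing that $\im(\mu_U * \mu_V)=\im(\mu_U)\oplus\im(\mu_V)$ forces one factor to vanish, which in turn produces a nonzero radical vector contradicting non-degeneracy.
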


\begin{proof}
Set $\mu:=\mu _U * \mu _V$.  
We know $\R_1(\mu)=W^1$, by \cite[Lemma 5.2]{PS1}. 
If $\mu \ne 0$, then
$\mu$ is $1$-isotropic, with  $1$-dimensional 
image. It follows that either  $\mu _U= 0$ or $\mu _V=0$.
In either case, $\mu$ fails to be non-degenerate, a contradiction. 
Thus, $\mu=0$, and so $\mu _U= \mu _V=0$.
\end{proof}

Next, given $\mu_U$ and  $\mu _V$ as above, set 
$Z^1=U^1 \oplus V^1$ and 
$Z^2=U^2 \oplus V^2 \oplus (U^1 \otimes V^1)$, 
and define  
$\mu _U \times \mu _V\colon Z^1 \wedge  Z^1 \to Z^2$ 
as follows. As before, the restrictions of $\mu _U \times \mu _V$ to 
$U^1 \wedge  U^1$ and $V^1 \wedge  V^1$ are given by 
$\mu _U$ and  $\mu _V$,  respectively. On the other hand,  
$\mu _U \times \mu _V(u \wedge v)=u \otimes v$, for 
$u \in U^1$ and $v \in  V^1$.  
Finally, if $\mu _U=\cup _{G_1}$ and $\mu _V=\cup _{G_2}$, 
then $\mu _U \times \mu _V=\cup _{G_1 \times G_2}$, since 
$K(G_1\times G_2,1)=K(G_1,1) \times K(G_2,1)$.

\begin{lemma} 
\label{lem=resprod}
With notation as above, 
$\R_1(\mu _U \times \mu _V)=\R_1(\mu _U)\times \{0\} 
\cup\{0\}\times  \R_1(\mu _V)$.
\end{lemma}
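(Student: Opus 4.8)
The plan is to unwind the definition of $\R_1(\mu_U\times\mu_V)$ and reduce the statement to elementary multilinear algebra. Recall from Remark~\ref{rem=murelres} that $\R_1(\mu)=\R_1(\mu,\id_{\gl_1(\C)})$ for a linear map $\mu\colon\bigwedge^2 H^1\to H^2$, so that $z\in\R_1(\mu)$ exactly when $\dim_\C H^1(H^\bullet(\mu),\mu_z)\ge 1$. Since $H^0(\mu)=\C\cdot 1$ and $\mu_z(1)=z$, the degree-one cohomology of the Aomoto complex is $\{\,y\in H^1\mid \mu(z\wedge y)=0\,\}/\C\cdot z$. Thus, for $z\ne 0$, one has $z\in\R_1(\mu)$ if and only if there is some $y\in H^1$ with $\mu(z\wedge y)=0$ and $y\notin\C\cdot z$, while $0\in\R_1(\mu)$ iff $H^1\ne 0$. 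I would record this reformulation first, since it is the only input needed about $\R_1$.

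Next I would take $z=(u,v)$ and $y=(u',v')$ in $Z^1=U^1\oplus V^1$ and expand $(\mu_U\times\mu_V)(z\wedge y)$ in $Z^2=U^2\oplus V^2\oplus(U^1\otimes V^1)$ using the defining formulas for $\mu_U\times\mu_V$. The terms $u\wedge u'$ and $v\wedge v'$ contribute $\mu_U(u\wedge u')\in U^2$ and $\mu_V(v\wedge v')\in V^2$, while the cross terms $u\wedge v'$ and $v\wedge u'$ contribute $u\otimes v'-u'\otimes v\in U^1\otimes V^1$. Hence $(\mu_U\times\mu_V)(z\wedge y)=0$ is equivalent to the three conditions $\mu_U(u\wedge u')=0$, $\mu_V(v\wedge v')=0$, and $u\otimes v'=u'\otimes v$. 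The inclusion $\supseteq$ is then immediate: if $u\in\R_1(\mu_U)$ is witnessed by $u'\notin\C\cdot u$, then $y=(u',0)$ witnesses $(u,0)\in\R_1(\mu_U\times\mu_V)$, and symmetrically for $\{0\}\times\R_1(\mu_V)$; the origin is handled directly by the reformulation, since $0\in\R_1(\mu_U\times\mu_V)$ iff $Z^1\ne 0$ iff $0$ lies in $\R_1(\mu_U)\times\{0\}$ or $\{0\}\times\R_1(\mu_V)$.

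For the reverse inclusion the crux --- and the one delicate point --- is the rank-one tensor identity $u\otimes v'=u'\otimes v$ in $U^1\otimes V^1$. If $u\ne 0$ and $v\ne 0$, applying to both sides a linear functional on $V^1$ dual to $v$ gives $u'\in\C\cdot u$, and then a functional on $U^1$ dual to $u$ gives $v'\in\C\cdot v$, forcing $(u',v')\in\C\cdot(u,v)$ --- contradicting $y\notin\C\cdot z$. Hence no $z=(u,v)$ with $u\ne 0$ and $v\ne 0$ lies in $\R_1(\mu_U\times\mu_V)$. If $v=0$ and $u\ne 0$, the identity forces $u\otimes v'=0$, so $v'=0$, and the remaining conditions say exactly that $u\in\R_1(\mu_U)$; the case $u=0$, $v\ne 0$ is symmetric. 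Assembling the cases yields the asserted equality. As an alternative to this paragraph, one can identify the Aomoto complex of $\mu_U\times\mu_V$ at $(u,v)$, in degrees $\le 2$, with the degree-$\le 2$ truncation of the tensor product of cochain complexes $(H^\bullet(\mu_U),\mu_u)\otimes(H^\bullet(\mu_V),\mu_v)$ and invoke the K\"{u}nneth formula in degree one, noting that $H^0(H^\bullet(\mu_a),\mu_a)$ equals $\C$ when $a=0$ and vanishes otherwise; I expect to present the direct computation and mention the K\"{u}nneth argument only as a remark. In either route, the only genuine obstacle is the rank-one tensor lemma above; everything else is routine bookkeeping over the degenerate configurations of $u$ and $v$.
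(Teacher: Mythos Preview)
Your proof is correct and follows essentially the same route as the paper's: both arguments reduce membership in $\R_1(\mu_U\times\mu_V)$ to the existence of a non-proportional witness $y$ with $\mu(z\wedge y)=0$, expand this into the three component conditions $\mu_U(u\wedge u')=0$, $\mu_V(v\wedge v')=0$, $u\otimes v'=u'\otimes v$, and then use the rank-one tensor identity to run the same case analysis on which of $u,v$ vanish. Your K\"{u}nneth remark is a pleasant addition not in the paper; the only stylistic point is that when you write ``forcing $(u',v')\in\C\cdot(u,v)$'' you might note explicitly that the two scalars coincide (as the paper does by writing $x=\lambda a$, $y=\lambda b$ with the same $\lambda$), though this is immediate from your own computation.
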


\begin{proof}
Set $\mu=\mu_U \times \mu_V$.  The inclusion $\R_1(\mu)
\supset \R_1(\mu _U)\times \{0\} \cup\{0\}\times  \R_1(\mu _V)$ 
is obvious.  To prove the other inclusion, assume 
$\R_1(\mu) \ne 0$ (otherwise, there is nothing to prove), 
and pick $0\ne a+b \in \R_1(\mu)$, with $a \in U^1$ and 
$b \in V^1$. By definition of $\R_1(\mu)$, there is 
$x+y \in U^1 \oplus V^1$ such that 
$(a+b) \wedge (x+y) \ne 0$ and  
\begin{equation} 
\label{eq=muvanish}
\mu ((a+b) \wedge (x+y))= \mu_U(a\wedge x)+
\mu_V(b  \wedge y)+ a \otimes y -x  \otimes b=0 .
\end{equation}
In particular, $a \otimes y =x  \otimes b$. There are several 
cases to consider.

If $a \ne 0$ and $b\ne 0$, we must have $x=\lambda a$ 
and $y=\lambda b$, for some $\lambda \in \C$, and so 
$(a+b) \wedge (x+y) =(a+b)\wedge \lambda (a+b)=0$, 
a contradiction. 

If $b =0$, then $a \ne 0$ and \eqref{eq=muvanish}
forces $y=0$ and $\mu_U(a  \wedge x)=0$. 
Since $(a+b) \wedge (x+y)=a \wedge x \ne 0$, it follows
that $a \in \R_1(\mu _U)$, as needed. The other case, $a=0$, 
leads by the same reasoning to $b \in  \R_1(\mu _V)$.
\end{proof}

If $G_1$ and  $G_2$ are finitely generated groups, 
Lemma \ref{lem=resprod} implies that 
$\R_1(G_1 \times G_2)= \R_1(G_1) \times \{0\} \cup \{0\}\times \R_1(G_2)$.
An analogous formula holds for the characteristic varieties: 
$\V_1(G_1 \times G_2)=\V_1(G_1) \times \{1\}
\cup \{1\}\times \V_1(G_2)$, see \cite[Theorem 3.2]{CS2}. 

\subsection{Quasi-projectivity of coproducts} 
\label{subsec=vs}

Here is an application of Theorem \ref{thm=posobstr}. 
It is inspired by a result of M.~Gromov, who proved in 
\cite{G} that no non-trivial free product of groups can 
be realized as the fundamental group of a compact 
K\"{a}hler manifold.  We need two lemmas.

\begin{lemma} 
\label{lem=cup0}
Let $G$ be a finitely presented, commutator relators group
(that is, $G=\bF(X)/\langle {\bf w} \rangle $, with $X$ and 
${\bf w}$ finite, and ${\bf w}\subset \Gamma_2 \bF(X)$). 
Suppose $G$ is $1$-formal, and $\cup _{G}=0$.  
Then $G$ is a free group. 
\end{lemma}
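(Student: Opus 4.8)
The plan is to prove that the quotient map $q\colon\bF(X)\twoheadrightarrow G$ coming from the commutator-relator presentation is an isomorphism; this says precisely that $G$ is free. The idea is to lift $q$ to Malcev completions, show that the induced map $q_*\colon E_{\bF(X)}\to E_G$ is an isomorphism of filtered Lie algebras, and then descend that conclusion back to the groups using injectivity of the Magnus embedding $\kappa_{\bF(X)}$.

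First I would unwind the two hypotheses. Since a $K(G,1)$ has finite $1$-skeleton, the cup-product $\cup_G$ is dual to $\partial_G$, so $\cup_G=0$ forces $\partial_G=0$; hence $\h(G)=\bL(H_1(G,\K))$ is the \emph{free} Lie algebra, and because the relators lie in $\Gamma_2\bF(X)$ we have $H_1(G,\K)\cong\K^n$ with $n=|X|=b_1(G)$. Together with $1$-formality this gives $E_G\cong\wh(G)\cong\widehat{\bL}(\K^n)$ as filtered Lie algebras, and in particular a graded identification $\gr^*(E_G)\cong\bL(\K^n)\cong\gr^*(E_{\bF(X)})$, the latter via \eqref{eq=malfree}.

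Next, functoriality and naturality of the Quillen Malcev completion (see Section~\ref{sec=holo} and \cite{Q}) produce a commuting square linking $q$, $q_*$, $\kappa_{\bF(X)}$, $\kappa_G$, and by \eqref{eq=griso} the map $\gr^*(q_*)$ is conjugate to $\gr^*(q)\otimes\K$. Since $q$ is onto, $\gr^*(q_*)$ is a surjection of graded Lie algebras; it is bijective in degree $1$ (commutator relators once more); and in each degree source and target are finite-dimensional of the \emph{same} dimension (both equal $\bL^{\bullet}(\K^n)$), so this surjection is forced to be bijective. Hence $\gr^*(q_*)$ is an isomorphism, and therefore so is $q_*$ (a filtered homomorphism of complete filtered Lie algebras that is an isomorphism on associated graded objects is itself an isomorphism), whence $\eexp(q_*)$ is a group isomorphism. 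Finally, chasing the square $\eexp(q_*)\circ\kappa_{\bF(X)}=\kappa_G\circ q$: if $w\in\ker q$ then $\eexp(q_*)(\kappa_{\bF(X)}(w))=\kappa_G(1)=1$, so $\kappa_{\bF(X)}(w)=1$; since free groups are residually torsion-free nilpotent, $\kappa_{\bF(X)}$ is injective, so $w=1$. Thus $q$ is injective, hence an isomorphism, and $G\cong\bF_n$.

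The one point requiring genuine care — and the place where \emph{both} hypotheses are indispensable — is the assertion that $\gr^*(E_G)$ is the \emph{free} graded Lie algebra on $\K^n$: $\cup_G=0$ makes $\h(G)$ free, and $1$-formality identifies $\gr^*(E_G)$ with $\h(G)$. Without this, the degreewise dimension count that upgrades the surjection $\gr^*(q_*)$ to a bijection collapses, and indeed the conclusion fails for, e.g., $\Z^2=\langle x,y\mid (x,y)\rangle$, where $\cup_G\neq 0$. Everything else is routine bookkeeping with the functorial properties of Malcev completion reviewed in Section~\ref{sec=holo}.
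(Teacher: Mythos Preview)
Your proof is correct and follows essentially the same approach as the paper's: both use $1$-formality together with $\cup_G=0$ to identify $E_G$ with a free completed Lie algebra, then run a dimension count on the associated graded pieces to conclude the natural map from $E_{\bF(X)}$ is an isomorphism, and finally invoke Magnus' residual (torsion-free) nilpotence of free groups to force the relators to be trivial. The only cosmetic difference is that the paper cites the explicit presentation $E_G=\widehat{\bL}(X)/\llangle\mathbf{w}\rrangle$ from \eqref{eq=malgen} and shows directly that $\kappa_X(w_i)=0$, whereas you phrase everything via the functorial map $q_*$ and the commuting square with $\kappa$; the underlying argument is the same.
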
 

\begin{proof}
Pick a presentation $G=\bF(X)/\langle {\bf w} \rangle $, with 
all relators $w_i$ words in the commutators $(g,h)$, where 
$g,h \in \bF(X)$. We have 
$E_G=\widehat{\h} (G)$, by the $1$-formality of $G$, 
and $\h(G)=\bL (X)$, by the vanishing of $\cup _{G}$.  
Hence, $E_G= \widehat {\bL}(X)$. 
On the other hand, \eqref{eq=malgen} implies 
$E_G= \widehat {\bL}(X)/\llangle{\bf w} \rrangle$. 
We thus obtain a filtered Lie algebra isomorphism,  
$\widehat {\bL}(X) \xrightarrow{\,\simeq\,} 
\widehat {\bL}(X)/\llangle{\bf w} \rrangle$.

Taking quotients relative to the respective Malcev filtrations 
and comparing vector space dimensions, we see that 
$\kappa_X (w_i) \in \bigcap_{k \geq 1}F_k\widehat {\bL}(X)=0$, 
for all $i$. A well-known result of Magnus (see \cite{MKS}) says 
that $\gr^*(\bF(X))$ is a torsion-free graded abelian group. 
We infer from \eqref{eq=malfree} that 
$w_i \in  \bigcap_{k \geq 1} \Gamma_k\bF(X)$, for all $i$. 
Another well-known result of Magnus  (see \cite{MKS})  insures 
that $\bF(X)$ is residually nilpotent, i.e., 
$\bigcap_{k \geq 1} \Gamma_k\bF(X)=1$. 
Hence, $w_i=1$,  for all $i$, and so $G= \bF(X)$.
\end{proof}

\begin{lemma}
\label{lem=vfox}
Let $G_1$ and  $G_2$ be finitely presented groups with 
non-zero first Betti number. Then 
$\V_1(G_1 * G_2)=\T_{G_1 * G_2}$.
\end{lemma}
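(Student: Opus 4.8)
The plan is to exploit the fact that a classifying space for the free product is the wedge $W := K(G_1,1)\vee K(G_2,1)=K(G_1*G_2,1)$, and to compute twisted first cohomology of $W$ by Mayer--Vietoris. Fix an arbitrary character $\rho\in\T_{G_1*G_2}$. Since the projections $G_1*G_2\surj G_i$ identify $\T_{G_1*G_2}$ with $\T_{G_1}\times\T_{G_2}$, the character $\rho$ amounts to a pair $(\rho_1,\rho_2)$ with $\rho_i\in\T_{G_i}$; let $\LL$ be the associated rank-one local system on $W$, and $\LL_1,\LL_2$ its restrictions to the two summands. The goal is to show $H^1(W,\LL)\ne 0$ for every such $\rho$, which is exactly the assertion $\V_1(G_1*G_2)=\T_{G_1*G_2}$.

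First I would cover $W$ by open neighborhoods $U\supset K(G_1,1)$ and $V\supset K(G_2,1)$ that deformation retract onto the respective summands and whose intersection $U\cap V$ deformation retracts onto the wedge point. As $\LL$ restricts to a trivial system on the contractible set $U\cap V$, the Mayer--Vietoris sequence reads
\[
0\to H^0(W,\LL)\to H^0(K(G_1,1),\LL_1)\oplus H^0(K(G_2,1),\LL_2)\xrightarrow{\ r\ } \C\to H^1(W,\LL)\to H^1(G_1,{}_{\rho_1}\C)\oplus H^1(G_2,{}_{\rho_2}\C)\to 0 .
\]
Next I would run the short case analysis on which of $\rho_1,\rho_2$ equals $1$. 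If both are nontrivial, then $H^0$ of each summand vanishes, $r=0$, and the connecting map $\C\hookrightarrow H^1(W,\LL)$ forces $\dim H^1(W,\LL)\ge 1$. If $\rho=1$, then $H^1(W,\C)=H^1(G_1,\C)\oplus H^1(G_2,\C)$ is nonzero because $b_1(G_i)>0$. If exactly one factor is untwisted, say $\rho_1=1\ne\rho_2$, then the restriction-to-basepoint map on $H^0$ of the first summand is an isomorphism $\C\isom\C$, so $r$ is onto, the connecting map vanishes, and $H^1(W,\LL)\cong H^1(G_1,\C)\oplus H^1(G_2,{}_{\rho_2}\C)$, which is again nonzero since $b_1(G_1)>0$. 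In every case $\rho\in\V_1(G_1*G_2)$, which is the claim.

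The step I would be most careful about is precisely this last boundary case: one must check that when one of the two characters is trivial, the corresponding $H^0$ with (trivial) coefficients surjects onto $H^0$ of the wedge point, so that the Mayer--Vietoris connecting homomorphism kills the $1$-dimensional overlap term and the $H^1$ direct sum survives intact. Once that is in hand the argument is purely formal; the hypothesis $b_1(G_i)\ne 0$ is used exactly (and only) to ensure that the untwisted $H^1$-summand contributed by a trivial factor is nonzero, and finite presentability of the $G_i$ enters only so that $\V_1$ is a genuine closed subvariety of the character torus. As an alternative, the same conclusion follows from a Fox-calculus computation applied to the concatenated presentation of $G_1*G_2$, but the wedge/Mayer--Vietoris route is shorter and more transparent.
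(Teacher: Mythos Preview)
Your argument is correct and complete. The Mayer--Vietoris computation on the wedge $K(G_1,1)\vee K(G_2,1)$ works exactly as you describe, and your case analysis handles all the boundary situations cleanly; in particular, your surjectivity check on $r$ when exactly one factor is untwisted is the right point to be careful about, and you have it right.

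The paper proceeds differently: it uses Fox calculus directly on the concatenated presentation of $G_1*G_2$. Writing $\rho=(\rho_1,\rho_2)$, the Fox Jacobian of the free product is block-diagonal, so $d_j(\rho)=d_j(\rho_1)\oplus d_j(\rho_2)$ for $j=1,2$, and a short rank count yields the explicit formula $b_1(G,\rho)=b_1(G_1,\rho_1)+b_1(G_2,\rho_2)+1$ when both $\rho_i\ne 1$, and $b_1(G,\rho)=b_1(G_1,\rho_1)+b_1(G_2,\rho_2)$ otherwise. This is precisely the dimension formula your Mayer--Vietoris sequence encodes (the extra $+1$ is your connecting-map contribution from the wedge point). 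Your topological route has the advantage of making the geometric origin of that extra dimension transparent and of avoiding any choice of presentation; the paper's Fox-calculus route has the advantage of producing the exact numerical formula for $b_1(G,\rho)$ in one line, which is occasionally useful elsewhere. The two arguments are equivalent in strength, and you correctly anticipated the Fox-calculus alternative in your closing remark.
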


\begin{proof}
Let $G= \langle x_1, \dots, x_s \mid  w_1, \dots, w_r \rangle$ be 
an arbitrary finitely presented group, and let $\rho\in \T_G$ be an
arbitrary character. By Fox calculus, we know that $\rho\in \V_1(G)$ 
if and only if $b_1(G, \rho)>0$, where
$b_1(G, \rho) := \dim \ker d_1(\rho)- \rank d_2(\rho)$. 
Moreover, the linear map $d_1(\rho)\colon \C^s \to \C$ 
sends the basis element corresponding to the generator 
$x_i$ to $\rho (x_i)-1$, while the linear map 
$d_2(\rho)\colon \C^r \to \C^s$ is given by the 
evaluation at $\rho$ of the matrix of free derivatives 
of the relators, 
$\big( \frac{\partial w_j}{\partial x_i} (\rho) \big)$; 
see Fox \cite{Fox}.

For $G=G_1 * G_2$, write $\rho= (\rho_1, \rho_2)$, 
with $\rho_i \in \T_{G_i}$. We then have 
$d_j(\rho)= d_j(\rho_1) +d_j(\rho_2)$,  for $j=1, 2$. Hence, 
$b_1(G, \rho)= b_1(G_1, \rho_1) +  b_1(G_2, \rho_2) +1$, if both
$\rho_1$ and $\rho_2$ are different from $1$, and otherwise
$b_1(G, \rho)= b_1(G_1, \rho_1) +  b_1(G_2, \rho_2)$. Since
$b_1(G_i, 1)= b_1(G_i)>0$, the claim follows. 
\end{proof}

\begin{theorem} 
\label{thm=nonalgw}
Let $G_1$ and  $G_2$ be finitely presented groups 
with non-zero first Betti number.

\begin{enumerate}
\item \label{w1}
If the coproduct $G_1 * G_2$ is quasi-K\"{a}hler, then 
$\cup_{G_1}=\cup_{G_2}=0$.

\item \label{w2}
Assume moreover that $G_1$ and  $G_2$ are $1$-formal, 
presented by commutator relators only. Then 
$G_1 * G_2$ is a quasi-K\"{a}hler group if and only
if both $G_1$ and  $G_2$ are free.
\end{enumerate}
\end{theorem}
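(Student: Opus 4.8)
The plan is to prove Theorem \ref{thm=nonalgw} in two steps, deriving Part \eqref{w2} as a consequence of Part \eqref{w1} together with the $1$-formality results already established. The main tool for \eqref{w1} is the position obstruction from Theorem \ref{thm=posobstr}\eqref{a1}, applied to the variety $\V_1(G_1 * G_2)$, combined with the explicit computation of $\R_1$ and $\V_1$ of a free product provided in \S\ref{subsec=cpr} and Lemma \ref{lem=vfox}.

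\smallskip

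For Part \eqref{w1}, suppose $G:=G_1 * G_2$ is quasi-K\"{a}hler, so $G=\pi_1(M)$ for some connected quasi-compact K\"{a}hler manifold $M$. Since $b_1(G_i)>0$ for $i=1,2$, Lemma \ref{lem=vfox} gives $\V_1(G)=\T_G$; in particular $\V_1(G)$ is irreducible, equals its own unique component through $1$, and its tangent space at $1$ is the whole of $H^1(G,\C)=H^1(G_1,\C)\oplus H^1(G_2,\C)$, which is positive-dimensional. By Theorem \ref{thm=posobstr}\eqref{a1}, this tangent space $\TT=H^1(G,\C)$ must be $p$-isotropic with respect to $\cup_G$ for some $p\in\{0,1\}$. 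Now recall from \S\ref{subsec=cpr} that $\cup_G=\cup_{G_1}*\cup_{G_2}$, so its restriction to $H^1(G_1,\C)\wedge H^1(G_2,\C)$ vanishes. If $p=1$, then $\cup_G$ would be a non-degenerate skew-symmetric form on $H^1(G,\C)$, which is impossible once both summands are nonzero (any element of $H^1(G_1,\C)$ pairs trivially with all of $H^1(G_2,\C)$, contradicting non-degeneracy) — this is exactly the argument in Lemma \ref{lem=reswedge}. Hence $p=0$, i.e.\ $\cup_G$ is identically zero on $\bigwedge^2 H^1(G,\C)$, and restricting to each summand yields $\cup_{G_1}=\cup_{G_2}=0$.

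\smallskip

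For Part \eqref{w2}, assume in addition that $G_1$ and $G_2$ are $1$-formal and presented by commutator relators only. If $G_1$ and $G_2$ are both free, then $G_1*G_2$ is free, hence quasi-projective (it is the fundamental group of a complement of points in $\C$, or a wedge of circles), a fortiori quasi-K\"{a}hler. Conversely, if $G_1*G_2$ is quasi-K\"{a}hler, then by Part \eqref{w1} we have $\cup_{G_1}=\cup_{G_2}=0$. Each $G_i$ is $1$-formal with $\cup_{G_i}=0$ and is a commutator-relators group, so Lemma \ref{lem=cup0} applies and gives that each $G_i$ is free. This completes the proof.

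\smallskip

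\textbf{Main obstacle.} The only genuinely delicate point is making sure the hypotheses of Theorem \ref{thm=posobstr}\eqref{a1} are met: that theorem speaks of the components of $\V_1(G)$ \emph{containing} $1$ and their tangent spaces, so I must first invoke Lemma \ref{lem=vfox} to know $\V_1(G)=\T_G$ is positive-dimensional and passes through $1$, and then translate ``$\TT^{\alpha}$ is $p$-isotropic'' into a statement about $\cup_G$ via the block structure $\cup_{G_1}*\cup_{G_2}$. Ruling out the $1$-isotropic case is the key sub-step, and it is precisely the non-degeneracy contradiction already isolated in Lemma \ref{lem=reswedge}; everything else is bookkeeping with the results of \S\ref{subsec=cp1}--\S\ref{subsec=cpr} and Lemmas \ref{lem=cup0} and \ref{lem=vfox}.
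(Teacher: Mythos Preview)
Your proof is correct and follows essentially the same route as the paper: invoke Lemma~\ref{lem=vfox} to identify the component of $\V_1(G_1*G_2)$ through $1$, apply the isotropicity obstruction from Theorem~\ref{thm=posobstr}\eqref{a1} to its tangent space $H^1(G,\C)$, and then use the argument of Lemma~\ref{lem=reswedge} on the block form $\cup_{G_1}*\cup_{G_2}$; Part~\eqref{w2} is then deduced from Part~\eqref{w1} via Lemma~\ref{lem=cup0} exactly as in the paper. One small imprecision: you write that $\V_1(G)=\T_G$ is irreducible, but $\T_G$ need not be connected when $G_{\ab}$ has torsion---what you actually need (and what the paper uses) is that the unique irreducible component through $1$ is $\T_G^0$, whose tangent space is still all of $H^1(G,\C)$; this does not affect the argument.
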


\begin{proof}
Part \eqref{w1}. Set $G= G_1 * G_2$. From Lemma \ref{lem=vfox}, 
we know that there is just one irreducible component of $\V_1(G)$ 
containing $1$, namely $\V =\T_G^{0}$, the component of the identity
in the character torus. Hence, $T_1(\V)= H^1(G, \C)$. Libgober's 
result from \cite{Li} implies then that $\R_1(G)=H^1(G, \C)$. 
If $G$ is quasi-K\"{a}hler, Theorem \ref{thm=posobstr}\eqref{a1} 
may be invoked to infer that $\cup_G$ satisfies the isotropicity 
resonance obstruction. The 
conclusion follows from Lemma \ref{lem=reswedge}.

Part \eqref{w2}. 
If $G_1$ and $G_2$ are free, then $G_1 * G_2$ is also free 
(of finite rank), thus quasi-projective. For the converse, 
use Part \eqref{w1} to deduce that $\cup _{G_1}=\cup _{G_2}=0$, 
and then apply Lemma \ref{lem=cup0}. 
\end{proof}

Let $\CC$ be the class of fundamental groups of complex 
projective curves of non-zero genus. Each $G \in \CC$ is a 
$1$-formal group, admitting a presentation with a single 
commutator relator, and is not free (for instance, since 
$\cup _G \ne 0$).  Proposition \ref{prop=products} and 
Theorem \ref{thm=nonalgw} yield the following corollary.  

\begin{corollary} 
\label{cor=cop1}
If $G_1,G_2 \in \CC$, then $G_1 *G_2$ is 
a $1$-formal group, yet $G_1 *G_2$ is not realizable as 
the fundamental group of a smooth, quasi-projective variety $M$. 
\end{corollary}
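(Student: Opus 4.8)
The plan is to combine the two main ingredients just established: Proposition \ref{prop=products}, which governs the $1$-formality of coproducts, and Theorem \ref{thm=nonalgw}, which governs their realizability by quasi-projective (indeed quasi-K\"ahler) varieties. The key observation is that the class $\CC$ of fundamental groups of smooth complex projective curves of positive genus consists of groups that are simultaneously very well-behaved from the formality standpoint and very badly-behaved from the point of view of the cup-product obstruction, so that the coproduct of two such groups is pushed just across the line separating $1$-formal groups from quasi-projective ones.

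First I would record the relevant properties of a group $G \in \CC$. Such a $G$ is $\pi_1(C)$ for $C$ a compact Riemann surface of genus $g \ge 1$; it admits the standard surface presentation with a single commutator relator $\prod_{i=1}^g (a_i, b_i)$, hence is finitely presented by commutator relators with $b_1(G) = 2g > 0$. It is $1$-formal: $C$ is a compact K\"ahler manifold, so $\pi_1(C)$ is $1$-formal by \cite{DGMS} (alternatively, a surface group is a one-relator group whose relator is a product of commutators, and its Malcev Lie algebra is the degree completion of a quadratic Lie algebra). Finally $G$ is not free, the cleanest witness being that $\cup_G \colon \bigwedge^2 H^1(C,\C) \to H^2(C,\C) = \C$ is the non-degenerate intersection form, in particular $\cup_G \ne 0$; equivalently, $H^2(G,\C) \ne 0$, which a free group cannot satisfy.

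Now take $G_1, G_2 \in \CC$. Since both are finitely presented and $1$-formal, Proposition \ref{prop=products} gives that $G_1 * G_2$ is again $1$-formal. On the other hand, both $G_i$ have non-zero first Betti number and are presented by commutator relators only, so Theorem \ref{thm=nonalgw}\eqref{w2} applies: $G_1 * G_2$ is quasi-K\"ahler (in particular, quasi-projective) if and only if both $G_1$ and $G_2$ are free. But neither $G_i$ is free, as noted above. Hence $G_1 * G_2$ is not quasi-projective, while being $1$-formal, which is exactly the assertion. (One could equally invoke Theorem \ref{thm=nonalgw}\eqref{w1}: if $G_1 * G_2$ were quasi-K\"ahler then $\cup_{G_1} = \cup_{G_2} = 0$, contradicting $\cup_{G_i} \ne 0$.)

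There is essentially no obstacle here beyond correctly verifying that the hypotheses of the two cited results are met by every $G \in \CC$ — the substantive work has already been done in Proposition \ref{prop=products} and, especially, in Theorem \ref{thm=nonalgw}, whose proof in turn rests on Lemmas \ref{lem=reswedge}, \ref{lem=cup0}, \ref{lem=vfox} and on the position obstruction Theorem \ref{thm=posobstr}\eqref{a1}. The only point that requires a sentence of care is the passage from ``not quasi-K\"ahler'' to ``not quasi-projective'': this is immediate since every smooth quasi-projective variety is in particular a quasi-compact K\"ahler manifold, so the quasi-K\"ahler obstruction a fortiori obstructs quasi-projectivity.
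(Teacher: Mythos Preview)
Your proof is correct and matches the paper's approach exactly: the paper likewise derives the corollary by recording that each $G\in\CC$ is $1$-formal, is presented by a single commutator relator, and is non-free since $\cup_G\ne 0$, and then invoking Proposition~\ref{prop=products} and Theorem~\ref{thm=nonalgw}. Your additional remarks (the alternative via part~\eqref{w1}, and the observation that quasi-projective implies quasi-K\"ahler) are sound elaborations of what the paper leaves implicit.
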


This shows that $1$-formality and quasi-projectivity may 
exhibit contrasting behavior with respect to the coproduct 
operation for groups.

\section{Configuration spaces} 
\label{sec=conf}

Denote by $S^{\times n}$ the $n$-fold cartesian product of a 
connected space $S$.  Consider the {\em configuration space}\/ 
of $n$ distinct labeled points in $S$,
\[
F(S, n)= S^{\times n}\setminus \bigcup_{i<j} \Delta_{ij} ,
\]
where $\Delta_{ij}$ is the diagonal $\{s\in S^{\times n} \mid  s_i=s_j\}$. 
The topology of configuration spaces has attracted considerable 
attention over the years.   For $S$ a smooth, complex projective 
variety, the cohomology algebra $H^*(F(S, n), \C)$ has been 
described by Totaro \cite{Tt}, solely in terms of $n$ and the 
cohomology algebra $H^*(S, \C)$. 

Let $C_g$ be a smooth compact complex curve of genus $g$ 
($g\ge 1$).  The fundamental group of the configuration space 
$M_{g,n}:= F(C_g, n)$ may be identified with $P_{g,n}$, the 
pure braid group on $n$ strings of the underlying Riemann surface.  
Starting from Totaro's description, it is straightforward to check 
that the low-degrees cup-product map of $P_{g,n}$ is equivalent, 
in the sense of Definition \ref{def=similar}, to the composite 
\begin{equation} 
\label{eq:cupg}
\mu_{g,n}\colon 
\xymatrix{ \bigwedge^2 H^1(C_g^{\times n},\C)
\ar[r]^(.55){\cup_{C_g^{\times n}}} & 
H^2(C_g^{\times n},\C)\ar@{>>}[r] &  
H^2(C_g^{\times n},\C)/ \spn \{[\Delta_{ij}]\}_{ i<j} } ,
\end{equation}
where $[\Delta_{ij}]\in H^2(C_g^{\times n}, \C)$ denotes the 
dual class of the diagonal $\Delta_{ij}$, and the second arrow 
is the canonical projection.  It follows that the connected  
smooth quasi-projective complex variety $M_{g,n}$ 
has the property that $W_1(H^1(M_{g,n},\C))=H^1(M_{g,n},\C)$, 
for all $g, n\ge 1$.

The Malcev Lie algebra of $P_{g,n}$ has been computed by 
Bezrukavnikov in \cite{B}, for all $g,n\ge 1$.  It turns out that 
the groups $P_{g,n}$ are $1$-formal, for $g>1$ and $n\ge 1$, 
or $g=1$ and $n\le 2$; see \cite[p.~130]{B}.   
On the other hand, Bezrukavnikov also states 
in \cite[Proposition 4.1(a)]{B} that $P_{1,n}$ is 
not $1$-formal for $n\ge 3$, without giving an argument.  
With our methods, this can be easily proved.

\begin{example} 
\label{ex:g1not1f}
Let  $\{ a, b\}$ be the standard basis of $H^1(C_1,\C)=\C^2$. 
Note that the cohomology algebra 
$H^*( C_1^{\times n}, \C)$ is isomorphic to 
$\bigwedge^* (a_1, b_1,\dots , a_n, b_n)$. 
Denote by $(x_1, y_1, \dots $, $x_n ,y_n)$ the 
coordinates of $z\in H^1(P_{1,n}, \C)$. Using \eqref{eq:cupg}, 
it is readily seen that
\renewcommand{\arraystretch}{1.1}
\begin{equation*} 
\label{eq:resg1}
\R_1(P_{1,n})=\left\{ (x,y) \in \C^n\times \C^n \left|
\begin{array}{l}
\sum_{i=1}^n x_i=\sum_{i=1}^n y_i=0 ,\\
x_i y_j-x_j y_i=0,  \text{ for $1\le i<j< n$}
\end{array}
\right\}. \right.
\end{equation*}
\renewcommand{\arraystretch}{1.0}

Suppose $n\ge 3$.  Then $\R_1(P_{1,n})$ 
is a rational normal scroll in $\C^{2(n-1)}$, 
see \cite{Har}, \cite{E}. In particular, $\R_1(P_{1,n})$ is an 
irreducible, non-linear variety.  From Theorem \ref{thm=bnew}\eqref{bn1}, 
we conclude that $P_{1,n}$ is indeed non-$1$-formal. 
This indicates that Theorem 1.3 from \cite{Hai} 
cannot hold in the stated generality.

This family of examples also shows that both the 
$\R_1$--version of Arapura's result on $\V_1$ from 
Theorem \ref{thm:vadm}\eqref{va1} and the isotropicity 
resonance obstruction may fail, for an arbitrary smooth 
quasi-projective variety $M$. 
\end{example}

For $n\le 2$, things are even simpler. 

\begin{example} 
\label{ex:01iso}
It follows from \eqref{eq:cupg} that $\mu_{1,2}$ equals the 
canonical projection
\begin{equation*} 
\label{eq:cup12}
\mu_{1,2} \colon \bigwedge\nolimits^2 (a_1,b_1, a_2,b_2) \surj 
\bigwedge\nolimits^2 (a_1,b_1, a_2,b_2)/ \C \cdot 
(a_1-a_2)(b_1-b_2) .
\end{equation*}
It follows that $\R_1(P_{1,2})$ is a $2$-dimensional, 
$0$-isotropic linear subspace of $H^1 (P_{1,2},\C)$.

Consider now the smooth variety $M'_g:= M_{1,2}\times C_g$, 
with $g\ge 2$. By Proposition \ref{prop=products}, this 
variety has $1$-formal fundamental group. It also has 
the property that $W_1(H^1 (M'_g,\C))=H^1 (M'_g,\C)$. 
We infer from Lemma \ref{lem=resprod} that
\[
\R_1(\pi_1(M'_g))=\R_1(P_{1,2})\times\{0\} \cup
\{0\}\times  H^1 (C_g,\C) ,
\]
where the component $\R_1(P_{1,2})$ is $0$-isotropic 
and the component $H^1 (C_g,\C)$ is $1$-isotropic. 
We thus see that both cases listed in Proposition 
\ref{prop=posobstr}\eqref{ra1} may actually occur.
\end{example}

\begin{remark}
\label{rem:lib}
Recall from Example \ref{ex:notinj} that the tangent cone formula 
may fail for quasi-projective groups, at least in the case 
when $1$ is an isolated point of the characteristic variety.
The following statement can be extracted from \cite[p.~161]{Li}:
``If $M$ is a quasi-projective variety and $1$ is not an isolated 
point of $\V_1(\pi_1(M))$, then $TC_1(\V_1(\pi_1(M)))= 
\R_1(\pi_1(M))$."   Taking $M$ to be one of the configuration 
spaces $M_{1,n}$, with  $n\ge 3$, shows that this statement 
does not hold, even when $M$ is smooth.

Indeed, since $P_{1,2}$ is $1$-formal, we obtain from 
Theorem \ref{thm=tcfintro} that $\V_1(P_{1,2})$ is 
$2$-dimensional at $1$. As is well-known,
the natural surjection, $P_{1,n}\surj P_{1,2}$, embeds
$\V_1(P_{1,2})$ into $\V_1(P_{1,n})$, for $n\ge 2$. Thus, 
$\V_1(P_{1,n})$ is positive-dimensional at $1$, for $n\ge 2$.
On the other hand, it follows from Example \ref{ex:g1not1f} that
$TC_1(\V_1(P_{1,n}))$ is strictly contained in $\R_1(P_{1,n})$, 
for $n\ge 3$.
\end{remark}

\section{Artin groups} 
\label{sec=artinalg}

In this section, we analyze the class of finitely generated Artin 
groups. Using the resonance obstructions from 
Theorem \ref{thm=posobstr}, we give a complete 
answer to Serre's question for right-angled Artin groups, 
and we give a Malcev Lie algebra version of the answer for 
arbitrary Artin groups. 

\subsection{Labeled graphs and Artin groups}
\label{subsec=artin}
Let $\Gamma=(\sV,\sE,\ell)$ be a labeled finite simplicial graph, 
with vertex set $\sV$, edge set $\sE\subset \binom{\sV}{2}$, 
and labeling function $\ell \colon \sE\to \N _{\geq 2}$.  
Finite simplicial graphs are identified in the sequel with labeled 
finite simplicial graphs with $\ell (e)=2$, for each $e \in\sE$.

\begin{definition} 
\label{def=artgps} 
The {\em Artin group}\/ associated to the labeled graph $\Gamma$ is 
the group $G_{\Gamma}$ generated by the vertices $v \in \sV$ and 
with a defining relation 
\[
\underbrace{vwv\cdots}_{\ell(e)} =\underbrace{wvw\cdots}_{\ell(e)}
\]
for each edge $e=\{v,w\}$ in $\sE$.  If $\Gamma$ is unlabeled, 
then $G_{\Gamma}$ is called a {\it right-angled Artin group}, and 
is defined by commutation relations $vw=wv$,
one for each edge $\{v,w\} \in\sE$.
\end{definition}

\begin{example}
\label{ex=artjoin}
Let $\Gamma=(\sV,\sE,\ell)$ and $\Gamma'=(\sV',\sE',\ell')$ be two 
labeled graphs.  Denote by $\Gamma \bigsqcup \Gamma '$ their 
disjoint union, and by $\Gamma *\Gamma '$ their {\em join}, 
with vertex set $\sV \bigsqcup \sV '$, edge set 
$ \sE \bigsqcup \sE ' \bigsqcup \{  \{ v,v'\} \mid  v \in \sV, v' \in \sV'\}$, 
and label $2$ on each edge $ \{ v,v'\}$.  Then
\[
G_{\Gamma \bigsqcup \Gamma '}  = G_{\Gamma} *G_{\Gamma '}
\quad \text{and} \quad 
G_{\Gamma *\Gamma '}  = G_{\Gamma} \times G_{\Gamma '}\, .
\]
In particular, if $\Gamma$ is a discrete graph, i.e., $\sE=\emptyset$, 
then $G_{\Gamma}=\FF_n$, whereas if $\Gamma$ is an (unlabeled) 
complete graph, i.e., $\sE=\binom{\sV}{2}$, then 
$G_{\Gamma}=\Z^n$, where $n=\abs{\sV}$.  More generally, 
if $\Gamma$ is a complete multipartite graph  (i.e., a finite 
join of discrete graphs), then $G_{\Gamma}$ is a finite 
direct product of finitely generated free groups. 
\end{example}

Given a graph $\Gamma=(\sV,\sE)$ and a subset of vertices $\sW \subset \sV$, 
we denote by $\Gamma(\sW)$ the full subgraph of $\Gamma$, 
with vertex set  $\sW $ and edge set $\sE \cap \binom{\sW}{2} $.

Let $(S^1)^{\sV}$ be the compact $n$-torus, where $n=|\sV|$, endowed with 
the standard cell structure.  Denote by $K_{\Gamma}$ the subcomplex of 
$(S^1)^{\sV}$ having a $k$-cell for each subset $\sW \subset \sV$ of size $k$ 
for which $\Gamma(\sW)$ is a complete graph. As shown by Charney--Davis
\cite{CD} and Meier--VanWyk \cite{MV},  $K_{\Gamma}=K(G_{\Gamma},1)$. 
In particular, the cup-product map 
$\cup_{G_{\Gamma}}\colon H^1(G_{\Gamma},\C)\wedge H^1(G_{\Gamma},\C) 
\to H^2(G_{\Gamma},\C)$ may be identified with the linear map 
$\cup_{\Gamma}\colon \C^\sV \wedge  \C^\sV \to \C^\sE$ 
defined by
\begin{equation} 
\label{eq=cupart}
v\cup_{\Gamma}w = 
\begin{cases} 
\pm  \{v,w\} , &\text{if  $\{v,w\} \in \sE$},\\
0,  &\text{otherwise} ,
\end{cases}  
\end{equation}
with signs determined by fixing an orientation of the edges of $\Gamma$.

\subsection{Jumping loci for right-angled Artin groups}
\label{subsec=jumpartin}

The resonance varieties of right-angled Artin groups  
were  described explicitly in Theorem 5.5 from \cite{PS1}. 
If $\Gamma=(\sV,\sE)$ is a graph, then 
\begin{equation} 
\label{eq=resart}
\R_1(G_{\Gamma})= \bigcup_{\sW} \C^ \sW ,
\end{equation}
where the union is taken over all subsets $\sW \subset \sV$ 
such that $\Gamma(\sW)$ is disconnected, and maximal with 
respect to this property.   Moreover, the decomposition \eqref{eq=resart} 
coincides with the decomposition into irreducible components of 
$\R_1(G_{\Gamma})$.

Before proceeding to the Serre problem, we describe the 
characteristic variety of $G_{\Gamma}$. 
For $\sW \subset \sV$, define the subtorus 
$\T _{\sW} \subset \T _{G_{\Gamma }   }=(\C^*)^{\sV}$ by
\[
\T _{\sW}=\{(t_v)_{v \in \sV} 
\in (\C^*)^{\sV} \mid  \text{$t_v=1$ for $v \notin \sW$} \} .
\]
The  map 
$\exp\colon T_1\T_{G_\Gamma} \to \T_{G_{\Gamma}}$ 
is the componentwise  exponential map 
$\exp^{\sV}\colon \C^\sV\to(\C^*)^{\sV}$; 
its restriction to the subspace spanned by $\sW$ 
is $\exp^{\sW}\colon \C^\sW\to(\C^*)^{\sW}=\T_{\sW}$.

\begin{prop} 
\label{prop=vart}
Let $G_{\Gamma }$ be the right-angled Artin group associated 
to the graph $\Gamma=(\sV,\sE)$. Then
\[
\V_1(G_{\Gamma })= \bigcup _{\sW}\T _{\sW} ,
\]
where the union is over all subsets $\sW\subset \sV$ 
such that $\Gamma(\sW)$ is maximally disconnected.   
Moreover, this decomposition coincides with the 
decomposition into irreducible components of 
$\V_1(G_{\Gamma})$.

\end{prop}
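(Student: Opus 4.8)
The plan is to compute $\V_1(G_\Gamma)$ directly via Fox calculus, reducing it to a rank computation for a matrix indexed by the vertices and edges of $\Gamma$, and then to translate the outcome into the combinatorial condition in the statement. By Lemma~\ref{lem=vfox} and Fox's formulas, a character $\rho=(t_v)_{v\in\sV}\in \T_{G_\Gamma}=(\C^*)^\sV$ lies in $\V_1(G_\Gamma)$ if and only if $b_1(G_\Gamma,\rho)=\dim\ker d_1(\rho)-\rank d_2(\rho)>0$, where $d_1(\rho)\colon\C^\sV\to\C$ sends $e_v\mapsto t_v-1$ (so $\dim\ker d_1(\rho)$ is $|\sV|$ if $\rho=1$ and $|\sV|-1$ otherwise), and $d_2(\rho)$ is the Jacobian matrix of the defining relators $[v,w]=vwv^{-1}w^{-1}$, $\{v,w\}\in\sE$, evaluated at $\rho$. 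A short free-derivative computation shows that the row of $d_2(\rho)$ attached to the edge $\{v,w\}$ is a unit multiple of $(t_w-1)\,e_v^\ast-(t_v-1)\,e_w^\ast$, with all other entries zero.

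Now fix $\rho\ne 1$, let $\sW=\{v:t_v\ne 1\}$ be its support, and set $s_v:=t_v-1$. The rows of $d_2(\rho)$ split into three families according to the position of the endpoints of the corresponding edge relative to $\sW$. Edges with no endpoint in $\sW$ give zero rows. An edge with exactly one endpoint $v\in\sW$, the other $w\notin\sW$, gives (up to a unit) the row $s_v\,e_w^\ast$; the span of all these rows is the coordinate subspace $\C^{\mathsf{U}}$, where $\mathsf{U}\subset\sV\setminus\sW$ is the set of vertices outside $\sW$ adjacent to $\sW$. An edge with both endpoints $v,w\in\sW$ gives the row $s_vs_w\,(f_v^\ast-f_w^\ast)$, where $f_v^\ast:=e_v^\ast/s_v$ is a basis of $\C^\sW$; after rescaling these rows by the nonzero scalars $s_vs_w$ they become (the transpose of) the signed incidence matrix of the induced subgraph $\Gamma(\sW)$, of rank $|\sW|-c(\Gamma(\sW))$, where $c(\cdot)$ is the number of connected components. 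Since the two nonzero families involve disjoint sets of coordinates, $\rank d_2(\rho)=|\sW|-c(\Gamma(\sW))+|\mathsf{U}|$, and therefore $b_1(G_\Gamma,\rho)=|\mathsf{Z}|+c(\Gamma(\sW))-1$, where $\mathsf{Z}=\sV\setminus(\sW\cup\mathsf{U})$ is the set of vertices outside $\sW$ having no neighbor in $\sW$.

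It remains to convert the inequality $|\mathsf{Z}|+c(\Gamma(\sW))\ge 2$ into the condition that $\rho$ lies in some $\T_{\sW'}$ with $\Gamma(\sW')$ disconnected. If $c(\Gamma(\sW))\ge 2$ then $\Gamma(\sW)$ is already disconnected and $\rho\in\T_\sW$; if $c(\Gamma(\sW))=1$ but $\mathsf{Z}\ne\emptyset$, choosing $z\in\mathsf{Z}$ and $\sW'=\sW\cup\{z\}$ makes $z$ isolated in $\Gamma(\sW')$, which is then disconnected, and $\rho\in\T_{\sW'}$; conversely, if $c(\Gamma(\sW))=1$ and $\mathsf{Z}=\emptyset$, then for every $\sW'\supseteq\sW$ each vertex of $\sW'\setminus\sW$ has a neighbor in $\sW$, so $\Gamma(\sW')$ is connected, and since a character lies in $\T_{\sW'}$ only when $\sW'$ contains its support, $\rho$ lies in no such torus. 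Treating $\rho=1$ separately (it is always in $\V_1(G_\Gamma)$, matching the convention used for $\R_1$ in~\eqref{eq=resart}), we obtain $\V_1(G_\Gamma)=\bigcup_{\sW:\,\Gamma(\sW)\text{ disconnected}}\T_\sW$. Enlarging a disconnected $\sW$ inside the (finite) family of subsets with disconnected induced subgraph only enlarges $\T_\sW$, so this union equals $\bigcup_{\sW\text{ maximally disc.}}\T_\sW$; and since $\T_\sW\subseteq\T_{\sW'}\iff\sW\subseteq\sW'$, there are no inclusions among the maximal terms, so they are precisely the irreducible components of $\V_1(G_\Gamma)$.

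I expect the rank computation for $d_2(\rho)$ — in particular recognizing the ``both endpoints in $\sW$'' block as an incidence matrix after the diagonal rescaling, and keeping the three edge-types and their coordinate supports correctly separated — to be the main technical point; everything afterwards is elementary graph bookkeeping, and the Fox calculus automatically rules out translated components. As a consistency check and an alternative route to the components through $1$, one may instead invoke the $1$-formality of $G_\Gamma$ (Kapovich--Millson \cite{KM}) together with Theorem~\ref{thm=bnew}\eqref{bn2}--\eqref{bn3} and the resonance description~\eqref{eq=resart}: this identifies the components of $\V_1(G_\Gamma)$ containing $1$ with $\exp^\sV(\C^\sW)=\T_\sW$ for $\Gamma(\sW)$ maximally disconnected, after which only the absence of other components — established by the computation above — remains.
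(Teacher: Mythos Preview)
Your argument is correct and complete. It takes a different route from the paper's proof, however. The paper does not compute the Fox Jacobian directly; instead it observes that the equivariant cochain complex $\Hom_{\Z G_\Gamma}(\widetilde{C}_\bullet(\widetilde{K_\Gamma}),{}_\rho\C)$ coincides, after the substitution $z=\sum_{v\in\sV}(t_v-1)\,v^*$, with the Aomoto complex $(H^\bullet(G_\Gamma,\C),\mu_z)$. This single identity shows $\rho\in\V_1(G_\Gamma)\iff z\in\R_1(G_\Gamma)$ and then quotes the resonance decomposition \eqref{eq=resart} from \cite{PS1}; the combinatorics is thus entirely offloaded to the resonance side. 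Your proof, by contrast, carries out the rank computation of $d_2(\rho)$ explicitly, recognizing the ``both endpoints in $\sW$'' block (after the diagonal rescaling $e_v^*\mapsto e_v^*/s_v$) as an incidence matrix, and then does the graph bookkeeping from scratch. What your approach buys is self-containment: you never need to invoke \eqref{eq=resart} or the $1$-formality of $G_\Gamma$, and you see transparently why no translated components arise. What the paper's approach buys is more: the identity \eqref{eq=reschar} holds in all cohomological degrees, so it identifies $\V^i_k(G_\Gamma)$ with the image of $\R^i_k(G_\Gamma)$ under the polynomial map $z\mapsto z+1$, for every $i$ and $k$, not just $i=k=1$. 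Your alternative route via Theorem~\ref{thm=bnew} is exactly right as a sanity check for the components through $1$, and it is pleasing that the full Fox-calculus computation confirms there are no other components.
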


\begin{proof}
The realization of $K(G_{\Gamma },1)$ as a subcomplex $K_{\Gamma }$ 
of the torus $(S^1)^{\sV}$ yields a well-known resolution of the trivial 
$\Z G_{\Gamma}$-module $\Z$ by finitely generated, free 
$\Z G_{\Gamma}$-modules, as the augmented, 
$G_{\Gamma}$-equivariant chain complex of the 
universal cover of $K_{\Gamma }$,
\[
{\widetilde C}_{\bullet} (\widetilde{K_{\Gamma}}) \colon 
\xymatrixcolsep{18pt}
\xymatrix{\cdots \ar[r] &  \Z G_{\Gamma } \otimes C_k  
\ar^(.45){d_k }[r]& \Z G_{\Gamma } \otimes C_{k-1}  
\ar[r] & \cdots \ar^(.45){d_1}[r] & \Z G_{\Gamma } 
\ar^{\epsilon}[r] & \Z \ar[r] & 0  }. 
\]
Here $C_k$ denotes the free abelian group generated by the 
$k$-cells of $K_{\Gamma }$, and the boundary maps are given by 
\begin{equation} 
\label{eq=kchains}
d_k(e_{v_1} \times \cdots \times e_{v_k})=
\sum _{i=1}^{k}(-1)^{i-1}(v_i-1) \otimes 
e_{v_1} \times \cdots \times {\widehat  e_{v_i} } 
\times \cdots \times e_{v_k} ,
\end{equation}
where, for each $v \in \sV$, the symbol $e_v$ denotes   
the $1$-cell  corresponding to the $v$-th factor of 
$(S^1)^{\sV}$.

Now let $\rho= (t_v)_{v \in \sV} \in (\C^*)^{\sV}$ be an arbitrary character. 
Denoting by $\{v^*\} _{v \in \sV}$ the basis of $H^1(G_{\Gamma },\C)$ 
dual to the canonical basis of $H_1(G_{\Gamma },\C)$, define 
an element $z \in \C^\sV= H^1(G_{\Gamma },\C)$ by
$z=\sum_{v \in \sV}(t_v-1)v^*$.  Using \eqref{eq=kchains}, 
it is not difficult to check the following equality of cochain 
complexes
\begin{equation} 
\label{eq=reschar}
\Hom_{\Z  G_{\Gamma }}( {\widetilde C}_{\bullet} 
(\widetilde{K_{\Gamma}}),{}_{\rho}\C) = 
(H^{\bullet} (G_{\Gamma },\C), \mu_ z ) .
\end{equation}
It follows then, directly from the definitions  
and using \eqref{eq=reschar}, that 
$\rho \in \V_1(G_{\Gamma })$ if and only if 
$z \in \R_1(G_{\Gamma })$.  Hence, the claimed 
decomposition of $\V_1(G_{\Gamma })$ is a direct 
consequence of the decomposition \eqref{eq=resart}.
\end{proof}

\subsection{Serre's problem for right-angled Artin groups}
\label{subsec=rightartin}

As shown by Kapovich and Millson in \cite[Theorem 16.10]{KM}, 
all Artin groups are $1$-formal.  This opens the way for approaching 
Serre's problem for Artin groups via resonance varieties, which, 
as noted above, were described explicitly in \cite{PS1}.  Using 
these tools, we find out precisely which right-angled Artin groups 
can be realized as fundamental groups of quasi-compact 
K\"{a}hler manifolds.

\begin{theorem} 
\label{thm=raserre} 
Let  $\Gamma=(\sV,\sE)$ be a finite simplicial graph, with associated 
right-angled Artin group $G_{\Gamma}$. The following are equivalent.

\begin{romenum}
\item  \label{s1} 
The group $G_{\Gamma}$ is quasi-K\"{a}hler.
\item  \label{s2} 
Every positive-dimensional irreducible component 
$\R^{\alpha}$ of $\R_1(\cup_{G_{\Gamma }})$
is a $p$-isotropic linear subspace of $H^1(G_{\Gamma}, \C)$, 
of dimension at least $2p+2$, for some 
$p=p(\alpha) \in \{0,1\}$. 
\item  \label{s3} 
The graph  $\Gamma$ is complete multipartite graph.
\item  \label{s4}  
The group $G_{\Gamma}$ is a finite product of finitely generated 
free groups.
\end{romenum}
\end{theorem}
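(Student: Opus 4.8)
The plan is to prove the cyclic chain of implications $\eqref{s1}\Rightarrow\eqref{s2}\Rightarrow\eqref{s3}\Rightarrow\eqref{s4}\Rightarrow\eqref{s1}$. For $\eqref{s1}\Rightarrow\eqref{s2}$, I would invoke Theorem \ref{thm=posobstr}: since every Artin group is $1$-formal by \cite{KM}, a quasi-K\"ahler $G_{\Gamma}$ satisfies its hypotheses, so part \eqref{a1} provides the isotropicity and dimension bounds for the positive-dimensional tangent spaces $\TT^{\alpha}$ at $1$ of the components of $\V_1(G_{\Gamma})$, while part \eqref{a0} identifies the collection $\{\TT^{\alpha}\}$ with the set of irreducible components of $\R_1(G_{\Gamma})=\R_1(\cup_{G_{\Gamma}})$ (via Remark \ref{rem=murelres}). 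This is exactly statement \eqref{s2}.

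The combinatorial heart is $\eqref{s2}\Rightarrow\eqref{s3}$. Here I would use the description \eqref{eq=resart} of $\R_1(G_{\Gamma})$: its irreducible components are the coordinate subspaces $\C^{\sW}$ indexed by those $\sW\subset\sV$ that are maximal with $\Gamma(\sW)$ disconnected. By \eqref{eq=cupart}, the restriction of $\cup_{\Gamma}$ to $\bigwedge^2\C^{\sW}$ has image spanned by the edges of $\Gamma(\sW)$, and as a skew form it is a signed sum $\sum \pm\, v^{*}\wedge w^{*}$ over those edges. First I would check that such a component can be $p$-isotropic (for $p\in\{0,1\}$) only when $\Gamma(\sW)$ is edgeless: it is $0$-isotropic precisely when $\Gamma(\sW)$ has no edge, and it can be $1$-isotropic only when $\Gamma(\sW)$ has a single edge and, by the non-degeneracy clause of Definition \ref{def=position}\eqref{it2}, $\dim\C^{\sW}=2$ — which would force $\Gamma(\sW)$ to consist of that single edge, contradicting disconnectedness. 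Thus \eqref{s2} forces every maximal $\sW$ with $\Gamma(\sW)$ disconnected to induce an edgeless subgraph. To finish, suppose $\Gamma$ is not complete multipartite; then some three vertices $u,v,w$ induce exactly one edge, say $\{u,w\}$, so the full subgraph on $\{u,v,w\}$ is disconnected, hence $\{u,v,w\}$ is contained in a maximal $\sW$ with $\Gamma(\sW)$ disconnected, and this $\sW$ carries the edge $\{u,w\}$ — a contradiction. Therefore $\Gamma$ is complete multipartite.

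For $\eqref{s3}\Rightarrow\eqref{s4}$ I would write a complete multipartite graph as a join $\Gamma=\Gamma_1*\cdots*\Gamma_r$ of discrete graphs on $n_1,\dots,n_r$ vertices and apply Example \ref{ex=artjoin} to obtain $G_{\Gamma}=G_{\Gamma_1}\times\cdots\times G_{\Gamma_r}=\FF_{n_1}\times\cdots\times\FF_{n_r}$, a finite product of finitely generated free groups. For $\eqref{s4}\Rightarrow\eqref{s1}$, each $\FF_{n_i}$ is the fundamental group of a complement of finitely many points in $\C$, a smooth quasi-projective curve, so $G_{\Gamma}$ is the fundamental group of a product of such curves, hence smooth quasi-projective, in particular quasi-K\"ahler. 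The step I expect to be the main obstacle is $\eqref{s2}\Rightarrow\eqref{s3}$: the other implications are, respectively, a direct citation of Theorem \ref{thm=posobstr}, an application of Example \ref{ex=artjoin}, and an elementary realization argument, whereas here one must carefully combine the isotropicity hypothesis with the fact that the components in \eqref{eq=resart} come from the \emph{maximal} disconnected induced subgraphs, and translate this into the graph-theoretic conclusion that $\Gamma$ is complete multipartite.
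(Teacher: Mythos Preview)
Your cyclic scheme and the arguments for \eqref{s1}$\Rightarrow$\eqref{s2}, \eqref{s3}$\Rightarrow$\eqref{s4}, and \eqref{s4}$\Rightarrow$\eqref{s1} match the paper's proof essentially verbatim.

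For \eqref{s2}$\Rightarrow$\eqref{s3} your route is genuinely different. The paper argues by induction on $\abs{\sV}$: it picks a maximal disconnected $\sW$, writes $G_{\Gamma(\sW)}=G_{\Gamma(\sW')}*G_{\Gamma(\sW'')}$, invokes Lemma~\ref{lem=reswedge} to force $\cup_{\Gamma(\sW')}=\cup_{\Gamma(\sW'')}=0$ (hence $\Gamma(\sW)$ is discrete), then uses maximality to split $\Gamma=\Gamma(\sW)*\Gamma(\sW_1)$ and Lemma~\ref{lem=resprod} to pass the isotropicity hypothesis down to $\Gamma(\sW_1)$. You instead work directly with the coordinate component $\C^{\sW}$: since the restricted cup-product has image $\C^{\sE(\Gamma(\sW))}$, the $0$-isotropic case forces $\Gamma(\sW)$ edgeless, while the $1$-isotropic case forces a single edge and (by non-degeneracy) $\abs{\sW}=2$, contradicting disconnectedness. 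You then finish with the forbidden-subgraph characterization of complete multipartite graphs (no induced $K_2\sqcup K_1$). Your argument is correct and more elementary: it bypasses Lemmas~\ref{lem=reswedge} and~\ref{lem=resprod} and the induction entirely. The paper's approach, on the other hand, illustrates how those two general lemmas on $\R_1$ of coproducts and products feed into the problem, which is thematically consistent with Section~\ref{sec=wp}.
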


\begin{proof}
For the implication \eqref{s1} $\Rightarrow$ \eqref{s2}, use 
the $1$-formality of $G_{\Gamma}$ and 
Theorem \ref{thm=posobstr}.

\smallskip
The implication  \eqref{s2} $\Rightarrow$ \eqref{s3} 
is proved by induction on $n=\abs{\sV}$. If  $\Gamma$ is complete, 
then  $\Gamma$ is the join of $n$ graphs with one vertex. Otherwise, 
there is a subset $\sW \subset \sV$ such that $\Gamma (\sW)$ is 
disconnected, and maximal with respect to this property.  Write 
$\sW = \sW' \bigsqcup   \sW''$, with both $ \sW'$ and $ \sW''$ 
non-empty and with no edge connecting $ \sW'$ to $ \sW''$. 
Then $\Gamma (\sW)=\Gamma (\sW ')  \bigsqcup \Gamma (\sW '')$, 
and so $G_{\Gamma (\sW)}=G_{\Gamma (\sW ')} * 
G_{\Gamma (\sW '')} $.  Hence, $w' \cup_{\Gamma (\sW)} w''=0$, 
for any $w' \in \sW '$ and $w'' \in \sW''$.  We infer from  
\cite[Lemma 5.2]{PS1} that $\R_1(G_{\Gamma (\sW)})=\C^\sW$. 

On the other hand, we know from \eqref{eq=resart} that $\C^\sW$
is a positive-dimensional irreducible component of 
$\R_1(G_{\Gamma })$. Our hypothesis implies that 
$\C^\sW$ is either $0$-isotropic  or $1$-isotropic with 
respect to $\cup_{\Gamma (\sW)}$.  By Lemma \ref{lem=reswedge}, 
$ \cup _{\Gamma (\sW ')}=\cup _{\Gamma (\sW '')}  =0$.  
The cup-product formula \eqref{eq=cupart} implies that 
$\Gamma (\sW)$ is a discrete graph.

If $\sW=\sV$, we are done. Otherwise, $\sV=\sW \bigsqcup 
\sW_1$, with $\abs{\sW_1} <n$. Since  $\Gamma (\sW)$ is maximally 
disconnected, this forces $\{ w, w_1\} \in \sE$, for all $w \in \sW$ 
and $w_1 \in \sW_1$. In other words, $\Gamma $ is the join 
$\Gamma (\sW) * \Gamma (\sW_1)$; thus, 
$G_{\Gamma }=G_{\Gamma (\sW)} \times G_{\Gamma (\sW_1)}$. 
By Lemma \ref{lem=resprod}, $\cup_{\Gamma (\sW_1)}$ inherits 
from $\cup_{\Gamma}$ the isotropicity property.  
This completes the induction.

\smallskip
The implication  \eqref{s3} $\Rightarrow$ \eqref{s4} follows 
from the discussion in Example \ref{ex=artjoin}.

\smallskip
Finally, the implication  \eqref{s4} $\Rightarrow$ \eqref{s1} 
follows by taking products and realizing free groups by the 
complex line with a number of points deleted.
\end{proof}

As is well-known, two right-angled Artin groups are isomorphic 
if and only if the corresponding graphs are isomorphic. 
Evidently, there are infinitely many graphs which are not 
joins of discrete graphs.  Thus, implication \eqref{s1}  
$\Rightarrow$ \eqref{s3} from Theorem \ref{thm=raserre} 
allows us to recover, in sharper form, a result of Kapovich 
and Millson (Theorem 14.7 from \cite{KM}).

\begin{corollary}
\label{cor=rkm}
Among right-angled Artin groups $G_{\Gamma}$, there are infinitely 
many mutually non-isomorphic groups which are not isomorphic 
to fundamental groups of smooth, quasi-projective complex varieties. 
\end{corollary}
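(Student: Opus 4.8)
The plan is to read the corollary straight off the equivalence \eqref{s1} $\Leftrightarrow$ \eqref{s3} of Theorem \ref{thm=raserre}, by producing infinitely many defining graphs that are not complete multipartite and whose associated right-angled Artin groups are pairwise distinguished by an elementary invariant. The first step is to record the standard fact that a smooth, connected, quasi-projective complex variety $M$ is in particular a quasi-compact K\"{a}hler manifold: by resolution of singularities one finds a smooth projective compactification $\oM \supseteq M$ with $\oM \setminus M$ a normal crossings divisor, and $\oM$ is K\"{a}hler. Hence every quasi-projective group is quasi-K\"{a}hler, and it suffices to exhibit infinitely many pairwise non-isomorphic right-angled Artin groups that fail to be quasi-K\"{a}hler.

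Next, by Theorem \ref{thm=raserre}, if $G_\Gamma$ is quasi-K\"{a}hler then $\Gamma$ is complete multipartite; equivalently, if $\Gamma$ is not complete multipartite then $G_\Gamma$ is not quasi-K\"{a}hler, and a fortiori not realizable as the fundamental group of any smooth quasi-projective variety. So I would fix, for each integer $n \ge 4$, the path graph $\Gamma_n$ on $n$ vertices. Since every complete multipartite graph on at least two vertices has diameter at most $2$, while $\Gamma_n$ is connected of diameter $n-1 \ge 3$, the graph $\Gamma_n$ is not complete multipartite; hence $G_{\Gamma_n}$ is not quasi-K\"{a}hler. Finally, to see that the groups $\{ G_{\Gamma_n} \}_{n \ge 4}$ are pairwise non-isomorphic, I would simply compute $H_1(G_\Gamma, \Z) \cong \Z^{\abs{\sV}}$, so that $b_1(G_{\Gamma_n}) = n$, and these Betti numbers are distinct. (One could instead quote the rigidity theorem that $G_\Gamma \cong G_{\Gamma'}$ forces $\Gamma \cong \Gamma'$, but this is not needed.) This proves the corollary.

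I do not anticipate a genuine obstacle: all the substance resides in Theorem \ref{thm=raserre}. The only points that call for a word of care are the passage from quasi-projective to quasi-K\"{a}hler, which rests on Hironaka's theorem, and the verification that the chosen family $\{\Gamma_n\}$ really lies outside the class of complete multipartite graphs, for which the diameter bound is the quickest certificate. One could equally well use any other infinite family of non-complete-multipartite graphs with pairwise distinct vertex counts, such as the cycles $C_n$ with $n \ge 5$; I would note this flexibility in passing but carry out the argument with the paths.
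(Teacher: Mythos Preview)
Your proposal is correct and follows essentially the same route as the paper: both deduce the corollary directly from the implication \eqref{s1} $\Rightarrow$ \eqref{s3} of Theorem \ref{thm=raserre}, by noting that there are infinitely many graphs that are not complete multipartite. The only difference is cosmetic: the paper invokes the rigidity theorem (two right-angled Artin groups are isomorphic iff their defining graphs are) to get pairwise non-isomorphic examples, whereas you give an explicit family (paths $\Gamma_n$, $n\ge 4$) and distinguish the groups by the more elementary invariant $b_1(G_{\Gamma_n})=n$, which is a nice touch.
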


\subsection{A Malcev Lie algebra version of Serre's question}
\label{subsec=mlserre}

Next, we describe a construction that associates to a labeled 
graph $\Gamma=(\sV,\sE,\ell)$ an ordinary graph, 
${\tilde \Gamma}=({\tilde  \sV}, {\tilde \sE})$, which 
we call the {\em odd contraction}\/ of $\Gamma$.
First define $\Gamma_{\odd}$ to be the unlabeled 
graph with vertex set $\sV$ and edge set 
$\{e \in \sE \mid  \ell (e)~  \text{is odd}\}$.
Then define ${\tilde  \sV}$ to be the set of connected 
components of $\Gamma_{\odd}$, with two distinct 
components determining an edge $\{c,c'\} \in  {\tilde \sE}$ 
if and only if there exist vertices $v\in c$ 
and $v'\in c'$ which are connected by an edge in $\sE$.

\begin{example} 
\label{ex=braids} 
Let $\Gamma$ be the complete graph on vertices 
$\{1,2,\dots ,n-1\}$, with labels $\ell(\{i,j\})=2$ if $|i-j|>1$ and  
$\ell(\{i,j\})=3$ if $|i-j|=1$. The corresponding Artin group 
is the classical  braid group on $n$ strings, $B_n$. 
Since in this case  $\Gamma_{\odd}$ is connected, the odd 
contraction ${\tilde \Gamma}$ is the discrete graph with 
a single vertex.
\end{example}

\begin{lemma}
\label{lem=malcont}
Let $\Gamma=(\sV,\sE,\ell)$ be  a labeled graph, with odd 
contraction ${\tilde \Gamma}=({\tilde  \sV} , {\tilde \sE})$. 
Then the Malcev Lie algebra of $G_{\Gamma}$ is filtered Lie 
isomorphic to the Malcev Lie algebra of  $G_{\tilde \Gamma}$.
\end{lemma}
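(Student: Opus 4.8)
The plan is to reduce the statement to a comparison of holonomy Lie algebras, and then to a short computation with the defining relators of $G_\Gamma$. Since $\tilde\Gamma$ is unlabeled, $G_{\tilde\Gamma}$ is a right-angled Artin group, so both $G_\Gamma$ and $G_{\tilde\Gamma}$ are $1$-formal by \cite[Theorem 16.10]{KM}; hence $E_{G_\Gamma}\cong \widehat{\h}(G_\Gamma)$ and $E_{G_{\tilde\Gamma}}\cong \widehat{\h}(G_{\tilde\Gamma})$ as filtered Lie algebras, where $\widehat{(\cdot)}$ denotes degree completion. Since a graded isomorphism $\h(G_\Gamma)\cong \h(G_{\tilde\Gamma})$ induces a filtered isomorphism of the completions, it suffices to produce such a graded isomorphism. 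On the right-hand side, recall from \cite{PS1} that for the unlabeled graph $\tilde\Gamma=(\tilde\sV,\tilde\sE)$ the holonomy Lie algebra is the graph Lie algebra $\bL(\K^{\tilde\sV})/\langle [c,c'] : \{c,c'\}\in \tilde\sE\rangle$.

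To compute $\h(G_\Gamma)$, I would use the description of the holonomy Lie algebra of a finitely presented group in terms of a presentation: taking the presentation $2$-complex (whose classifying map is an isomorphism on $H_1$ and an epimorphism on $H_2$, so \eqref{eq=holeq} applies, cf.\ also \cite{PS2}), one has $\h(G)=\bL(H_1(G,\K))/\langle \im \partial\rangle$, where $\partial$ is defined on the ($\Z$-module of) abelianized syzygies $\{(\lambda_j): \sum_j \lambda_j\overline{r_j}=0\}$ by sending $(\lambda_j)$ to the class of $\prod_j r_j^{\lambda_j}\in\Gamma_2\bF(X)$ in $\bL^2(H_1(\bF(X),\K))$, pushed forward along $\bL^2(H_1(\bF(X),\K))\to \bL^2(H_1(G,\K))$. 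Apply this with $X=\sV$ and the Artin relators $r_e$, $e\in\sE$. Abelianizing an Artin relator of even label is trivial, while one of odd label $e=\{v,w\}$ gives $v=w$; hence $H_1(G_\Gamma,\K)=\K^\sV/\langle v-w : \ell(e)\ \text{odd}\rangle=\K^{\tilde\sV}$, the image of a basis vector $v$ being the component $[v]\in\tilde\sV$ of $v$ in $\Gamma_{\odd}$. The syzygy module splits as the span of the even edges together with the $1$-cycles $Z_1(\Gamma_{\odd})$. For an even edge $e=\{v,w\}$ with label $2m$, using $wv=v^{-1}(vw)v$ one checks $r_e=[(vw)^m,v^{-1}]$, so $\partial(e)=m[v,w]\in\bL^2$, i.e.\ $m[[v],[w]]$ in $\bL^2(\K^{\tilde\sV})$; since $\ch\K=0$ and $m\ge1$, these span the same graded ideal as $\{[[v],[w]] : \{v,w\}\in\sE\}$.

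The heart of the argument — and the step I expect to be the only real obstacle — is that $\partial$ vanishes on $Z_1(\Gamma_{\odd})$. I would prove this using the retraction $\psi\colon \bF(\sV)\to\bF(\tilde\sV)$ sending each $v$ to $[v]$: for an odd edge $e=\{v,w\}$ with label $2m+1$ we have $r_e=v(wv)^m(vw)^{-m}w^{-1}$, which $\psi$ carries to $[v]([w][v])^m([v][w])^{-m}[w]^{-1}$ with $[v]=[w]$, hence to the empty word. Therefore $\psi$ sends the product $\prod_e r_e^{\lambda_e}$ along any $1$-cycle of $\Gamma_{\odd}$ to $1$, and by functoriality of $\gr^2$ its class in $\bL^2(\K^{\tilde\sV})$ — which is $\partial$ of that cycle, since $\gr^1(\psi)\colon\K^\sV\to\K^{\tilde\sV}$ coincides with the projection to $H_1(G_\Gamma,\K)$ — vanishes. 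Combining this with the previous paragraph gives $\h(G_\Gamma)=\bL(\K^{\tilde\sV})/\langle [[v],[w]] : \{v,w\}\in\sE\rangle$.

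Finally, I would match this with $\h(G_{\tilde\Gamma})$. An odd edge joins two vertices lying in the same component of $\Gamma_{\odd}$, so every edge $\{v,w\}\in\sE$ with $[v]\neq[w]$ is even-labeled and determines the edge $\{[v],[w]\}\in\tilde\sE$; conversely each edge of $\tilde\Gamma$ arises this way. Discarding the trivial relations coming from edges with $[v]=[w]$, the presentation obtained above for $\h(G_\Gamma)$ is exactly $\bL(\K^{\tilde\sV})/\langle [c,c'] : \{c,c'\}\in\tilde\sE\rangle=\h(G_{\tilde\Gamma})$, and completing yields the desired filtered isomorphism $E_{G_\Gamma}\cong E_{G_{\tilde\Gamma}}$.
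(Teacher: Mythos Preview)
Your argument is correct, but it takes a noticeably longer route than the paper's. The paper invokes \cite[Theorem 16.10]{KM} not just for $1$-formality, but for the explicit presentation it gives: $E_{G_\Gamma}$ is the quotient of $\widehat{\bL}(\sV)$ by the closed ideal generated by the \emph{linear} relations $u-v$ for odd-labeled edges $\{u,v\}$ and the bracket relations $[u,v]$ for even-labeled edges. The linear relations immediately collapse the generating set from $\sV$ to $\tilde\sV$, after which the surviving bracket relations are exactly $[c,c']$ for $\{c,c'\}\in\tilde\sE$; this is the Malcev Lie algebra of the right-angled Artin group $G_{\tilde\Gamma}$, and the proof is over in two sentences.

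You instead extract only the $1$-formality consequence of \cite[Theorem 16.10]{KM} and recompute $\h(G_\Gamma)$ from scratch via the presentation $2$-complex and the Hopf-type description of $\partial_K$. This is a genuine alternative and your computation is sound: the syzygy module does split as (even edges) $\oplus\, Z_1(\Gamma_{\odd})$, your commutator identity $r_e=[(vw)^m,v^{-1}]$ for even label $2m$ gives $\partial(e)=m[[v],[w]]$, and the retraction $\psi\colon\bF(\sV)\to\bF(\tilde\sV)$ cleanly kills the odd-cycle contribution since $\gr^1(\psi)$ coincides with the projection $\K^{\sV}\twoheadrightarrow H_1(G_\Gamma,\K)=\K^{\tilde\sV}$. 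What your approach buys is an explicit, presentation-level verification of the holonomy Lie algebra and of the fact that odd cycles contribute nothing; what it costs is that you are essentially re-deriving, at the graded level, the content already packaged in the Kapovich--Millson presentation you cite. Since you are quoting \cite[Theorem 16.10]{KM} anyway, using its full strength (the mixed degree-one/degree-two presentation of $E_{G_\Gamma}$) gives the result in one stroke.
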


\begin{proof}
The Malcev Lie algebra of $G_{\Gamma }$ was computed in  
\cite[Theorem 16.10]{KM}. It is the quotient of the free 
Malcev Lie algebra on $\sV$, ${ \widehat \bL}(\sV)$, by 
the closed Lie ideal generated by the differences $u-v$, 
for odd-labeled edges $\{u,v\}\in \sE$, and by the 
brackets $[u,v]$, for even-labeled edges $\{u,v\}\in \sE$. 
Plainly, this quotient is filtered Lie isomorphic to the 
quotient of  ${ \widehat \bL}({\tilde \sV})$ by the closed 
Lie ideal generated by the brackets $[c,c']$, for 
$\{c,c'\} \in {\tilde \sE}$, which is just the Malcev Lie 
algebra of $G_{ {\tilde \Gamma}    }$. 
\end{proof}

The Coxeter group associated to a labeled graph 
$\Gamma=(\sV,\sE,\ell)$ is the quotient of the Artin group 
$G_{\Gamma}$ by the normal subgroup generated by 
$\{v^2 \mid v\in \sV\}$.  If the Coxeter group $W_{\Gamma}$ 
is finite, then $G_{\Gamma}$ is quasi-projective. 
The proof of this assertion, due to Brieskorn \cite{Br}, 
involves the  following steps:  $W_\Gamma$ acts as a 
group of reflections in some $\C^{n}$; the action is free 
on the complement $M_{\Gamma}$ of the arrangement 
of reflecting hyperplanes of $W_{\Gamma}$, and 
$G_\Gamma=\pi_1(M_{\Gamma}/W_\Gamma)$;   
finally, the quotient manifold $M_{\Gamma}/W_\Gamma$ 
is a complex smooth quasi-projective variety.

It would be interesting to know exactly which (non-right-angled) 
Artin groups can be realized by smooth, quasi-projective complex 
varieties.  We give an answer to this question, albeit only at the 
level of Malcev Lie algebras of the respective groups.

\begin{corollary} 
\label{cor=aserre}
Let $\Gamma$ be  a labeled graph, with associated 
Artin group $G_{\Gamma }$ and odd contraction the 
unlabeled graph ${\tilde \Gamma}$. The following are equivalent.

\begin{romenum}
\item \label{arba1}
The Malcev Lie algebra of $G_{\Gamma}$ is filtered Lie 
isomorphic to the Malcev Lie algebra of a quasi-K\"{a}hler group.

\item \label{arba2}
The isotropicity property from Theorem \ref{thm=raserre}\eqref{s2}
is satisfied by $\cup_{G_{\Gamma}}$.

\item  \label{arba3} 
The graph ${\tilde \Gamma}$ is a complete multipartite graph.

\item  \label{arba4}  
The Malcev Lie algebra of $G_{\Gamma}$ is filtered Lie 
isomorphic to the Malcev Lie algebra of a finite product of finitely 
generated free groups.
\end{romenum}
\end{corollary}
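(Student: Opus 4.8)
The plan is to prove Corollary \ref{cor=aserre} by combining Lemma \ref{lem=malcont}, which reduces every statement about the Malcev Lie algebra of $G_{\Gamma}$ to the corresponding statement about the right-angled Artin group $G_{\tilde\Gamma}$, with Theorem \ref{thm=raserre}, which settles the right-angled case. The cycle of implications to establish is \eqref{arba1} $\Rightarrow$ \eqref{arba2} $\Rightarrow$ \eqref{arba3} $\Rightarrow$ \eqref{arba4} $\Rightarrow$ \eqref{arba1}. The only genuinely new point is that the isotropicity obstruction in \eqref{arba2}, which is phrased in terms of $\cup_{G_{\Gamma}}$, should really be detected on $G_{\tilde\Gamma}$; so the first task is to identify the relevant resonance data of $G_{\Gamma}$ with that of $G_{\tilde\Gamma}$.

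First I would record the key comparison of cup products. Since $G_{\Gamma}$ is $1$-formal by \cite[Theorem 16.10]{KM}, and filtered-isomorphic Malcev Lie algebras have isomorphic associated graded Lie algebras, Lemma \ref{lem=malcont} gives $\gr^*(E_{G_{\Gamma}})\cong\gr^*(E_{G_{\tilde\Gamma}})$; in degrees $1$ and $2$ this is dual to an equivalence (in the sense of Definition \ref{def=similar}) between $\cup_{G_{\Gamma}}$ and $\cup_{G_{\tilde\Gamma}}=\cup_{\tilde\Gamma}$, because for a $1$-formal group the degree-$\le 2$ part of $\h(G)$, hence the corestriction of the cup product to its image, is determined by $\gr^{\le 2}$. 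By the remark following Definition \ref{def=similar}, this equivalence matches resonance varieties $\R_k$ and matches $p$-isotropic subspaces. Consequently condition \eqref{arba2} for $\cup_{G_{\Gamma}}$ is equivalent to the isotropicity condition \ref{thm=raserre}\eqref{s2} for $\cup_{G_{\tilde\Gamma}}$. With this translation in hand, the implications become almost formal: \eqref{arba1} $\Rightarrow$ \eqref{arba2} follows because a quasi-K\"ahler group that is $1$-formal (as $G_{\Gamma}$ is) satisfies Theorem \ref{thm=posobstr}\eqref{a1}, and the isotropicity obstruction depends only on $\cup_{G_{\Gamma}}$, hence only on the Malcev Lie algebra up to filtered isomorphism; \eqref{arba2} $\Rightarrow$ \eqref{arba3} is exactly the right-angled implication \ref{thm=raserre}\eqref{s2} $\Rightarrow$ \eqref{s3} applied to $\tilde\Gamma$; \eqref{arba3} $\Rightarrow$ \eqref{arba4} is \ref{thm=raserre}\eqref{s3} $\Rightarrow$ \eqref{s4} for $\tilde\Gamma$, combined with Lemma \ref{lem=malcont} to transfer the conclusion back to $G_{\Gamma}$; and \eqref{arba4} $\Rightarrow$ \eqref{arba1} is immediate, since a finite product of finitely generated free groups is quasi-projective (Theorem \ref{thm=raserre}\eqref{s4} $\Rightarrow$ \eqref{s1}) and, a fortiori, quasi-K\"ahler, so its Malcev Lie algebra witnesses \eqref{arba1}.

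I expect the main obstacle to be making the translation in the previous paragraph fully rigorous: specifically, arguing that the isotropicity obstruction of Theorem \ref{thm=posobstr}\eqref{a1} is an invariant of the filtered isomorphism type of the Malcev Lie algebra, not just of the group. The cleanest route is to observe that for a $1$-formal group $G$, the pair $(H^1(G,\C),\cup_G\text{ corestricted to its image})$ is recovered, up to the equivalence of Definition \ref{def=similar}, from $\gr^{\le 2}_F(E_G)$ via duality (the holonomy Lie algebra is quadratic and generated in degree $1$, so its degree-$2$ part is the cokernel of $\partial_G$, dual to $\cup_G$). Then a filtered Lie isomorphism $E_{G_{\Gamma}}\cong E_H$ with $H$ quasi-K\"ahler forces $\cup_{G_{\Gamma}}\simeq\cup_H$, and since $H$ is quasi-K\"ahler we may apply Theorem \ref{thm=posobstr}\eqref{a1} to $\cup_H$; isotropicity is preserved under $\simeq$, giving \eqref{arba2}. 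Once this invariance is in place, every remaining step is a direct citation of Theorem \ref{thm=raserre} or Lemma \ref{lem=malcont}, and the proof closes the four-way equivalence. A small bookkeeping point to watch: in \eqref{arba3} $\Rightarrow$ \eqref{arba4} one gets that $G_{\tilde\Gamma}$ is a product of free groups, and Lemma \ref{lem=malcont} then says $E_{G_{\Gamma}}\cong E_{G_{\tilde\Gamma}}$, which is precisely the assertion of \eqref{arba4}; no further argument about $G_{\Gamma}$ itself is needed.
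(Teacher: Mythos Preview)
Your approach is essentially the paper's: use Lemma~\ref{lem=malcont} to get $\cup_{G_{\Gamma}}\simeq\cup_{G_{\tilde\Gamma}}$ (and, for \eqref{arba1}$\Rightarrow$\eqref{arba2}, $\cup_{G_{\Gamma}}\simeq\cup_H$), then invoke Theorems~\ref{thm=raserre} and~\ref{thm=posobstr}. Two small points to tighten: the recovery of the corestricted $\cup_G$ from $\gr^{\le 2}_F(E_G)$ needs no $1$-formality hypothesis (it is Sullivan's identification of $\im\partial_G$ with the kernel of the bracket $\bigwedge^2\gr^1\to\gr^2$, valid for any finitely presented $G$, as the paper's proof cites), and in \eqref{arba1}$\Rightarrow$\eqref{arba2} you should state explicitly that the quasi-K\"ahler witness $H$ is itself $1$-formal (since $E_H\cong E_{G_{\Gamma}}$ is the completion of a quadratic Lie algebra), so that Theorem~\ref{thm=posobstr}\eqref{a0} is available to pass from the $\TT^{\alpha}$ of part~\eqref{a1} to the components of $\R_1(H)$.
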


\begin{proof}
By Lemma \ref{lem=malcont}, the Malcev Lie algebras of 
$G_{\Gamma}$ and $G_{ {\tilde \Gamma} }$ are filtered isomorphic. 
Hence, the graded Lie algebras $\gr^*(G_{\Gamma }) \otimes \C$ 
and  $\gr^*(G_{ {\tilde \Gamma} }) \otimes \C$ are isomorphic.

From \cite{S}, we know that the kernel of 
the Lie bracket, $\bigwedge ^2 \gr^1(G) \otimes \C \to 
\gr^2(G) \otimes \C$, is equal to  $\im (\partial _G)$, 
for any finitely presented group $G$. It follows that the 
cup-product maps $\cup _{G_{\Gamma }}$ and 
$\cup _{G_{ {\tilde \Gamma}}}$ are equivalent, in the 
sense of Definition \ref{def=similar}.  Consequently, 
$\cup_{G_{\Gamma}}$ satisfies the isotropicity resonance 
obstruction if and only if $\cup_{G_{{\tilde\Gamma}}}$ 
does so.

With these remarks, the Corollary follows at once 
from Theorems \ref{thm=raserre} and \ref{thm=posobstr}.
\end{proof}

\subsection{K\"{a}hler right-angled Artin groups}
\label{subsec:kraag}

With our methods, we may easily decide which 
right-angled Artin groups are K\"{a}hler groups.  

\begin{corollary} 
\label{cor:raag kahler} 
For a right-angled Artin group $G_{\Gamma}$, 
the following are equivalent.
\begin{romenum}
\item  \label{agk1} 
The group $G_{\Gamma}$ is K\"{a}hler.
\item  \label{agk2} 
The graph  $\Gamma$ is a complete graph on an even 
number of vertices.
\item  \label{agk3}  
The group $G_{\Gamma}$ is a free abelian group of 
even rank. 
\end{romenum}
\end{corollary}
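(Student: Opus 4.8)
The plan is to establish the cycle (iii) $\Rightarrow$ (i) $\Rightarrow$ (ii) $\Rightarrow$ (iii), treating the equivalence of (ii) and (iii) as essentially a restatement. Indeed, by Example \ref{ex=artjoin} the right-angled Artin group on the complete graph $K_n$ is $\Z^n$; conversely, since a right-angled Artin group determines its defining graph (as recalled just before Corollary \ref{cor=rkm}), an isomorphism $G_{\Gamma}\cong\Z^n$ forces $\Gamma=K_n$, and the rank $n$ is even precisely when $\Gamma$ has an even number of vertices. For (iii) $\Rightarrow$ (i), I would simply note that $\Z^{2g}$ is the fundamental group of the complex torus $\C^g/\Lambda$, which is a compact K\"{a}hler manifold. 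Thus the only substantive implication is (i) $\Rightarrow$ (ii).

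To prove (i) $\Rightarrow$ (ii), the idea is to combine the explicit description \eqref{eq=resart} of $\R_1(G_{\Gamma})$ with the compact-K\"{a}hler isotropicity obstruction of Corollary \ref{cor:firstex}. Suppose $G=G_{\Gamma}=\pi_1(M)$ with $M$ compact K\"{a}hler; we may assume $\sV\ne\emptyset$, so that $b_1(G)=\abs{\sV}>0$. I would first show that $\Gamma$ must be a complete graph. If it were not, two distinct vertices would be non-adjacent, so some full subgraph $\Gamma(\sW)$ would be disconnected; choosing $\sW$ maximal with this property, \eqref{eq=resart} presents $\C^\sW$ (of dimension $\abs{\sW}\ge 2$) as a positive-dimensional irreducible component of $\R_1(G)$. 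Since $M$ is compact, $b_1(G)>0$, and $\R_1(G)\ne\{0\}$, Corollary \ref{cor:firstex}\eqref{appl1} forces every irreducible component of $\R_1(G)$---in particular $\C^\sW$---to be $1$-isotropic with respect to $\cup_G$. But by the cup-product formula \eqref{eq=cupart}, the restriction of $\cup_G$ to $\C^\sW$ is $\cup_{\Gamma(\sW)}$, whose image is the span of the edge-classes of $\Gamma(\sW)$ inside $H^2(G,\C)$, hence has dimension equal to the number of edges of $\Gamma(\sW)$. A $1$-dimensional image would therefore mean $\Gamma(\sW)$ has a single edge $\{a,b\}$; but then any further vertex $c\in\sW$---one exists, since $\Gamma(\sW)$ has at least two connected components---lies in the radical of the skew form $\cup_{\Gamma(\sW)}$, contradicting the non-degeneracy demanded in Definition \ref{def=position}\eqref{it2}. (If instead $\Gamma(\sW)$ had no edges at all, $\C^\sW$ would be merely $0$-isotropic, again contradicting Corollary \ref{cor:firstex}\eqref{appl1}.) Hence $\Gamma$ is complete, so $G\cong\Z^n$ with $n=\abs{\sV}$; and since $M$ is compact K\"{a}hler, its first Betti number $n$ is even, by Hodge theory (see \cite{ABC}). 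This yields (ii).

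The only real point of the argument is the elementary combinatorial observation that a coordinate subspace $\C^\sW$ occurring as a component of $\R_1(G_{\Gamma})$---which by construction arises from a \emph{disconnected} subgraph $\Gamma(\sW)$---can never be $1$-isotropic for $\cup_{G_{\Gamma}}$; once this is in hand, the rest is bookkeeping, apart from disposing of the degenerate cases $\sV=\emptyset$ and $\abs{\sW}\le 1$, which are handled by the standard conventions on connectedness. An alternative route would run through Theorem \ref{thm=raserre} (K\"{a}hler $\Rightarrow$ quasi-K\"{a}hler $\Rightarrow$ $\Gamma$ is complete multipartite and $G_{\Gamma}$ is a product of finitely generated free groups) and then use the same $0$- versus $1$-isotropy comparison to collapse the multipartite structure to a single complete graph; but arguing directly from \eqref{eq=resart} is shorter.
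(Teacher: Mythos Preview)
Your proof is correct. The paper takes the alternative route you sketch at the end: it first invokes Theorem~\ref{thm=raserre} (K\"{a}hler $\Rightarrow$ quasi-K\"{a}hler) to write $\Gamma=\overline{K}_{n_1}*\cdots*\overline{K}_{n_r}$ and $G_\Gamma=\bF_{n_1}\times\cdots\times\bF_{n_r}$, then applies Lemma~\ref{lem=resprod} to identify the components of $\R_1(G_\Gamma)$ with the subspaces $\R_1(\bF_{n_i})=\C^{n_i}$, and finally observes that any such component with $n_i>1$ is $0$-isotropic, contradicting Corollary~\ref{cor:firstex}\eqref{appl1}. Your direct argument from \eqref{eq=resart} bypasses both Theorem~\ref{thm=raserre} and Lemma~\ref{lem=resprod}, trading them for the elementary case analysis on the edge count of $\Gamma(\sW)$; this makes your proof more self-contained, while the paper's version is shorter once those earlier results are taken as given. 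Both ultimately rest on the same obstruction, namely that a positive-dimensional component of $\R_1$ coming from a disconnected full subgraph can never be $1$-isotropic.
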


\begin{proof}
Implications \eqref{agk2} $\Rightarrow$ \eqref{agk3} 
$\Rightarrow$ \eqref{agk1}  are clear. So suppose 
$G_{\Gamma}$ is a K\"{a}hler group. By Theorem 
\ref{thm=raserre}, $\Gamma$ is a complete multi-partite graph 
$\overline{K}_{n_1}*\cdots * \overline{K}_{n_r}$, and 
$G_\Gamma=\bF_{n_1}\times \cdots \times \bF_{n_r}$.  
By Lemma \ref{lem=resprod}, and abusing notation 
slightly, $\R_1(G_{\Gamma })=\bigcup_i \R_1(\bF_{n_i})$.
Now, if there were an index $i$ for which $n_i>1$, then 
$\R_1(\bF_{n_i})=\C^{n_i}$ would be a positive-dimensional, 
$0$-isotropic, irreducible component of $\R_1(G_{\Gamma })$, 
contradicting Corollary \ref{cor:firstex}\eqref{appl1}. 
Thus, we must have $n_1=\cdots =n_r=1$, and $\Gamma=K_r$.  
Moreover, since $G_\Gamma=\Z^r$ is a K\"{a}hler group, 
$r$ must be even. 
\end{proof}

\begin{ack}
This work is a major revision of the preprint \cite{DPS05}.  
We are grateful to an anonymous referee, who suggested 
significant improvements to \cite{DPS05}, concerning both 
the substance of the results, and the exposition. These 
suggestions are reflected especially in Sections
\ref{sec=holo} and \ref{sec=reljump} of this paper.  
We also thank a second referee for further suggestions 
on the exposition.  
\end{ack}

\quad\newline\vspace{-1.1in}
\newcommand{\arxiv}[1]
{\texttt{\href{http://arxiv.org/abs/#1}{arXiv:#1}}}

\renewcommand{\MR}[1]
{\href{http://www.ams.org/mathscinet-getitem?mr=#1}{MR#1}}

\bibliographystyle{amsplain}

\end{document}